%% file: Arxiv_version.tex
\documentclass[opre,sglanonrev]{informs4_arxiv}
\usepackage{eqndefns-left} 

\OneAndAHalfSpacedXII 

\usepackage{tikz}

\usepackage[sort&compress]{natbib}
 \bibpunct[, ]{[}{]}{,}{n}{}{,}%
 \def\bibfont{\small}%
\usepackage{tikz}
\usetikzlibrary{arrows.meta,calc,shadows.blur}
\usepackage[margin=1in]{geometry} 
\usepackage{eqndefns-left} 
\usepackage{lipsum}

\usepackage{bm}
\usepackage{endnotes}
\usepackage{makecell}
\usepackage{xparse}
\usepackage{lipsum}
\usepackage{amsfonts}
\usepackage{graphicx}
\usepackage{hyperref}
\usepackage{amstext}
\usepackage{amsfonts}
\usepackage{amssymb}
\usepackage{xcolor}
\usepackage{mathtools}
\usepackage{bm}
\usepackage{epstopdf}
\usepackage{algorithmic}
\usepackage{algorithm}
\usepackage{adjustbox}
\usepackage{amsmath}
\usepackage{float}
\usepackage{verbatim}
\usepackage{graphicx}
\usepackage{caption}
\usepackage{booktabs}
\usepackage{subcaption}
\usepackage{multirow}
\usepackage{paralist}
\usepackage{enumitem}
\usepackage{amstext}
\usepackage{amsfonts}
\usepackage{amssymb}

\usepackage{wrapfig}

\usepackage[textwidth=2cm]{todonotes}

\usepackage[dvipsnames]{xcolor}
\usepackage{bbm}
\usepackage{thmtools}
\usepackage{thm-restate}
\usepackage{titlesec}
\usepackage{amsopn}
\usepackage{tikz}
\usetikzlibrary{matrix}
\usetikzlibrary{positioning,fit,backgrounds}
\usepackage{wrapfig} 
\usepackage{listings}  
\usepackage{xcolor}   
\usepackage{natbib}
 \bibpunct[, ]{(}{)}{,}{a}{}{,}%
 \def\bibfont{\small}%
\usepackage{setspace}



\definecolor{codeblue}{rgb}{0.0, 0.0, 0.5} 
\definecolor{codekw}{rgb}{0.0, 0.0, 0.5} 

\lstdefinestyle{Pytorch}{
    language         = Python,
    backgroundcolor  = \color{white},
    basicstyle       = \fontsize{8.0pt}{9pt}\selectfont\ttfamily\bfseries,
    columns          = fullflexible,
    breaklines       = true,
    captionpos       = b,
    commentstyle     = \fontsize{4pt}{4pt}\color{codeblue},
    keywordstyle     = \fontsize{4pt}{4pt}\color{codekw},
    morekeywords     = {with,scatter_,norm,sort},
}

\definecolor{Gray}{gray}{0.95}

\newcommand{\R}{\mathbb{R}}

\newcommand{\textrnu}[1]{\textup{\uppercase\expandafter{\romannumeral#1}}}
\newcommand{\textrnl}[1]{\textup{\expandafter{\romannumeral#1}}}

\newcommand{\defeq}{\stackrel{\mathrm{def}}{=}}

\EquationsNumberedThrough    

\TheoremsNumberedThrough     
\ECRepeatTheorems  %


\begin{document}


\RUNAUTHOR{Meng, G\'omez and Mazumder}

\RUNTITLE{Computation of LTS: BnB and Hyperplane Arrangements}

\TITLE{Computation of Least Trimmed Squares: A Branch-and-Bound framework with Hyperplane Arrangement Enhancements}

\ARTICLEAUTHORS{%
\AUTHOR{Xiang Meng}
\AFF{MIT Operations Research Center, \EMAIL{mengx@mit.edu}}

\AUTHOR{Andr\'es G\'omez}
\AFF{USC Daniel J. Epstein Department of Industrial and Systems Engineering, \EMAIL{gomezand@usc.edu}}

\AUTHOR{Rahul Mazumder}
\AFF{MIT Sloan School of Management, Operations Research Center, \EMAIL{rahulmaz@mit.edu}}
} 

\ABSTRACT{%
We study computational aspects of a key problem in robust statistics --- the penalized least trimmed squares (LTS) regression problem, a robust estimator that mitigates the influence of outliers in data by capping residuals with large magnitudes. Although statistically attractive, penalized LTS is NP-hard, and existing mixed-integer optimization (MIO) formulations scale poorly due to weak relaxations and exponential worst-case complexity in the number of observations.
We propose a new MIO formulation that embeds hyperplane arrangement logic into a perspective reformulation, explicitly enforcing structural properties of optimal solutions. We show that, if the number of features is fixed, the resulting branch-and-bound tree is of polynomial size in the sample size.
Moreover, we develop a tailored branch-and-bound algorithm that uses first-order methods with dual bounds to solve node relaxations efficiently. Computational experiments on synthetic and real datasets demonstrate substantial improvements over existing MIO approaches: on synthetic instances with $5000$ samples and $20$ features, our tailored solver reaches a $1\%$ gap in 1 minute while competing approaches fail to do so within one hour. These gains enable exact robust regression at significantly larger sample sizes in low-dimensional settings.

}%




\KEYWORDS{Robust statistics, mixed-integer nonlinear optimization, branch-and-bound, hyperplane arrangements} 

\maketitle

\input{Sections/Introduction}
\input{Sections/Mathemetical_Formulation}

\input{Sections/Optimization}

\input{Sections/Experiments}



%
%
%
\bibliographystyle{informs2014} 
\bibliography{references}

\begin{APPENDICES}
\input{Sections/appendix}

\end{APPENDICES}



\end{document}

%% file: Sections/Introduction.tex
\section{Introduction}

Outliers---informally speaking, observations that deviate substantially from the bulk of the data---are pervasive in modern datasets. They arise from sensor failures, human entry mistakes, transient process instabilities, or simply corruptions in data. 
For example, in regression problems, contaminated observations or outliers can severely affect the quality of statistical estimates compared to estimates obtained from clean data.
Classical least squares estimation is notoriously sensitive to outliers: a single high-leverage observation can arbitrarily distort the regression coefficients, potentially hurting model prediction and/or statistical inference~\citep{huber1981,maronna2006}. Ridge regression (i.e., least squares penalized with a squared $\ell_2$-norm penalty) --- despite its regularization benefits for ill-conditioned problems, inherits this sensitivity since the squared loss grows without bounds with the residual magnitude. To address this limitation, several robust estimators have been studied in the statistics literature~\citep{rousseeuwleroy1987}.

Given a model matrix $\bm{X} \in \mathbb{R}^{n \times p}$ and response vector $\bm{y} \in \mathbb{R}^n$, consider the optimization problem
\begin{equation}\label{eq:LTS}
\min_{\bm{\beta} \in \mathbb{R}^p, \bm{z} \in \{0,1\}^n} \; \frac{1}{2} \sum_{i=1}^n \left( y_i - \bm{x}_i^\top \bm{\beta} \right)^2 (1 - z_i) + \frac{\lambda}{2} \|\bm{\beta}\|_2^2 + \mu \sum_{i=1}^n z_i,
\end{equation}
where $\bm{x}_i^\top$ denotes the $i$-th row of $\bm{X}$, $\bm{\beta}\in\mathbb{R}^p$ denotes the regression coefficients and $r_i:=y_i-\bm{x}_i^\top \bm{\beta}$, $i \in [n]$ are the residuals. We are interested in the traditional setting of robust statistics where the number of observations is larger than the dimension $p$, that is $n > p$. Binary variable $z_i \in \{0,1\}$ indicates whether observation $i$ is an outlier---the weight $(1-z_i)$ serves to exclude large absolute values of the residual $r_{i}$ (for $i \in [n]$) and include all other residuals in the computation of the regression coefficients. The excluded points correspond to outliers, and the remaining points are inliers. The parameter $\lambda \geq 0$ controls ridge-regularization, while $\mu \geq 0$ is a parameter that penalizes the number of outliers. 
Problem \eqref{eq:LTS} is a penalized variant of the Least Trimmed Squares (LTS) estimator introduced by \cite{rousseeuw1984}, which fits a model by minimizing the sum of the smallest squared residuals. The cardinality-constrained LTS places a constraint on the number of trimmed observations, while \eqref{eq:LTS} represents its 
penalized form 
that allows the trimming level to be implicitly determined by the penalty $\mu$. Compared to other robust estimators such as Least Median of Squares (LMS)\citep{rousseeuw1984}, 
LTS has desirable statistical properties --- it enjoys 
$\sqrt{n}$-consistency, and higher asymptotic efficiency~\citep{rousseeuwleroy1987,rousseeuwvandriessen2006}.

The estimator in problem \eqref{eq:LTS} can also be interpreted through the lens of a robust loss functions. Indeed, upon minimizing Problem~\eqref{eq:LTS} over the binary indicators $\bm{z}$, the problem reduces to:
\begin{equation}\label{eq:capped_ls}
\min_{\bm{\beta}\in\mathbb{R}^p}\; \sum_{i=1}^n \phi_{\text{cap}}\!\left(y_i-\bm{x}_i^\top\bm\beta\right) + \frac{\lambda}{2}\|\bm\beta\|_2^2,~~~\text{where}~~
\phi_{\text{cap}}(r)\defeq \min\Bigl\{\frac{1}{2}r^2,\;\mu\Bigr\}.
\end{equation}\begin{wrapfigure}{r}{0.42\textwidth}
    \vspace{-1.2em}
    \centering
    \includegraphics[width=0.4\textwidth]{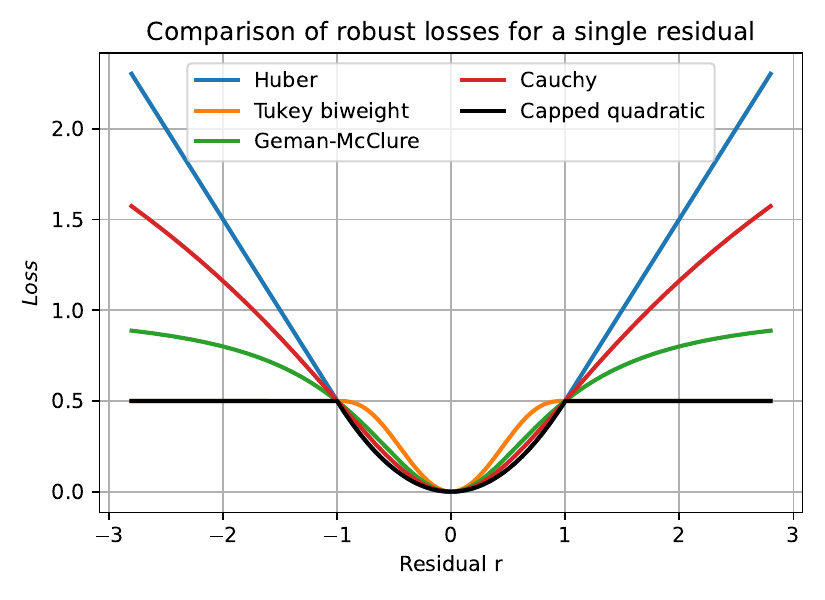}
    \caption{Various robust loss functions.}
    \label{fig:robust_loss}
    \vspace{-1em}
\end{wrapfigure}
The objective has a capped quadratic loss $\phi_{\text{cap}}(r)$ with an additional ridge regularization with penalty parameter $\lambda/2 \geq0$. The loss $r \mapsto \phi_{\text{cap}}(r)$ is quadratic for small (absolute) residuals but saturates at the constant level $\mu$ for large (absolute) residuals, discouraging grossly contaminated observations from dominating the fit. The nonconvex capped quadratic belongs to a broader class of robust losses that temper growth in the tails, as illustrated in Figure~\ref{fig:robust_loss}, and has close ties to the redescending M-estimators~\citep{maronna2006}. A canonical example of a convex loss function is the Huber loss, which transitions from quadratic to linear growth and is a standard baseline in robust regression \citep{huber1964,huber1981}. Well known bounded, nonconvex choices of the loss function include Tukey's biweight \citep{holland1977}, Geman--McClure \citep{chang2012restore}, and Cauchy/Lorentzian losses \citep{motulsky2006}. Relative to these alternatives, the objective in~\eqref{eq:capped_ls} offers a transparent interpretation: observations with sufficiently large residuals contribute a constant effect $\mu$ and are effectively trimmed from the fit.

Despite its attractive properties, solving problem \eqref{eq:LTS} poses a formidable challenge. The problem is NP-hard \citep{bernholt2006}, and early approaches relied on heuristics such as such as the feasible solution algorithm \citep{hawkins1994} and its subsequent refinement \citep{hawkinsolive1999}, or the widely used FAST-LTS
algorithm \citep{rousseeuwvandriessen2006}. 
While efficient, these methods provide no optimality guarantees and can produce solutions that are far away from the optimum~\citep{gomez2025outlier}. In addition, the desirable statistical properties of the LTS estimator 
are predicated on obtaining a global minimizer of \eqref{eq:LTS}, and a suboptimal solution need not inherit any of these guarantees. 

Problem \eqref{eq:LTS} can be formulated as a mixed-integer quadratic program, enabling globally optimal solutions via modern mixed-integer optimization (MIO) solvers \citep{sun2021adaptivecappedls,zioutas2005,gomez2025outlier}. However, these methods have a worst case complexity of $\mathcal{O}(2^n)$. While MIO methods often exhibit practical runtimes that are substantially better than the theoretical worst-case complexities; unfortunately, for the case of \eqref{eq:LTS}, convex relaxations are typically weak and branch-and-bound algorithms require a prohibitive number of nodes to demonstrate optimality. It appears that existing MIO methods for LTS can deliver optimal solutions to problem instances with $n\lesssim 100$ and $p\lesssim 10$ (practical performance is also highly dependent on hyperparameters $\lambda$ and $\mu$ and the dataset itself).

Outside of the MIO literature, some exact combinatorial algorithms have been proposed. \cite{agullo2001} proposed both a probabilistic exchange algorithm and an branch-and-bound algorithm for solving LTS. \cite{hofmann2010}
designed a row adding algorithm that computes exact LTS solutions for a range of coverage values, and \cite{klouda2015} studied an exact algorithm for solving LTS whose computational cost scales as $O(n^{p+1})$. These exact combinatorial methods are generally limited to relatively small or low-dimensional problem instances. 

In a different line of work, exact methods based on topological sweeps for solving LTS have been proposed in the statistics literature. These methods inspired by computational geometry 
are based on an arrangement of hyperplanes, and require solving $\mathcal{O}(n^p)$ least squares problems. While  topological sweep methods can in principle scale to a large number of points provided that $p$ is small, we are unaware of practical implementations apart from the special case of $p=2$ \citep[e.g.,][]{edelsbrunner1990computing,hossjer1995exact}. 
However, as far as we can tell, there are no available practical implementations for the topological sweep method for $p\geq 3$.


\subsection*{Contributions} In this paper we propose new MIO formulations and algorithms for Problem \eqref{eq:LTS}, designed with the common regime of $n\gg p$ in mind. The proposed formulations build on state-of-the-art perspective reformulations for Problem~\eqref{eq:LTS} with strong convex relaxations \citep{gomez2025outlier}, but additionally include additional constraints enforcing hyperplane arrangements logic. The proposed formulation avoids big-M constraints altogether, and branch-and-bound methods based on the proposed formulation terminate after exploring at most $\mathcal{O}(n^{p+1})$ nodes. Unlike topological sweep methods, the practical runtime of branch-and-bound algorithms appear to be substantially better than the worst-case performance. In fact, we propose the first practical implementation of methods based on hyperplane arrangement for Problem~\eqref{eq:LTS} that can handle instances with $p\geq 3$. The proposed formulation leads to at least an order-of-magnitude improvement over alternative state-of-the-art MIO approaches \citep{gomez2025outlier,insolia2022simultaneous}, and the approach of~\cite{bertsimas2014least} for the LMS problem. 

In addition, we also develop a tailored branch-and-bound solver to tackle the proposed formulation. Our algorithm is developed in Python, uses first-order methods to solve the continuous subproblems and requires no commercial software. Our code builds upon and significantly extends the earlier work of~\cite{hazimeh2021sparse} proposing tailored branch-and-bound approaches for a different problem sparse linear models (See Section~\ref{sec:BnBSolver} for differences with this work). The computational performance of our algorithm is competitive with state-of-the-art commercial solver Gurobi to solve the proposed formulation, and appears substantially faster than using other commercial MIO solvers.
On synthetic datasets, our method scales to significantly larger sample sizes: on instances with $n=5000$ samples and $p=20$ features, it reaches a $1\%$ optimality gap in 1 minute, whereas no competing approach does so within a one-hour time limit. On 13 real benchmark regression datasets, our method and Gurobi applied to the proposed formulation achieve the strongest performance, each attaining the best runtime on roughly half of the instances; the remaining baselines are substantially slower or fail to close the gap within the time limit. 


\subsection*{Outline} The rest of the paper is organized as follows. In Section~\ref{sec:review} we review existing methods for \eqref{eq:LTS} (MIO and topological sweep methods as well as heuristics). In Section~\ref{sec:MIOformulation} we describe the proposed MIO formulation, study its strength and prove its worst-case complexity of $\mathcal{O}(n^{p+1})$. In Section~\ref{sec:BnBSolver} we describe the proposed branch-and-bound algorithm based on first-order methods. Finally, in Section~\ref{sec:experiments} we provide computational experiments with synthetic and real data. 

%% file: Sections/Mathemetical_Formulation.tex
\section{Review of existing solution approaches}
\label{sec:review}
We review exact solution approaches for Problem~\eqref{eq:LTS} from the literature. First, we discuss exact MIO approaches that have recently been proposed in the Operations Research community. We then discuss classical exact solution approaches based on hyperplane arrangements from the statistics literature. Finally, we examine approximate methods (also known as heuristics) for finding good solutions, which are currently the most widely used approach in practice.

\subsection{MIO formulations}\label{sec:mip}

At a high level, MIO approaches are based on the branch-and-bound method. The natural approach is to branch on the binary variables $\bm{z}$, that is, branching on whether a data point should be deemed an outlier or not. Earlier approaches from the mathematical optimization community \citep{agullo2001new,giloni2002least} studied formulation \eqref{eq:LTS} directly. However, nonconvexity of the continuous relaxation of Problem~\eqref{eq:LTS} due to the presence of cubic terms in the objective makes the design of efficient solution methods challenging. The first exact MIO approaches were proposed by \citet{zioutas2005deleting} and \cite{zioutas2009quadratic}, which reformulated \eqref{eq:LTS} into mixed-integer quadratically constrained quadratic programs by introducing additional variables representing the loss incurred from fitting each point. More recently, \citet{insolia2022simultaneous} proposed an alternative mixed-integer quadratic optimization reformulation by introducing variables representing  ``corrections" associated with each datapoint, which we review in \S\ref{sec:bigM}. Finally, stronger formulations were proposed in the literature, see \cite{gomez2021outlier} for methods specific to time-series data and \cite{gomez2025outlier} for approaches for the general least trimmed squares problem \eqref{eq:LTS}, which we review in Section~\ref{sec:persp}.

In general, MIO methods have a theoretical worst case complexity of $\mathcal{O}(2^n)$, corresponding to all possible combinations of inliers and outliers. Branch-and-bound methods typically explore substantially fewer branch-and-bound nodes (especially if strong relaxations are used). Nonetheless, both the theoretical complexity and computational experiments reported in the aforementioned papers suggest that the performance of existing MIO methods deteriorates substantially as the sample size $n$ increases. Current MIO approaches have been shown to perform well with real data for $n\approx 100$, but struggle to solve to optimality instances with larger values of $n$.

\subsubsection{Big-$M$ Formulation}\label{sec:bigM}

We first consider a MIO formulation of Problem~\eqref{eq:LTS} similar to the one proposed by \cite{insolia2022simultaneous}, incorporating Big-$M$ constraints. We introduce auxiliary variables $\bm{w} \in \mathbb{R}^n$ representing corrections to the response vector, where $w_i$ captures the residual when observation $i$ is treated as an outlier---that is, $w_i=y_i-\bm{x_i^\top\beta}$. We define a constant $M > 0$ such that any optimal correction satisfies $|w_i^*| \leq M$. Binary variables $z_i, i \in [n]$, indicate whether observation $i$ is an outlier, enforced through the constraints $-M z_i \leq w_i \leq M z_i, \, i \in [n]$. This results in the following Big-$M$ formulation:
\begin{equation}\label{eq:LTS_BigM}
\begin{aligned}
\min_{\bm{\beta}, \bm{w}, \bm{z}} \; & \frac{1}{2} \sum_{i=1}^n \left( y_i - w_i-\bm{x}_i^\top \bm{\beta} \right)^2 + \frac{\lambda}{2} \|\bm{\beta}\|_2^2 + \mu \sum_{i=1}^n z_i \\
\text{s.t.~~} \; & -M z_i \leq w_i \leq M z_i, \quad i \in [n], \\
& \bm{\beta} \in \mathbb{R}^p, \; \bm{w} \in \mathbb{R}^n, \; z_i \in \{0,1\}, \quad i \in [n].
\end{aligned}
\end{equation}
In the above formulation, if $z_i=0$, then the big-M constraints force $w_i=0$; in that case, the loss associated with datapoint $i$ reduces to the standard quadratic loss. On the other hand, if $z_i=1$, then $w_i$ is free to take any value; in particular,  for a fixed $\beta$, setting $w_i=y_i-\bm{x_i^\top\beta}$ is optimal, and the associated loss with datapoint $i$ vanishes.

There are two concerns in using formulation \eqref{eq:LTS_BigM}. Firstly, we need to specify the value of the maximum magnitude $M$, which can be large for arbitrary outliers. This is different from formulation~\eqref{eq:LTS}, which doesn't need such a  specification. If we select $M$ to be sufficiently large so that a solution to Problem~\eqref{eq:LTS_BigM} is also a solution to Problem~\eqref{eq:LTS} \citep[e.g., using arguments similar to][]{bertsimas2016best}, the value of $M$ can be very conservative, possibly causing numerical issues for an algorithm. Additionally, convex relaxations of \eqref{eq:LTS_BigM} are weak since solution $\bm{\beta=0}, \bm{w}=\bm{y}$, $\bm{z}=\epsilon\bm{1}$ are feasible provided $\epsilon\geq \|\bm{y}\|_\infty/M$, with objective values close to $0$ for any reasonable value of $M$. As a consequence, branch-and-bound algorithms struggle to prune nodes and make informed branching decisions, leading to prohibitive solution times. 


\subsubsection{Perspective Formulation}\label{sec:persp}

\citet{gomez2025outlier} propose stronger, big-M free formulations of Problem~\eqref{eq:LTS}, obtained by applying the perspective reformulation \citep{frangioni2006perspective, gunluk2010perspective}.

The key observation is that the objective of Problem~\eqref{eq:LTS_BigM} can be written as
\begin{equation*}
\frac{1}{2} \|\bm{y} - \bm{w} - \bm{X\beta}\|_2^2 + \frac{\lambda}{2} \|\bm{\beta}\|_2^2 = \frac{1}{2}\|\bm{y}\|_2^2 - \bm{y}^\top (\bm{X\beta} - \bm{w}) + \frac{1}{2} \begin{pmatrix} \bm{\beta}^\top & \bm{w}^\top \end{pmatrix} \bm{\Sigma} \begin{pmatrix} \bm{\beta} \\ \bm{w} \end{pmatrix},
\end{equation*}
where $\bm{\Sigma} = \begin{pmatrix} \bm{X}^\top \bm{X} + \lambda \bm{I} & -\bm{X}^\top \\ -\bm{X} & \bm{I} \end{pmatrix}$. Given $\bm{d}\in \mathbb{R}_+^n$, define $$\widetilde{\bm{\Sigma}}_{\bm{d}}= \begin{pmatrix} \bm{X}^\top \bm{X} + \lambda \bm{I} & -\bm{X}^\top \\ -\bm{X} & \bm{I} - 2\text{Diag}(\bm{d}) \end{pmatrix}$$ and note that the following quadratic term can be decomposed as
\begin{equation}\label{eq:quad_decomp}
\frac{1}{2} \begin{pmatrix} \bm{\beta}^\top & \bm{w}^\top \end{pmatrix} \bm{\Sigma} \begin{pmatrix} \bm{\beta} \\ \bm{w} \end{pmatrix} = \frac{1}{2} \begin{pmatrix} \bm{\beta}^\top & \bm{w}^\top \end{pmatrix}  \widetilde{\bm{\Sigma}}_{\bm{d}}\begin{pmatrix} \bm{\beta} \\ \bm{w} \end{pmatrix} + \sum_{i=1}^n d_i w_i^2,
\end{equation}
where the first term remains convex provided that matrix $\widetilde{\bm{\Sigma}}_{\bm{d}}$ is positive semi-definite. 
The perspective reformulation then replaces the separable term $\sum_i d_i w_i^2$ with $\sum_i d_i w_i^2 / z_i$. The latter term equals $d_i w_i^2$ when $z_i = 1$ and enforces $w_i = 0$ when $z_i = 0$, using the convention that $0/0 = 0$ and $x/0 = \infty$ for $x > 0$. We thus obtain the perspective formulation
\begin{equation}\label{eq:LTS_persp}
\begin{aligned}
\min_{\bm{\beta}, \bm{w}, \bm{z}} \; & \frac{1}{2}\|\bm{y}\|_2^2 - \bm{y}^\top (\bm{X\beta} - \bm{w}) + \frac{1}{2} \begin{pmatrix} \bm{\beta}^\top & \bm{w}^\top \end{pmatrix} \widetilde{\bm{\Sigma}}_{\bm{d}} \begin{pmatrix} \bm{\beta} \\ \bm{w} \end{pmatrix} + \sum_{i=1}^n \left( \mu z_i + \frac{d_i w_i^2}{z_i} \right) \\
\text{s.t.~~} \; & -M z_i \leq w_i \leq M z_i, \quad i \in [n], \\
& \bm{\beta} \in \mathbb{R}^p, \; \bm{w} \in \mathbb{R}^n, \; z_i \in \{0,1\}, \quad i \in [n].
\end{aligned}
\end{equation}
Observe that if $\bm{d}>\bm{0}$, then the big-M constraints $-M\bm{z}\leq \bm{w}\leq M\bm{z}$ can be removed from the formulation. The perspective formulation \eqref{eq:LTS_persp} is equivalent to \eqref{eq:LTS_BigM} but yields tighter continuous relaxations, leading to smaller branch-and-bound trees and faster solve times. 

To find a vector $\bm{d}$ that results in the best convex relaxation of Problem~\eqref{eq:LTS_persp}, \citet{gomez2025outlier} propose an algorithm that solves a sequence of positive semi-definite programs (SDPs) with cones of order $p+1$. We now describe an alternative and simpler approach we use in our numerical experiments. First, let $\tilde\lambda\in (0,\lambda)$ be a small number used to guarantee strong convexity and numerical stability. We note that, by the Schur complement, we can guarantee that $\widetilde{\bm{\Sigma}}_{\bm{d}}\succ \bm{0}$ holds by ensuring that $\bm{I} - \bm{X}(\bm{X}^\top \bm{X} + (\lambda-\tilde\lambda) \bm{I})^{-1} \bm{X}^\top - 2\text{Diag}(\bm{d}) \succeq \bm{0}$. Thus, setting \begin{equation}\label{eq:minEigenvalue}d_j=\frac{1}{2}\gamma_{\min}\left(\bm{I} - \bm{X}(\bm{X}^\top \bm{X} + (\lambda-\tilde\lambda) \bm{I})^{-1} \bm{X}^\top\right)\quad \forall j\in [n],\end{equation}
where $\gamma_{\min}(\cdot)$ denotes the minimum eigenvalue of the argument matrix, guarantees that $\widetilde{\bm{\Sigma}}_{\bm{d}}\succ \bm{0}$. While choice \eqref{eq:minEigenvalue} results in a weaker relaxation than using the method proposed by \citet{gomez2025outlier}, this can be computed faster and does not require access to (commercial) conic optimization solvers or specialized algorithms.

\subsection{Solution via hyperplane arrangements}\label{subsect:hyper}

Different from MIO approaches, exact algorithms for \eqref{eq:LTS} were proposed in the statistics literature~\citep{rousseeuw2003robust} based on hyperplane arrangements. The algorithms involved solving $\mathcal{O}(n^p)$ least squares regression problems. Thus, unlike MIO methods, the complexity is polynomial in the number of datapoints $n$ but exponential in the number of features $p$. As far as we can tell, these algorithms are
difficult to implement in practice. We now review methods based on hyperplane arrangements. We note that the method we describe differs from classical algorithms for the least trimmed squares problems, since we describe a method for the penalized version whereas classical approaches tackle cardinality constrained problems with an upper bound on the number of outliers. Nonetheless, the ideas behind the algorithms are similar.

The hyperplane arrangement algorithm relies on two key results: the first (Proposition~\ref{prop:optimal}) concerns structure of optimal solutions of \eqref{eq:LTS}, and the second (Proposition~\ref{prop:hyperplane}) is a classical result from combinatorial geometry \citep{cover1965geometrical,zaslavsky1975facing}.

\begin{proposition}\label{prop:optimal}
Any optimal solution $(\bm{\beta^*},\bm{z^*})$ of Problem~\eqref{eq:LTS} satisfies:
\begin{enumerate}
\item if $z_i^*=0$, then $|y_i-\bm{x_i^\top\beta^*}|\leq \sqrt{2\mu}$;
\item if $z_i^*=1$, then $|y_i-\bm{x_i^\top\beta^*}|\geq \sqrt{2\mu}$.
\end{enumerate}
\end{proposition}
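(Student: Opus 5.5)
The plan is a one-coordinate exchange argument: fix $\bm{\beta^*}$ and note that the objective of Problem~\eqref{eq:LTS} is separable in $\bm{z}$. Write $r_i^*=y_i-\bm{x_i^\top\beta^*}$. With $\bm{\beta}$ frozen at $\bm{\beta^*}$, the objective equals
\[
\frac{\lambda}{2}\|\bm{\beta^*}\|_2^2+\sum_{i=1}^n g_i(z_i),\qquad g_i(z_i)=\tfrac12 (r_i^*)^2(1-z_i)+\mu z_i .
\]
Since $(\bm{\beta^*},\bm{z^*})$ is globally optimal, $\bm{z^*}$ must minimize this expression over $\bm{z}\in\{0,1\}^n$ with $\bm{\beta}=\bm{\beta^*}$ held fixed. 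Otherwise we could change $\bm{z}$ and strictly decrease the objective, contradicting optimality. By separability, each $z_i^*$ must then minimize $g_i$ over $\{0,1\}$ individually.

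The key step is comparing the two values $g_i(0)=\tfrac12 (r_i^*)^2$ and $g_i(1)=\mu$.

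If $z_i^*=0$, then optimality requires $g_i(0)\le g_i(1)$. Otherwise flipping only $z_i$ to $1$ gives a feasible point with strictly smaller objective. This yields $\tfrac12 (r_i^*)^2\le\mu$, i.e., $|r_i^*|\le\sqrt{2\mu}$, which is item 1.

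If $z_i^*=1$, the symmetric argument gives $\mu\le\tfrac12 (r_i^*)^2$, i.e., $|r_i^*|\ge\sqrt{2\mu}$, which is item 2.

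No step is genuinely hard. The only point needing care is that the inequalities are non-strict. When $|r_i^*|=\sqrt{2\mu}$, both choices of $z_i$ give equal cost, so either can occur at an optimum. This is exactly why both items use non-strict inequalities, and the proof should not claim strict ones. Equivalently, the argument says $z_i^*$ attains $\phi_{\text{cap}}(r_i^*)=\min\{\tfrac12 (r_i^*)^2,\mu\}$, in line with the reduction to \eqref{eq:capped_ls}.
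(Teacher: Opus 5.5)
Your proposal is correct and follows essentially the same reasoning as the paper. The paper states this proposition without a formal proof. It relies on exactly your observation: with $\bm{\beta}$ fixed, the objective is separable in $\bm{z}$, and each $z_i$ must pick the smaller of $\frac12 r_i^2$ and $\mu$. This is the logic behind the reduction to \eqref{eq:capped_ls} and the update rule \eqref{eq:z_update}.
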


\begin{proposition}\label{prop:spaceDivision}\label{prop:hyperplane}
An arrangement of $n$ hyperplanes in $\R^p$ passing
through the origin divide the space into at most
$2\sum_{i=0}^{p-1}{n\choose i}=\mathcal{O}(n^p)$ regions, where the upper bound is attained if the hyperplanes are in general position.
\end{proposition}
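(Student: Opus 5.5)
The plan is to prove the bound through the deletion--restriction recursion for central arrangements. Let $R(n,p)$ be the maximum number of regions (connected components of the complement) cut out of $\R^p$ by $n$ hyperplanes through the origin. Fix such an arrangement $H_1,\dots,H_n$ and delete $H_n$. The remaining $n-1$ hyperplanes give some number of regions. Adding $H_n$ back splits a region $C$ into two exactly when $H_n$ meets the interior of $C$, and leaves it intact otherwise. The regions of the old arrangement met by $H_n$ correspond bijectively to the regions of the induced arrangement $\{H_i\cap H_n\}_{i<n}$ inside $H_n\cong\R^{p-1}$. That induced arrangement is again central and has at most $n-1$ distinct hyperplanes. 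Hence
\[
R(n,p)\le R(n-1,p)+R(n-1,p-1),
\]
with base cases $R(n,1)=2$ for $n\ge1$ (a single point splits the line), $R(0,p)=1$, and $R(1,p)=2$.

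Next I solve the recursion. I show by induction on $n+p$ that $R(n,p)\le C(n,p):=2\sum_{i=0}^{p-1}\binom{n-1}{i}$ for $n\ge1$. Pascal's rule gives $C(n-1,p)+C(n-1,p-1)=C(n,p)$, and the base cases match. Since $\binom{n-1}{i}\le\binom{n}{i}$, this yields the stated bound $2\sum_{i=0}^{p-1}\binom{n}{i}$. Because $\sum_{i<p}\binom{n}{i}\le p\,n^{p-1}$, the bound is $\mathcal{O}(n^p)$ for fixed $p$, and in fact $\mathcal{O}(n^{p-1})$.

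For the attainment clause, I read it as saying that the maximum number of regions over all arrangements of $n$ central hyperplanes is achieved by arrangements in general position. I prove this by showing that every inequality in the recursion is an equality when the normals $a_1,\dots,a_n$ are in general position, meaning any $\min(n,p)$ of them are linearly independent. In that case the restricted arrangement inside $H_n$ consists of $n-1$ distinct hyperplanes, and it is again in general position in $\R^{p-1}$: projecting the normals $a_i$ onto $H_n$ preserves independence of any $p-1$ of them together with $a_n$. By induction the count then equals $C(n,p)$ exactly. Any other arrangement can only lose regions, either through coinciding restrictions or through a degenerate lower-dimensional count. So general position attains the maximum $C(n,p)$.

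I should flag that this extremal value is Cover's count $2\sum_{i<p}\binom{n-1}{i}$, which is in general strictly smaller than the displayed $2\sum_{i<p}\binom{n}{i}$; for example, $n=p=2$ gives $4<6$. The displayed expression is therefore a valid upper bound but not literally attained, and the attainment clause holds only in the sense of the maximum over arrangements. For the paper's purposes only the upper bound and the $\mathcal{O}(n^p)$ growth are needed.

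I expect the main obstacle to be the region-splitting bijection in the first step: justifying that $H_n$ meets the interior of an old region exactly when that region contributes a region of the restricted arrangement. I would handle this by convexity: each old region is an open polyhedral cone, so its intersection with $H_n$, when nonempty, is a single convex open region of the restriction, and $H_n$ cuts the cone into exactly two pieces.
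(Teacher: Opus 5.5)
Your proof is correct. The paper gives no argument of its own here; it only cites Cover (1965) and Zaslavsky (1975). Your deletion--restriction recursion $R(n,p)\le R(n-1,p)+R(n-1,p-1)$, closed by Pascal's rule, is the classical argument behind Cover's function-counting theorem. Zaslavsky's count comes from the characteristic polynomial, which is itself computed by deletion--restriction, so your route is essentially the standard one behind the citation.

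Your flag about the formula is also right. The general-position count is $2\sum_{i=0}^{p-1}\binom{n-1}{i}$. The displayed $2\sum_{i=0}^{p-1}\binom{n}{i}$ exceeds it by $2\sum_{i=1}^{p-1}\binom{n-1}{i-1}>0$ whenever $p\ge 2$ (e.g.\ $4<6$ for $n=p=2$). It is therefore a valid upper bound that is never attained, and the attainment clause holds only for the corrected expression.

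One small slip in your equality step: for $p=2$ the restriction to $H_n\cong\R^1$ is the single point $\{0\}$, not $n-1$ distinct hyperplanes. This is harmless, because any nonempty arrangement in $\R^1$ has exactly $2=C(n-1,1)$ regions, so the recursion is still an equality there.
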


We can use Proposition~\ref{prop:hyperplane} to solve Problem~\eqref{eq:LTS} as follows. Each datapoint $(\bm{x_i},y_i)$ induces three regions in $\mathbb{R}^p$: 
\begin{align*}
B^\leq_i=&\left\{\bm{\beta}\in \mathbb{R}^p: \bm{x_i^\top \beta}\leq y_i-\sqrt{2\mu}
\right\}\\
B^=_i=&\left\{\bm{\beta}\in \mathbb{R}^p: y_i-\sqrt{2\mu}\leq \bm{x_i^\top \beta}\leq y_i+\sqrt{2\mu}
\right\}\\
B^\geq_i=&\left\{\bm{\beta}\in \mathbb{R}^p: \bm{x_i^\top \beta}\geq y_i+\sqrt{2\mu}
\right\}.
\end{align*}
Note that for any solution $\bm{\beta}$ in the band defined by $B^=_i$, setting $z_i=0$ is a better choice than $z_i=1$ in Problem~\eqref{eq:LTS}---that is, point $i$ is an inlier. Similarly, for any $\bm{\beta}\in B^\leq_i\cup B^{\geq}_i$ we have that $z_i=1$ is a better choice in Problem~\eqref{eq:LTS}, thus point $i$ would be categorized as an outlier. The three regions are defined by two hyperplanes, $\bm{x_i^\top \beta}=y_i-\sqrt{2\mu}$ and $\bm{x_i^\top \beta}=y_i+\sqrt{2\mu}$ corresponding to data point $i \in [n]$. Thus, combining all $2n$ hyperplanes creates a division of $\mathbb{R}^p$ into $\mathcal{O}((2n)^p)$ regions, each corresponding to a selection of outliers. Hence an optimal solution to Problem \eqref{eq:LTS} can be obtained by solving a ridge regularized least squares problem for each one of the regions (with a fixed selection of outliers) and choosing the solution that yields the best objective value.

\begin{example}\label{ex:3points}
Consider three points in $\R^2$: $(x_1,y_1)=(-1,8)$, $(x_2,y_2)=(0,0.7)$ and $(x_3,y_3)=(1,1)$. Figure~\ref{fig:planeArrangementExample} (left) shows the three points in the plane as well as the optimal ridge regularized LTS regression line $y=\beta_0+\beta_1x$, obtained by solving \eqref{eq:LTS} with $\lambda=\mu=1$ -- note that the first point is flagged as an outlier in the optimal solution $(\beta_0^*,\beta_1^*)=(0.8,0.1)$. Figure~\ref{fig:planeArrangementExample} (right) shows the associated hyperplane arrangement in the $\bm{\beta}$ space: each point $i$ is associated with a band $B_i^=$, and for any solution $(\beta_0,\beta_1)\in B_i^=$ the best choice is to keep point $i$ as an inlier; point $i$ is discarded as an outlier for choices $(\beta_0,\beta_1)\not\in B_i^=$. Since the three bands do not intersect, we can conclude that $\bm{z}=\bm{0}$ cannot be optimal, thus the solution with all three points as inliers should not be considered. \hfill $\blacksquare$

\begin{figure}[!h]
    \centering
    \includegraphics[width=0.48\linewidth, trim={11cm 6cm 10.5cm 5.8cm},clip]{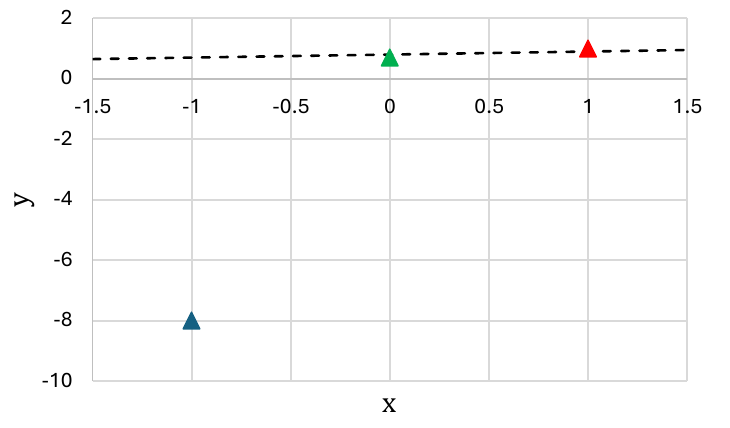}~\includegraphics[width=0.48\linewidth, trim={11cm 6cm 10.5cm 5.8cm},clip]{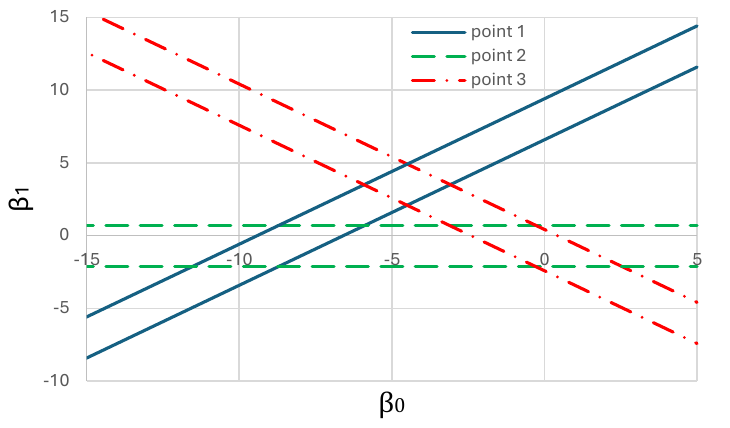}
    \caption{Illustration of Example~\ref{ex:3points}: an ridge regularized LTS regression in the plane (left),  and associated hyperplane arrangement (right). }
    \label{fig:planeArrangementExample}
\end{figure}

\end{example}

Note that for the most part, the algorithmic idea of traversing all regions while solving least squares problems is is computationally expensive and hence may be impractical. Efficient implementations do exist if $p=2$ \citep[e.g.,][]{hossjer1995exact,edelsbrunner1990computing}: using the observation that the intersection of two lines occurs at a single point, one can efficiently enumerate all regions with proper data structures. Unfortunately, we are not aware of practical implementations for $p>2$. In theory, it is possible to do so via linear optimization for example \citep{vcerny2019walks}, but the additional computational cost can be excessive. Moreover, the computational cost of computing $\mathcal{O}(n^p)$ least squares estimators can be prohibitive even if $p=3$ for sufficiently large values of $n$.

\subsection{Heuristics} \label{sec:heuristics}Given the perceived inefficiencies of exact methods, the preferred solution approach for Problem~\eqref{eq:LTS} is via heuristics. The most popular heuristic, called FAST-LTS \citep{rousseeuw2006computing}, is based on alternating minimization and works as follows. Starting from an initial point $\bm{\beta}$, the heuristic alternates between: \textit{(i)} flag the points with largest residuals as outliers (optimization over $\bm{z}$); \textit{(ii)} solve a least squares problem for the new inlier/outlier combination (optimization over $\bm{\beta})$. Variants of this method are common in the literature for problems similar to Problem~\eqref{eq:LTS} \citep{shen2019iterative,shen2019learning}. While alternating minimization heuristics have been observed to work well in high signal-to-noise ratios, and performance guarantees can be established under appropriate conditions \citep{bhatia2015robust}, they have also been shown to lead to suboptimal solutions in challenging instances~\citep{gomez2025outlier}.

\section{Enhanced MIO formulation with hyperplane arrangements}
\label{sec:MIOformulation}
The perspective formulation \eqref{eq:LTS_persp} treats each observation independently: it strengthens the relaxation of each $w_i^2$ term, but does not exploit the relationship between the outlier indicators $\bm{z}$ and the regression coefficients $\bm{\beta}$. Hyperplane arrangements, on the other hand, capture precisely this relationship through the threshold structure of Proposition~\ref{prop:optimal}. We propose a formulation that incorporates this structure into the perspective formulation, by enforcing the logical relationships
\begin{subequations}\label{eq:logicalConstraints}
\begin{align}
z_i=0\implies& \bm{\beta}\in B^{=}_i\label{eq:logicalConstraints_eq}\\
z_i=1\implies& \bm{\beta}\in B^{\leq}_i\cup B^{\geq}_i\label{eq:logicalConstraints_pm}
\end{align}
\end{subequations}
for all datapoints $i\in [n]$.
A direct approach to enforce \eqref{eq:logicalConstraints} is by including additional binary variables $\bm{z^-},\bm{z^+}\in \{0,1\}^n$ such that $\bm{z^-}+\bm{z^+}=\bm{z}$ and in the big-M constraints
\begin{subequations}\label{eq:HA_BigM}
\begin{align}
&y_i+\sqrt{2\mu}-M(1-z_i^+)\leq\bm{x_i^\top \beta}\leq y_i-\sqrt{2\mu}+M(1-z_i^-) \\
&y_i-\sqrt{2\mu}-M(z_i^-+z_i^+)\leq \bm{x_i^\top \beta}\leq y_i+\sqrt{2\mu}+M(z_i^-+z_i^+)
\end{align}
\end{subequations}
for all $i\in [n]$. The additional variables $\bm{z^-},\bm{z^+}$ have the interpretation that ``$z_i^+=1$ if and only if $\bm{\beta}\in B_i^+$" and ``$z_i^-=1$ if and only if $\bm{\beta}\in B_i^-$". Note that constraints \eqref{eq:HA_BigM} are not valid for \eqref{eq:LTS_BigM} or \eqref{eq:LTS_persp} in the usual sense, as they remove feasible integer points. Nonetheless, they do not remove any optimal solutions of the MIO problems and can be used as optimality cuts or constraints.

A limitation of constraints \eqref{eq:HA_BigM} is the presence of the big-M terms. Indeed, the constant $M$ is required to be at least as large as the maximum residual, which in principle can be arbitrarily large. 
We now propose an alternative formulation that imposes the logical considerations \eqref{eq:logicalConstraints} on the perspective formulation of the LTS problem \eqref{eq:LTS_persp} without using big-M constraints:

\begin{subequations}\label{eq:LTS_perspHA}
\begin{align}
\min_{\substack{\bm{\beta}, \bm{w}, \bm{z}\\\bm{w^\pm},\bm{z^\pm}}} \; & \frac{1}{2}\|\bm{y}\|_2^2 - \bm{y}^\top (\bm{X\beta} - \bm{w}) + \frac{1}{2} \begin{pmatrix} \bm{\beta}^\top & \bm{w}^\top \end{pmatrix} \widetilde{\bm{\Sigma}}_{\bm{d}} \begin{pmatrix} \bm{\beta} \\ \bm{w} \end{pmatrix}  \!+\!\sum_{i=1}^n  \left( \mu z_i + \frac{d_i (w_i^+)^2}{z_i^+} +  \frac{d_i (w_i^-)^2}{z_i^-}\right) \nonumber \\
\text{s.t.~~} \; & w_i^- \geq \sqrt{2\mu} \, z_i^-, \quad w_i^+ \geq \sqrt{2\mu} \, z_i^+, \quad i \in [n], \label{eq:LTS_perspHA_linear}\\
& |y_i-\bm{x_i^\top\beta} - w_i| \leq \sqrt{2\mu} (1 - z_i^- - z_i^+), \quad i \in [n], \label{eq:LTS_perspHA_abs}\\
& z_i = z_i^+ + z_i^-, \quad w_i = w_i^+ - w_i^-, \quad i \in [n], \label{eq:LTS_perspHA_equalities}\\
& \bm{\beta} \in \mathbb{R}^p, \; \bm{w} \in \mathbb{R}^n, \bm{z} \in \{0,1\}^n, \bm{w^-},\bm{w^+}\in \mathbb{R}_+^n, \bm{z^-},\bm{z^+}\in \{0,1\}^n.
\end{align}
\end{subequations}
Note that in Problem~\eqref{eq:LTS_perspHA}, the optimization variables $\bm{z}$ and $\bm{w}$ can be projected out and constraints \eqref{eq:LTS_perspHA_equalities} removed, but we keep them for now to facilitate comparisons with Problem~\eqref{eq:LTS_persp}. All constraints in problem~\eqref{eq:LTS_perspHA} are either linear or can be linearized in the case of \eqref{eq:LTS_perspHA_abs}. Intuitively, variables $\bm{w^+}$ and $\bm{w^-}$ represent the positive and negative parts of $\bm{w}$, respectively; variable $z_i^+=1$ ($z_i^-=1)$ if point $i$ is an outlier by overestimating (underestimating) the response variable. The rest of this section is devoted to studying formulation~\eqref{eq:LTS_perspHA}.

\subsection{Formulation Strength}
The proposed formulation \eqref{eq:LTS_perspHA} is at least as strong as the perspective reformulation \eqref{eq:LTS_persp}, and avoids using big-M constraints such as \eqref{eq:HA_BigM}. The perspective reformulation \eqref{eq:LTS_persp} is obtained from the convex hull of the epigraph of the quadratic regularization term and indicator constraints, that is, 
$$Z_{\text{persp}}=\left\{(z,w,t)\in \{0,1\}\times \R^2: t\geq w^2,\; w(1-z)=0 \right\}.$$ Our formulation is based on a richer structure, involving a residual $r_i$ corresponding to a term of the form $y_i-\bm{x_i^\top\beta}$ for some $i\in [n]$. Namely, given $b\in \R_+$, we consider the set 
$$Z_{\text{HA}}(b)=\left\{(z,w,r,t)\in \{0,1\}\times \R^3: t\geq w^2,\; w(1-z)=0,\; |r|(1-z)\leq b(1-z),\; |r|z\geq bz,\; w=rz \right\}.$$
Using decomposition \eqref{eq:quad_decomp}, we can reformulate Problem~\eqref{eq:LTS} as the MIO
\begin{subequations}\label{eq:LTS_IP}
\begin{align}
\min_{\bm{\beta}, \bm{w}, \bm{z},\bm{t}} ~~&~~ \frac{1}{2}\|\bm{y}\|_2^2 - \bm{y}^\top (\bm{X\beta} - \bm{w}) + \frac{1}{2} \begin{pmatrix} \bm{\beta}^\top & \bm{w}^\top \end{pmatrix} \widetilde{\bm{\Sigma}}_{\bm{d}} \begin{pmatrix} \bm{\beta} \\ \bm{w} \end{pmatrix}  \!+\!\sum_{i=1}^n  \left( \mu z_i + d_it_i\right) \\
\text{s.t.} ~~\; & (z_i,w_i,y_i-\bm{x_i^\top \beta},t_i)\in Z_{\text{HA}}(\sqrt{2\mu})\quad \forall i\in [n]\label{eq:LTS_IP_constr}\\
&\bm{\beta}\in \R^p,\;\bm{w}\in \R^n,\;\bm{z}\in \{0,1\}^n,\;\bm{t}\in \R^n.
\end{align}
\end{subequations}
Note that constraints \eqref{eq:LTS_IP_constr} include the epigraph constraint $t_i\geq w_i^2$. In addition:
\begin{enumerate}
\item if $z_i=0$, then the constraint in~\eqref{eq:LTS_IP_constr} 
reduces to $w_i=0$ and $|y_i-\bm{x_i^\top\beta}|\leq \sqrt{2\mu}$, encapsulating precisely relationship \eqref{eq:logicalConstraints_eq}.
\item if $z_i=1$, then the constraint \eqref{eq:LTS_IP_constr} 
reduces to $|y_i-\bm{x_i^\top\beta}|\geq \sqrt{2\mu}$ --corresponding precisely to \eqref{eq:logicalConstraints_pm}-- and $w=y_i-\bm{x_i^\top\beta}$, encapsulating the additional optimality condition that the additional variables $w_i$ are exactly the residuals to ensure that the associated term vanishes.
\end{enumerate}
Naturally, set $Z_{\text{HA}}(b)$ is highly nonconvex due to the presence of binary variables and quadratic constraints. Thus, instead of relying directly on it, our formulation relaxes constraints \eqref{eq:LTS_IP_constr} to
 $(z_i,w_i,y_i-\bm{x_i^\top \beta},t_i)\in \text{cl conv}\left(Z_{\text{HA}}(\sqrt{2\mu})\right),\; \forall i\in [n]$. The description of this convex hull is given in Proposition~\ref{prop:hull}, and the proof is deferred to Appendix~\ref{sec:proofPropHull}.

\begin{proposition}\label{prop:hull}
The closure of the convex hull of $Z_{\text{HA}}(b)$ is given by
$\text{cl conv}\left(Z_{\text{HA}}(b)\right)=\text{proj}_{(z,w,r,t)} \bar Z(b)$ where
\begin{align*}\bar Z(b)=\Big\{(z,w,r,t,w^-,w^+,z^-,z^+):t\geq \frac{(w^+)^2}{z^{+}} + \frac{(w^-)^2}{z^-},\; w^-\geq bz^-,\; w^+\geq bz^+, \\|r-w|\leq b(1-z^--z^+),\;
z^-+z^+=z\leq 1,\; w=w^+-w^-,\; z^-,z^+\geq 0\Big\}.\end{align*}
\end{proposition}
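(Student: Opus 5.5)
The plan is to prove the two inclusions separately, using the disjunctive structure of $Z_{\text{HA}}(b)$. First I would decompose $Z_{\text{HA}}(b)$ into three closed convex pieces according to the value of $z$ and the sign of $r$. For $z=0$ the constraints reduce to $w=0$, $|r|\le b$, $t\ge 0$; call this $S_0$. For $z=1$ they reduce to $w=r$, $|r|\ge b$, $t\ge w^2$. This last set is the union of
\[
S_+=\{(1,w,w,t): w\ge b,\ t\ge w^2\}
\quad\text{and}\quad
S_-=\{(1,w,w,t): w\le -b,\ t\ge w^2\}.
\]
Hence $Z_{\text{HA}}(b)=S_0\cup S_+\cup S_-$, and the claim is a Balas-type disjunctive hull with perspective terms. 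In $\bar Z(b)$, the variables $z^+,z^-$ and $1-z$ play the roles of the multipliers of $S_+$, $S_-$ and $S_0$. The variables $w^+$ and $-w^-$ are the scaled $w$-copies from $S_\pm$, and $r-w$ is the scaled $r$-copy from $S_0$.

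\textbf{Step 1: $\text{proj}\,\bar Z(b)$ is a convex set containing $Z_{\text{HA}}(b)$.} The set $\bar Z(b)$ is convex, since $(w^\pm)^2/z^\pm$ is a perspective (hence jointly convex) function and all other constraints are linear or of the form $|\cdot|\le$ linear. Projection preserves convexity. For containment I would lift each piece explicitly.
\begin{itemize}
\item A point of $S_0$ lifts with $z^\pm=w^\pm=0$; then $|r-w|=|r|\le b$.
\item A point of $S_+$ lifts with $z^+=1$, $w^+=w\ge b$ and $z^-=w^-=0$; then $|r-w|=0\le b\cdot 0$ and $t\ge (w^+)^2$.
\item A point of $S_-$ lifts symmetrically.
\end{itemize}
This gives $\text{cl conv}\,Z_{\text{HA}}(b)\subseteq \text{cl}\,\text{proj}\,\bar Z(b)$.

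\textbf{Step 2: $\text{proj}\,\bar Z(b)\subseteq \text{cl conv}\,Z_{\text{HA}}(b)$.} Take a point of $\bar Z(b)$ with $z^+,z^->0$ and $z_0:=1-z>0$, and write it as the convex combination
\[
z^+\Big(1,\tfrac{w^+}{z^+},\tfrac{w^+}{z^+},\big(\tfrac{w^+}{z^+}\big)^2\Big)
+z^-\Big(1,-\tfrac{w^-}{z^-},-\tfrac{w^-}{z^-},\big(\tfrac{w^-}{z^-}\big)^2\Big)
+z_0\Big(0,0,\tfrac{r-w}{z_0},\tfrac{s}{z_0}\Big),
\]
where $s:=t-\frac{(w^+)^2}{z^+}-\frac{(w^-)^2}{z^-}\ge 0$.
\begin{itemize}
\item The constraint $w^\pm\ge bz^\pm$ puts the first two points in $S_+$ and $S_-$.
\item The constraint $|r-w|\le b z_0$ puts the third point in $S_0$.
\item Summing coordinates recovers $z$, $w=w^+-w^-$, $r=w+(r-w)$ and $t$.
\end{itemize}
Degenerate cases are handled by dropping the corresponding piece. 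If $z^\pm=0$, the perspective convention forces $w^\pm=0$, since otherwise $t=\infty$. If $z_0=0$, the absolute-value constraint forces $r=w$. Alternatively, degenerate points can be reached as limits of nondegenerate ones, which suffices because the target is a closure.

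\textbf{Step 3: closedness of $\text{proj}\,\bar Z(b)$.} Steps 1 and 2 give $\text{conv}\,Z_{\text{HA}}\subseteq \text{proj}\,\bar Z\subseteq \text{cl conv}\,Z_{\text{HA}}$, so it remains to show the projection is closed. I expect this to be the main technical point, because projections of closed sets need not be closed and the perspective is only lower semicontinuous at $z^\pm=0$.

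I would argue by compactness of the lifted variables. Take a convergent sequence $(z_k,w_k,r_k,t_k)$ in the projection with lifts $(w_k^\pm,z_k^\pm)$. Then $z_k^\pm\in[0,1]$, and $(w_k^\pm)^2\le t_k z_k^\pm\le t_k$, which is bounded. So a subsequence of the lifts converges. Finally, $\bar Z(b)$ is closed when the perspective term is interpreted as its closed (lower semicontinuous) extension, with $0/0=0$ and $x/0=\infty$ as stated in the paper. Hence the limit lifts to a point of $\bar Z(b)$, completing the proof.
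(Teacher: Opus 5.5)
Your proposal is correct and follows essentially the paper's route. You use the same three-way disjunction $S_0\cup S_+\cup S_-$ (the paper's $Z_=\cup Z_+\cup Z_-$), with $1-z$, $z^+$, $z^-$ as the multipliers and $r-w$, $w^+$, $-w^-$ as the scaled copies; the only differences are that you verify both inclusions directly instead of writing down the disjunctive extended formulation and eliminating variables, and that your compactness argument makes explicit the closedness of the projection, which the paper leaves to the cited disjunctive-programming theory.

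One small fix is needed in the degenerate case $1-z=0$ with slack $s>0$: simply dropping the $S_0$ piece loses $s$. Either absorb $s$ into the $t$-coordinate of an $S_\pm$ point (allowed, since those pieces only require $t\ge w^2$), or fall back on your limiting argument.
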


The constraints defining set $\bar Z(b)$ in Proposition~\ref{prop:hull} are precisely \eqref{eq:LTS_perspHA_linear}-\eqref{eq:LTS_perspHA_equalities}. We formalize this property in the following corollary.

\begin{corollary}
    Formulation \eqref{eq:LTS_perspHA} is a correct formulation for Problem~\eqref{eq:LTS}, obtained from Problem~\eqref{eq:LTS_IP} by replacing constraints \eqref{eq:LTS_IP_constr} with $(z_i,w_i,y_i-\bm{x_i^\top \beta},t_i)\in \text{cl conv}\left(Z_{\text{HA}}(\sqrt{2\mu})\right),\; \forall i\in [n]$.
\end{corollary}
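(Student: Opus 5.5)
The plan has two parts. First, show that Problem~\eqref{eq:LTS_IP} is a correct reformulation of Problem~\eqref{eq:LTS}. Second, show that replacing each set $Z_{\text{HA}}(\sqrt{2\mu})$ by its closed convex hull, while keeping $z_i$ (equivalently $z_i^\pm$) binary, leaves the set of feasible integer points unchanged. The second part yields exactly formulation~\eqref{eq:LTS_perspHA}.

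\textbf{Correctness of \eqref{eq:LTS_IP}.} Fix a feasible $(\bm\beta,\bm w,\bm z,\bm t)$. At an optimum $t_i=w_i^2$. The constraints in $Z_{\text{HA}}$ force $w_i=z_i(y_i-\bm{x_i^\top\beta})$. By decomposition~\eqref{eq:quad_decomp}, the objective then equals
\begin{equation*}
\frac12\sum_i (y_i-\bm{x_i^\top\beta}-w_i)^2+\frac{\lambda}{2}\|\bm\beta\|_2^2+\mu\sum_i z_i .
\end{equation*}
Substituting $w_i=z_i r_i$, this is $\frac12\sum_i r_i^2(1-z_i)+\frac{\lambda}{2}\|\bm\beta\|_2^2+\mu\sum_i z_i$, which is the objective of \eqref{eq:LTS}. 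The remaining constraints of $Z_{\text{HA}}$ are $|r_i|\le\sqrt{2\mu}$ when $z_i=0$ and $|r_i|\ge\sqrt{2\mu}$ when $z_i=1$. These remove only points that Proposition~\ref{prop:optimal} shows cannot be optimal. Moreover, for every $\bm\beta$ the choice $z_i=\mathbb 1\{|r_i|>\sqrt{2\mu}\}$ satisfies them, with ties broken either way. So \eqref{eq:LTS_IP} and \eqref{eq:LTS} have the same optimal value and the same optimal $(\bm\beta,\bm z)$.

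\textbf{Passing to the hull.} By Proposition~\ref{prop:hull}, $\text{cl conv}(Z_{\text{HA}}(b))$ is the projection of $\bar Z(b)$. After substituting $t_i$ by its lower bound, the constraints of $\bar Z(\sqrt{2\mu})$ are exactly \eqref{eq:LTS_perspHA_linear}--\eqref{eq:LTS_perspHA_equalities} together with the perspective terms in the objective of \eqref{eq:LTS_perspHA}. It remains to check that adding the integrality $z^\pm_i\in\{0,1\}$ (so $z_i\in\{0,1\}$) recovers $Z_{\text{HA}}$ exactly, so that no spurious integer points are introduced. This step needs the most care, because the hull relaxation must not enlarge the set of integer-feasible points. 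There are three cases.
\begin{itemize}
\item If $z_i^+=z_i^-=0$: the perspective terms force $w_i^\pm=0$ (convention $x/0=\infty$), so $w_i=0$, and \eqref{eq:LTS_perspHA_abs} gives $|r_i|\le\sqrt{2\mu}$.
\item If $z_i^+=1$ and $z_i^-=0$: we get $w_i^-=0$ and $w_i=w_i^+\ge\sqrt{2\mu}$, and \eqref{eq:LTS_perspHA_abs} forces $r_i=w_i$. Hence $r_i\ge\sqrt{2\mu}$, $w_i=r_i z_i$, and $t_i\ge w_i^2$.
\item If $z_i^-=1$ and $z_i^+=0$: the same argument gives $r_i=w_i\le-\sqrt{2\mu}$.
\end{itemize}
Conversely, every point of $Z_{\text{HA}}$ lifts to $\bar Z$ by choosing $z^\pm$ according to the sign of $r$ and setting $w^\pm=\max\{\pm w,0\}$. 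With integral $z$, the integer-feasible sets and objective values of \eqref{eq:LTS_perspHA} and \eqref{eq:LTS_IP} therefore coincide. Combined with the first part, this proves the corollary.
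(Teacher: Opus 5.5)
Your proof is correct and follows the paper's route. You establish validity of \eqref{eq:LTS_IP} from decomposition~\eqref{eq:quad_decomp} and the case analysis on $z_i$, then read \eqref{eq:LTS_perspHA_linear}--\eqref{eq:LTS_perspHA_equalities} off the lifted set $\bar Z(\sqrt{2\mu})$ of Proposition~\ref{prop:hull}.

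Your explicit check that binary $z_i^\pm$ recovers $Z_{\text{HA}}$ is a step the paper leaves implicit, and it is genuinely needed, so your closing phrase ``with integral $z$'' should read ``with integral $z^\pm$''. For $b>0$, binary $z$ alone is not enough: the midpoint $(1,0,0,b^2)$ of $(1,b,b,b^2)$ and $(1,-b,-b,b^2)$, both in $Z_{\text{HA}}(b)$, lies in $\text{conv}(Z_{\text{HA}}(b))$ with $z=1$ yet $|r|<b$.
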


So far we have established that formulation \eqref{eq:LTS_perspHA} is strong, in the sense of directly using the convex hull of a region involving a mix of feasibility and optimality conditions. The formulation is indeed the strongest in its class, that is, no stronger formulation can exist unless additional structure is included in the set (e.g., considering sets with multiple datapoints with non-trivial interactions). However, we state in Proposition~\ref{prop:relaxationHA} that the proposed relaxation does not improve the continuous relaxation of \eqref{eq:LTS_persp}. This result is a consequence of the more general Proposition~\ref{prop:perspectiveLoss}, and we defer its proof until then.

\begin{proposition}\label{prop:relaxationHA}
Constraints \eqref{eq:LTS_perspHA_linear}-\eqref{eq:LTS_perspHA_abs} are redundant for the continuous relaxation of Problem~\eqref{eq:LTS_perspHA}. 
\end{proposition}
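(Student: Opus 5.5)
The plan is to show that the optimal value of the continuous relaxation of \eqref{eq:LTS_perspHA} does not change when constraints \eqref{eq:LTS_perspHA_linear}--\eqref{eq:LTS_perspHA_abs} are dropped. Without these constraints, minimizing over the split $(w_i^\pm,z_i^\pm)$ for fixed $(w_i,z_i)$ gives $d_i(w_i^+)^2/z_i^+ + d_i(w_i^-)^2/z_i^- \ge d_i w_i^2/z_i$, with equality for the one-sided split. So the constraint-free relaxation is exactly the continuous relaxation of \eqref{eq:LTS_persp}. It therefore suffices to take an optimal solution $(\bm\beta,\bm w,\bm z)$ of the relaxation of \eqref{eq:LTS_persp}, with $\bm z\in[0,1]^n$, and build a split satisfying \eqref{eq:LTS_perspHA_linear}--\eqref{eq:LTS_perspHA_abs} at no extra cost. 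I would use the one-sided split: $w_i^+=w_i$, $z_i^+=z_i$, $w_i^-=z_i^-=0$ if $w_i\ge 0$, and symmetrically if $w_i<0$. The work is then in verifying the two constraints from optimality conditions.

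\textbf{Step 1 (optimal $z$).} For fixed $(\bm\beta,\bm w)$, the objective is separable in $\bm z$. Each $z_i$ minimizes $\mu z_i + d_i w_i^2/z_i$ over $[0,1]$, so $z_i=\min\{1,\sqrt{d_i/\mu}\,|w_i|\}$.

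\textbf{Step 2 (constraint \eqref{eq:LTS_perspHA_linear}).} Positive semidefiniteness of $\widetilde{\bm\Sigma}_{\bm d}$ forces its lower-right block to satisfy $1-2d_i\ge 0$, i.e.\ $d_i\le 1/2$. If $z_i<1$, then $\sqrt{2\mu}\,z_i=\sqrt{2d_i}\,|w_i|\le |w_i|$. If $z_i=1$, then $|w_i|\ge\sqrt{\mu/d_i}\ge\sqrt{2\mu}$. In both cases $|w_i|\ge\sqrt{2\mu}\,z_i$, which is \eqref{eq:LTS_perspHA_linear} for the active sign.

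\textbf{Step 3 (constraint \eqref{eq:LTS_perspHA_abs}, the main obstacle).} This constraint couples $w_i$ with the residual $r_i=y_i-\bm x_i^\top\bm\beta$. I would partially minimize over $z_i$ (giving the reverse-Huber-type term $h_i(w_i)=2\sqrt{\mu d_i}|w_i|$ for small $|w_i|$, and $\mu+d_iw_i^2$ otherwise) and then use first-order optimality in $w_i$. Using \eqref{eq:quad_decomp}, the gradient of the full objective with respect to $w_i$ is $w_i-r_i-2d_iw_i$ plus the derivative of $h_i$. Stationarity then gives $r_i-w_i\in\partial h_i(w_i)-2d_iw_i$. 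The subdifferential at $0$ is $[-2\sqrt{\mu d_i},2\sqrt{\mu d_i}]$. In the linear regime one gets $|r_i-w_i| = 2\sqrt{\mu d_i}(1-\sqrt{d_i/\mu}\,|w_i|)$, which equals $2\sqrt{\mu d_i}\,(1-z_i)$. In the quadratic regime $z_i=1$ and $r_i=w_i$. Since $2\sqrt{\mu d_i}\le\sqrt{2\mu}$ when $d_i\le 1/2$, this yields $|r_i-w_i|\le\sqrt{2\mu}(1-z_i)$, i.e.\ \eqref{eq:LTS_perspHA_abs}.

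The delicate part is making this stationarity argument rigorous when the optimum sits at kinks or at $z_i\in\{0,1\}$, and handling sign consistency between $w_i$ and $r_i-w_i$. I would rather phrase it as a perturbation argument: if \eqref{eq:LTS_perspHA_abs} fails, moving $w_i$ toward $r_i$ (and updating $z_i$ via Step 1) strictly decreases the objective, contradicting optimality. This is the form I expect to reappear in Proposition~\ref{prop:perspectiveLoss}.
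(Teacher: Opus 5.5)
Your proposal is correct, and it takes a genuinely different route from the paper.

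\textbf{The paper's route.} The paper obtains Proposition~\ref{prop:relaxationHA} as a by-product of Proposition~\ref{prop:perspectiveLoss}. For fixed $\bm\beta$, it collapses the split variables using Lemma~\ref{lem:complementarity}. It then minimizes the one-dimensional problem over $w$ for fixed $z$ by completing the square, and then over $z$ using convexity of the resulting $g(z)$. This gives explicit minimizers $z^*(r),w^*(r)$ and the closed form of $\bar\phi$. Only at the end does it check that these minimizers satisfy \eqref{eq:phiGen2-cons}.

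\textbf{Your route.} You collapse the split by subadditivity of the perspective, i.e.\ Cauchy--Schwarz together with $|w_i^+-w_i^-|\le w_i^++w_i^-$. This is exactly what is needed once \eqref{eq:LTS_perspHA_linear} is dropped as well, since Lemma~\ref{lem:complementarity} is proved with that constraint present. You also minimize in the opposite order: first over $z$, which yields the reverse-Huber term $h_i$, and then you read both constraints directly off the optimality conditions in $w$. You never compute $\bar\phi$.

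\textbf{What each buys.} Your argument is shorter and also covers the boundary case $d_i=1/2$. The paper's argument produces the closed form $\bar\phi$ and the explicit $z^*(r)$, which the algorithm later relies on (the root relaxation \eqref{eq:root} and the branching and rounding rules). Because you only use that $(w_i,z_i)$ is optimal for the given residual $r_i$, your argument holds for every fixed $\bm\beta$, not just at a global optimum. So it gives the same pointwise statement as the paper.

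\textbf{The step you flag as delicate is not.} The function $\psi_i(w)=\tfrac12(r_i-w)^2-d_iw^2+h_i(w)$ is convex. This holds because $d_i\le 1/2$ and $h_i$ is the partial minimum over $z\in[0,1]$ of the jointly convex $\mu z+d_iw^2/z$. Moreover, $h_i$ is $C^1$ away from $w=0$: both pieces have value $2\mu$ and slope $2\sqrt{\mu d_i}$ at $|w|=\sqrt{\mu/d_i}$. Consequently:
\begin{itemize}
\item $0\in\partial\psi_i(w_i)$ is both necessary and sufficient for optimality;
\item your three cases ($w_i=0$, the linear regime, and the quadratic regime) are exhaustive;
\item $r_i-w_i$ automatically has the sign of $w_i$.
\end{itemize}
No perturbation argument is required.

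\textbf{A small correction in your favour.} Your identity $|r_i-w_i|=2\sqrt{\mu d_i}\,(1-z_i)$ in the intermediate regime is the precise one. The appendix asserts equality with $\sqrt{2\mu}(1-z^*)$, which holds only at $d=1/2$. In either case, the inequality $2\sqrt{\mu d_i}\le\sqrt{2\mu}$ is all that is needed.
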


\begin{corollary}
The optimal solutions and optimal objective values of the convex relaxations of formulations \eqref{eq:LTS_persp} and \eqref{eq:LTS_perspHA} coincide.
\end{corollary}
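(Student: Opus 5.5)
The statement to prove is the Corollary immediately above: the optimal solutions and optimal values of the convex relaxations of \eqref{eq:LTS_persp} and \eqref{eq:LTS_perspHA} coincide. The plan is to derive it from Proposition~\ref{prop:relaxationHA} by comparing the two relaxations after removing the redundant constraints.

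\textbf{Step 1: the pruned relaxation.} By Proposition~\ref{prop:relaxationHA}, constraints \eqref{eq:LTS_perspHA_linear}--\eqref{eq:LTS_perspHA_abs} can be dropped from the continuous relaxation of \eqref{eq:LTS_perspHA} without changing its optimal value or its projected optimal set. What remains is
\[
\min \; f(\bm\beta,\bm w)+\sum_{i=1}^n\Bigl(\mu z_i+\frac{d_i(w_i^+)^2}{z_i^+}+\frac{d_i(w_i^-)^2}{z_i^-}\Bigr),
\]
subject to the equalities \eqref{eq:LTS_perspHA_equalities}, $\bm w^\pm\ge \bm 0$ and $\bm z^\pm\in[0,1]^n$. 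Here $f$ collects the shared quadratic part of both objectives.

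\textbf{Step 2: project out the split variables.} Fix $(\bm\beta,\bm w,\bm z)$ with $\bm z\in[0,1]^n$ and minimize each summand over the split $w_i=w_i^+-w_i^-$, $z_i=z_i^++z_i^-$, with all four variables nonnegative.

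For the lower bound, assume $z_i^+,z_i^->0$ and apply Cauchy--Schwarz:
\[
\frac{(w_i^+)^2}{z_i^+}+\frac{(w_i^-)^2}{z_i^-}\;\ge\;\frac{(w_i^++w_i^-)^2}{z_i^++z_i^-}\;\ge\;\frac{w_i^2}{z_i}.
\]
The degenerate cases $z_i^\pm=0$ are handled by the convention $0/0=0$, $x/0=\infty$.

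For attainment, put all of $w_i$ on one side: if $w_i\ge0$, take $w_i^+=w_i$, $z_i^+=z_i$ and $w_i^-=z_i^-=0$; symmetrically if $w_i<0$. This gives equality.

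Hence the partial minimum of each summand equals $\mu z_i+d_iw_i^2/z_i$, which is exactly the summand of \eqref{eq:LTS_persp}.

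\textbf{Step 3: the big-M constraints.} If some $d_i=0$, the relaxation of \eqref{eq:LTS_persp} also carries $-Mz_i\le w_i\le Mz_i$. I would either assume $\bm d>\bm0$, as in choice \eqref{eq:minEigenvalue}, so these constraints are absent, or note that the projected form of \eqref{eq:LTS_perspHA} (with $d_i(w_i^\pm)^2/z_i^\pm$ forcing $w_i^\pm=0$ whenever $z_i^\pm=0$) matches up to this convention.

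\textbf{Step 4: conclude.} The two relaxations have the same value, and their optimal sets correspond under the projection onto $(\bm\beta,\bm w,\bm z)$, with the one-sided lift of Step~2 mapping optima back. I expect the only delicate point to be the boundary cases $z_i^\pm=0$ in the Cauchy--Schwarz step. Everything substantive is carried by Proposition~\ref{prop:relaxationHA}, which I take as given.
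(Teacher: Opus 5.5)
Your argument is correct and has the same skeleton as the paper's: the corollary is read off from Proposition~\ref{prop:relaxationHA}, and the only further ingredient is collapsing the split $(w_i^\pm,z_i^\pm)$ back to the single perspective term $d_iw_i^2/z_i$. You differ from the paper in the order of these two steps and in the collapse lemma.

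The paper works per datapoint (via \eqref{eq:perspProjected}) and compares the relaxation of \eqref{eq:LTS_perspHA} \emph{with} its constraints directly to the perspective one. Lemma~\ref{lem:complementarity} collapses the split by a scaling argument; its Case~3 uses $w_i^\pm\ge\sqrt{2\mu}z_i^\pm$ to get $w_i^\pm>0$, which yields \eqref{eq:phiGen2}. The paper then checks that the closed-form minimizer of the unconstrained problem \eqref{eq:phi_opt} satisfies \eqref{eq:phiGen2-cons}.

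You instead delete \eqref{eq:LTS_perspHA_linear}--\eqref{eq:LTS_perspHA_abs} first and collapse afterwards via $\frac{a^2}{x}+\frac{b^2}{y}\ge\frac{(a+b)^2}{x+y}\ge\frac{(a-b)^2}{x+y}$. This inequality needs no side constraints, which is exactly what your order requires, since the paper's lemma as written relies on \eqref{eq:LTS_perspHA_linear}. Your route is shorter and isolates the single fact needed beyond the proposition. The paper's route gives the explicit optimizer $(w^*,z^*)$ and a lemma valid for arbitrary node bounds $\bm{\ell^\pm},\bm{u^\pm}$, which it reuses for the branched relaxations.

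A few small repairs:
\begin{itemize}
\item Step~1's list of surviving constraints must include $\bm z\in[0,1]^n$, i.e.\ $z_i^++z_i^-\le 1$. This matches $z_i\le1$ in \eqref{eq:LTS_persp} and is no longer implied once \eqref{eq:LTS_perspHA_abs} is removed. You do use it in Step~2.
\item In Step~3 only your first option works. With $d_i=0$ the perspective term forces nothing, and \eqref{eq:LTS_persp} keeps big-$M$ constraints that \eqref{eq:LTS_perspHA} lacks. So read the corollary with $0<d_i<1/2$, as in Proposition~\ref{prop:perspectiveLoss} and \eqref{eq:minEigenvalue}.
\item To make equality of optimal sets independent of how ``redundant'' is read, observe that for $\mu>0$ the relaxation has a unique solution. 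Indeed, $\widetilde{\bm{\Sigma}}_{\bm{d}}\succ\bm 0$ gives strict convexity in $(\bm\beta,\bm w)$; then $z_i=\min\{1,|w_i|\sqrt{d_i/\mu}\}$ is forced; and the equality case of your inequality forces the one-sided split. Hence equal optimal values already give equal projected optimal sets.
\end{itemize}
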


In light of Proposition~\ref{prop:relaxationHA}, preferring formulation \eqref{eq:LTS_perspHA} over formulation \eqref{eq:LTS_persp} seems counterintuitive: the continuous relaxation is harder to solve due to the presence of additional constraints, and there appear to be no associated gains in terms of relaxation quality. However, we note that the result of Proposition~\ref{prop:relaxationHA} holds only for the relaxation of Problem~\eqref{eq:LTS_perspHA}, and need not hold if any additional constraint is added or objective terms are modified. In particular, the result of Proposition~\ref{prop:relaxationHA} does not hold when additional branching constraints of the form $z_i^\pm\leq 0$ or $z_i^\pm\geq 1$ are introduced. Thus, even if the root relaxations obtained from both formulations coincide, using formulation~\eqref{eq:LTS_perspHA} results in better relaxations at every node other than the root node of the branch-and-bound tree. In fact, as we show in the next section, the inclusion of constraints \eqref{eq:LTS_perspHA_linear}-\eqref{eq:LTS_perspHA_abs} guarantees that branch-and-bound algorithms explore a number of nodes polynomial in the number of datapoints $n$ when the dimension $p$ is fixed.

\subsection{Computational Cost}

Every time a branching constraint $z_i^{\pm}\leq 1$ or $z_i^\pm\geq 1$ is introduced, the feasible values for $\bm{\beta}\in \R^p$ are restricted: the regression coefficients are forced to be on one side of a hyperplane in the hyperplane arrangement induced by $\{\bm{x_i^\top\beta}=y_i\pm \sqrt{2\mu}\}_{i=1}^n$. Every time a branching decision is incompatible with a region of the hyperplane arrangement, then the branch-and-bound node is pruned due to being infeasible for Problem~\eqref{eq:LTS_perspHA}. In contrast, with formulation \eqref{eq:LTS_persp}, no branch-and-bound nodes are infeasible, and pruning only occurs due to bounding or by finding integer solutions. In Proposition~\ref{prop:complexity} we show that, thanks to the possibility of fathoming by infeasibility, branch-and-bound algorithms run in time polynomial in $n$.

\begin{proposition}\label{prop:complexity}
If formulation \eqref{eq:LTS_perspHA} is solved via the branch-and-bound method where:
\begin{itemize}
\item all convex interval relaxations are solved to optimality at each node of the branch-and-bound tree,
\item variable dichotomy is used for branching, that is, branching is performed only via disjunctions ``$z_i^\pm\leq 0$ or $z_i^\pm\geq 1$", and
\item only fractional variables are selected for branching,
\end{itemize}
then the branch and bound tree has at most $\mathcal{O}\left(\min\{4^{n}, n^{p+1}\}\right)$ nodes.
\end{proposition}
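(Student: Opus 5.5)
The plan is to bound two things: the depth of the tree, and the number of distinct nodes at each depth that can survive feasibility pruning. The $4^n$ bound is the easy part. There are $2n$ binary variables $z_i^\pm$, and branching only on fractional variables means no variable is branched on twice along a path, so every path has length at most $2n$. The bound $2^{2n+1}$ on nodes follows. A finer count also works: the constraint $z_i^-+z_i^+\le 1$ leaves only three feasible patterns per point, giving $O(3^n)$ leaves.

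For the polynomial bound, I will map each surviving node to a combinatorial object coming from the hyperplane arrangement $\mathcal{A}=\{\bm{x_i^\top\beta}=y_i\pm\sqrt{2\mu}\}_{i=1}^n$ of $2n$ affine hyperplanes in $\R^p$. A node is described by its branching constraints, say sets $F^+_1,F^-_1$ (variables fixed to $1$) and $F^+_0,F^-_0$ (fixed to $0$). I first translate these into constraints on $\bm\beta$ in the node relaxation.
\begin{itemize}
\item Fixing $z_i^+=1$ forces $z_i^-=0$, hence $w_i=w_i^+\ge\sqrt{2\mu}$, and \eqref{eq:LTS_perspHA_abs} forces $w_i=y_i-\bm{x_i^\top\beta}$. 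So $\bm\beta$ lies in the closed halfspace $\bm{x_i^\top\beta}\le y_i-\sqrt{2\mu}$. Symmetrically, $z_i^-=1$ gives the other side.
\item Fixing both $z_i^\pm=0$ forces $w_i=0$ and $|y_i-\bm{x_i^\top\beta}|\le\sqrt{2\mu}$, i.e.\ the band.
\item Fixing a single $z_i^+=0$ restricts $\bm\beta$ to the complement of the corresponding halfspace only in a relaxed sense, by convexity. Since the relaxation couples $\bm\beta$ with fractional $z_i^-$, I will treat it as imposing no restriction on $\bm\beta$ beyond the other constraints.
\end{itemize}
Thus each node determines a polyhedron $P_{\text{node}}$, an intersection of halfspaces and bands from $\mathcal{A}$, which must be nonempty or the node is pruned by infeasibility.

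The counting step is to show that distinct nodes at a given depth $k$ correspond to distinct pairs, each consisting of a nonempty face-type cell of the arrangement and a branching history. I will instead use a cleaner injection. Along any root-to-leaf path, consider the points $i$ that are fully decided, i.e.\ whose pattern is determined. For the leaves, each feasible leaf corresponds to a sign vector $\sigma\in\{-,0,+\}^n$ whose associated region $\bigcap_i R_i^{\sigma_i}$ is nonempty. By Proposition~\ref{prop:hyperplane}, applied in homogenized form in $\R^{p+1}$ to the $2n$ affine hyperplanes, the number of such nonempty cells, counting lower-dimensional faces, is $O((2n)^p)=O(n^p)$. Since the tree is binary with depth at most $2n$, the number of internal nodes is at most the number of leaves times depth. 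This gives $O(n^p\cdot n)=O(n^{p+1})$, which explains the extra factor of $n$.

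The main obstacle is that leaves need not be fully decided: a node may be fathomed by integrality, because its relaxation solution is integral, or by bound, before every $z_i^\pm$ is fixed. Distinct leaves then need not correspond to distinct full sign vectors. My first attempt is to argue that each leaf's relaxation optimum $\bm\beta^*$, together with its partial fixings, extends to a sign vector consistent with a nonempty cell. Two leaves are incomparable in the tree, so they differ on some branched variable fixed in opposite ways, and their regions are then disjoint. This gives an injection of leaves into nonempty cells of $\mathcal{A}$, using that a node's feasible $\bm\beta$-set contains some cell whenever it is nonempty. The delicate point is the case of a single fixing $z_i^+=0$, which does not cut $\bm\beta$ space. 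For that case I will use the branching-on-fractional rule together with the fact that $z_i^++z_i^-\le1$: the sibling with $z_i^+=1$ holds a halfspace, so the pair of siblings jointly covers distinct cells. I will then count leaves via cells of $\mathcal{A}$ refined by the opposite halfspace, which has the same $O(n^p)$ size.
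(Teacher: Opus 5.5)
Your $\mathcal{O}(4^n)$ bound is fine, but the polynomial bound has a genuine gap: you never count the leaves pruned by infeasibility. Your injection only applies to leaves whose $\bm\beta$-region is nonempty. An infeasible leaf has an empty region, so it cannot be mapped to any cell, and these leaves are exactly where the extra factor $n$ comes from.

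The step ``internal nodes $\le$ leaves $\times$ depth'' does not repair this. If ``leaves'' means all leaves, you have not bounded them; moreover every branched node has exactly two children, so internal nodes are fewer than leaves anyway and the depth factor buys nothing. If it means only the non-infeasible leaves, you need the unproved fact that every internal node has such a leaf below it, and you must still count the infeasible children hanging off those paths.

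The missing idea is the paper's charging (``relatives'') argument, and it is where the first hypothesis is essential. The parent of an infeasible leaf was not pruned, so its relaxation has a feasible point $\bar{\bm\beta}$. The two children of a branching on $z_j^\pm$ have $\bm\beta$-regions equal to the parent's region intersected with two complementary closed halfspaces. So repeatedly moving to the child that contains $\bar{\bm\beta}$ ends at a feasible leaf, which is integral in the no-bounding worst case. Hence every infeasible leaf is an off-path sibling on the root-to-leaf path of some feasible leaf. There are $\mathcal{O}(n^p)$ such leaves and each path has at most $2n$ off-path siblings. This gives $\mathcal{O}(n^{p+1})$ infeasible leaves and hence $\mathcal{O}(n^{p+1})$ nodes.

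Two smaller points. First, fixing only $z_i^+=0$ does cut $\bm\beta$-space. The perspective term forces $w_i^+=0$, and then \eqref{eq:LTS_perspHA_linear}--\eqref{eq:LTS_perspHA_abs} give $y_i-\bm{x_i^\top\beta}\le\sqrt{2\mu}(1-2z_i^-)\le\sqrt{2\mu}$. Conversely, every $\bm\beta$ in this closed halfspace is attainable by choosing $z_i^-\in\{0,1\}$ appropriately. So each node's $\bm\beta$-region is exactly an intersection of closed halfspaces of $\mathcal{A}$, and your ``delicate point'' disappears. Second, opposite fixings give closed halfspaces that share their boundary hyperplane, so regions of incomparable leaves are not disjoint as you claim. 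Replace the injection into cells by a count over faces with bounded multiplicity (in general position, at most $2^p$ sign patterns per face), or use the paper's correspondence between integral leaves and regions of the arrangement.
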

\begin{proof}{\textit{Proof}} The computational cost of $\mathcal{O}\left(4^{n}\right)$ is obtained since there are $2n$ binary variables, and a binary tree of depth $2n$ has at most $2\cdot 2^{2n}-1$ nodes.

To prove the complexity of $\mathcal{O}\left(n^{p+1}\right)$, assume the worst-case scenario that the branch-and-bound algorithm never fathoms nodes by bounding, and pruning only occurs at integer or infeasible solutions; these integer or infeasible nodes are thus the only leaves of the trees. Observe that there is a one-to-one correspondence between feasible integer solutions
of Porblem~\eqref{eq:LTS_perspHA} and regions of the hyperplane arrangement induced by $\{\bm{x_i^\top\beta}=y_i\pm \sqrt{2\mu}\}_{i=1}^n$. Thus, from Proposition~\ref{prop:hyperplane} we find that there are at most $\mathcal{O}({n^p})$ leaves corresponding to integer feasible solutions.

We now count the leaves corresponding to infeasible solutions. Consider an arbitrary leaf node that was pruned by infeasibility, which we refer to as the \emph{infeasible leaf}. Since we assumed the convex relaxations are solved to optimality, pruning by infeasibility is identified immediately, as soon as a subproblem is infeasible. In other words, the parent of the infeasible leaf has a feasible continuous relaxation, that is, there exist $\bm{\bar \beta}\in \R^p$ satisfying all constraints \eqref{eq:LTS_perspHA_linear}-\eqref{eq:LTS_perspHA_abs} and additional branching constraints added to reach that node. Therefore, this parent has a feasible descendant; in particular, there is a (unique) descendant obtained by branching at each step according to $\bm{\bar \beta}$ until an integer solution is found. We say that this unique feasible descendant and the original infeasible node are \emph{relatives} to each other. Finally, note that an integer node at depth $q$ of the tree can have at most $q-1$ infeasible relatives, where the upper bound is attained in the case shown in Figure~\ref{fig:unbalanced}. Since the maximum depth is $2n$ and there are at most $\mathcal{O}(n^p)$ integer nodes, it follows that there are at most $\mathcal{O}(2n^{p+1})$ infeasible nodes, at most $\mathcal{O}(n^p+2n^{p+1})$ leaves and twice as many nodes in a binary tree, proving the result.\hfill \Halmos

\begin{figure}[!h]
  \begin{center}
\includegraphics[width=0.25\textwidth, trim={6cm 5cm 16cm 1cm},clip]{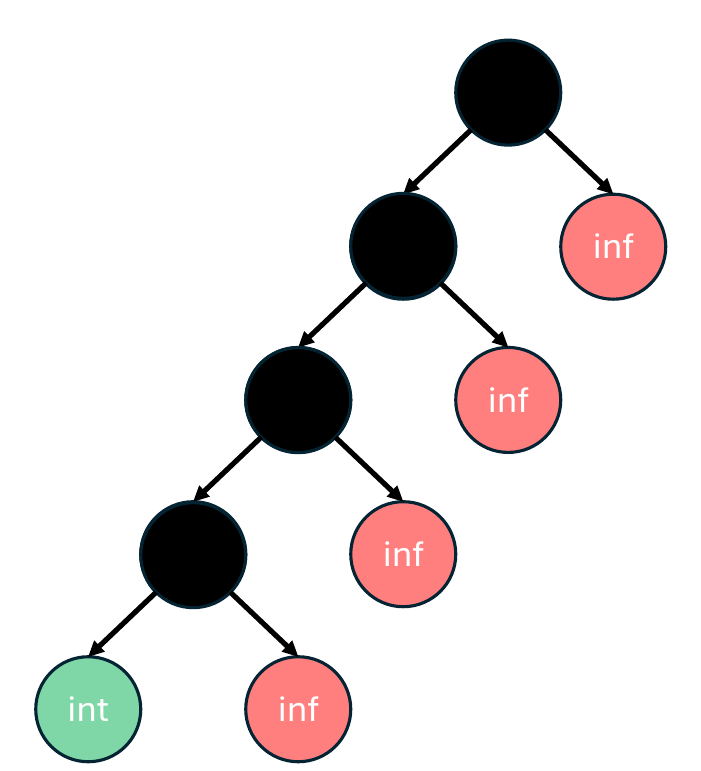}
  \caption{A depth 5 integer nodes with four relatives.}
  \label{fig:unbalanced}
  \end{center}
\end{figure}

\end{proof}

\begin{remark}
Proposition~\ref{prop:complexity} is stated in terms of the proposed formulation \eqref{eq:LTS_perspHA} for simplicity. However, we note that the same proof can be used for any correct formulation including the logic~\eqref{eq:logicalConstraints}. In particular, using the big-M formulation \eqref{eq:LTS_BigM} and big-M constraints \eqref{eq:HA_BigM} would have the same worst-case complexity (provided an adequate value of $M$ can be found). The second and third assumption in Proposition~\ref{prop:complexity} are imposed for simplicity, but the polynomial time complexity holds for most reasonable branching and variable selection strategies. The first assumption---relaxations being solved to optimality---on the other hand is crucial as the proof requires infeasible nodes to be identified immediately.   \hfill $\blacksquare$
\end{remark}

Proposition~\ref{prop:complexity} suggests that formulation \eqref{eq:LTS_perspHA} is particularly effective in low-dimensional settings, even if the number of datapoints (and binary variables) is large. Moreover, Proposition~\ref{prop:complexity} is concerned with a worst-case performance when no bounding occurs at all. In practice, branch-and-bound algorithms perform much better than their worst-case complexity, especially when using strong formulations. Thus, even if computational costs of the order $\mathcal{O}(n^{p+1})$ may appear prohibitively large for $p\approx 3$ and moderately large values of $n$, we show in our computational experiments that the proposed method explores far fewer branch-and-bound nodes in practice and exhibits good computational performance in practice.

Formulation \eqref{eq:LTS_perspHA} can be used with most off-the-shelf MIO solvers by reformulating the perspective terms $\left(w_i^\pm\right)^2/z_i^\pm$ as rotated cone constraints \citep[e.g., see][]{akturk2009strong,gunluk2010perspective} and linearizing the absolute value constraint \eqref{eq:LTS_perspHA_abs}. We found in our computations that this direct approach (with Gurobi as the MIO solver) works quite well provided that $n$ is small (in the low hundreds). However, we also observe that the direct approach can struggle if $n\geq 1000$, even if $p$ is very small. Indeed, formulation \eqref{eq:LTS_perspHA} requires creating $4n$ variables and sets of linear and conic constraints. Solving a large number of convex optimization problems of this form requires a substantial computational effort. In Section~\ref{sec:BnBSolver} we describe a custom branch-and-bound framework to address this issue. Rather than dealing with additional $4n$ variables, we handle the node relaxation problems only in the space of the regression coefficients $\beta \in \mathbb{R}^p$, which is substantially smaller in dimension. This tailored approach avoids the computational overhead incurred by the auxiliary variables introduced in formulation~\eqref{eq:LTS_perspHA}, yielding significant practical efficiency gains, as demonstrated in Section~\ref{sec:experiments}.

\begin{remark}\citet{huchette2019geometric} propose, in a different context, MIO formulations for settings defined by hyperplane arrangements. However, their approach is not comparable with ours, nor can it be directly applied to the setting we consider in a practical manner. At a high level, they propose to consider every region defined by the arrangement and construct an ideal formulation for the complete problem. Such an approach would require enumeration of every region \emph{a priori} (which is computationally demanding for $p\geq 3$), and may lead to a prohibitively large number of regions: for example, in a dataset we consider in our experiments with $n=86$ and $p=7$, the bound from Proposition~\ref{prop:hyperplane} suggests we may need to enumerate $10^9$ regions and then solve a convex problem with billions of decision variables. In contrast, formulation \eqref{eq:LTS_perspHA} does not lead to a strengthening of the continuous relaxation (Proposition~\ref{prop:relaxationHA}), but instead ensures that enumeration of the regions of the arrangement occurs implicitly during branching. Thanks to the ability to bound, the number of regions considered is substantially less than the worst case scenario: for example, in the same instance with $n=86$ and $p=7$, Gurobi terminates after only $15,000$ nodes.
$\blacksquare$
\end{remark}

%% file: Sections/Optimization.tex
\section{A Custom Branch-and-Bound 
Algorithm based on First Order Methods}

\label{sec:BnBSolver}

In this section, 
we present a custom nonlinear branch-and-bound (BnB) framework for Problem~\eqref{eq:LTS_perspHA}, in which the the primal and dual bounds of the node relaxations are obtained via specialized first-order methods. Our work draws inspiration from the work of~\citet{hazimeh2021sparse} for sparse linear models, but has important differences: \citet{hazimeh2021sparse} study ridge regularized least squares problem with a penalty on the number of nonzero regression coefficients. They employ standard binary branching on the binary variables (indicating whether a variable is zero or not), whereas we consider a nonconvex optimization problem expressed directly in terms of $\bm{\beta}\in \R^p$ — having projected out all other variables — and adopt a three-way branching scheme tailored to the structure of our formulation. For additional details on the general BnB framework, we refer the reader to \citet[Chapter~7]{wolsey2020bnb}; details on the specific components of our customized procedure are discussed in Section~\ref{subsect:overview}. Formally, defining function $f:\R^p\to \R$ as

\vspace{-0.6cm}
{\small\begin{align*}f(\bm{\beta})\defeq\min_{\substack{\bm{w},\bm{z}\\\bm{w^\pm},\bm{z}^\pm}}\; & \frac{1}{2}\|\bm{y}\|_2^2 - \bm{y}^\top (\bm{X\beta} - \bm{w}) + \frac{1}{2} \begin{pmatrix} \bm{\beta}^\top & \bm{w}^\top \end{pmatrix} \widetilde{\bm{\Sigma}}_{\bm{d}} \begin{pmatrix} \bm{\beta} \\ \bm{w} \end{pmatrix}  \!+\!\sum_{i=1}^n  \left( \mu z_i + \frac{d_i (w_i^+)^2}{z_i^+} +  \frac{d_i (w_i^-)^2}{z_i^-}\right)\\
\text{s.t.}\;&\eqref{eq:LTS_perspHA_linear},\eqref{eq:LTS_perspHA_abs},\eqref{eq:LTS_perspHA_equalities}\\
&\bm{w}, \bm{w^-},\bm{w^+}\in \mathbb{R}_+^n, \bm{z},\bm{z^-},\bm{z^+}\in \{0,1\}^n,
\end{align*}}
we observe that problem~\eqref{eq:LTS_perspHA} is equivalent to $\min_{\bm{\beta}\in\R^p}f(\bm{\beta})$. Function $\bm\beta\mapsto f(\bm\beta)$ is nonconvex, and our branch-and-bound procedure needs to compute, at each node, a lower bound on $f$ restricted to the subregion defined by the node's branching decisions. In particular, these branching decisions can be described in terms of bounds $\bm{\ell^\pm}, \bm{u^\pm} \in \{0,1\}^n$ with $\bm{\ell^\pm} \leq \bm{u^\pm}$ on the binary variables $\bm{z^\pm}$. Given such bounds, we consider lower bounding functions inspired by the interval relaxation

\vspace{-0.6cm}
{\small
\begin{equation}\label{eq:LTS_perspHA_relax}
\begin{aligned}f_R(\bm{\beta};\bm{\ell^\pm},\bm{u^\pm})\defeq\min_{\substack{\bm{w},\bm{z}\\\bm{w^\pm},\bm{z}^\pm}}\; & \frac{1}{2}\|\bm{y}\|_2^2 - \bm{y}^\top (\bm{X\beta} - \bm{w}) + \frac{1}{2} \begin{pmatrix} \bm{\beta}^\top & \bm{w}^\top \end{pmatrix} \widetilde{\bm{\Sigma}}_{\bm{d}} \begin{pmatrix} \bm{\beta} \\ \bm{w} \end{pmatrix}  \!+\!\sum_{i=1}^n  \left( \mu z_i + \frac{d_i (w_i^+)^2}{z_i^+} +  \frac{d_i (w_i^-)^2}{z_i^-}\right)\\
\text{s.t.}\;&\eqref{eq:LTS_perspHA_linear},\eqref{eq:LTS_perspHA_abs},\eqref{eq:LTS_perspHA_equalities}\\
&\bm{\ell^-}\leq\bm{z^-}\leq \bm{u^-},\bm{\ell^+}\leq\bm{z^+}\leq \bm{u^+}\\
&\bm{w}, \bm{w^-},\bm{w^+}\in \mathbb{R}_+^n, \bm{z},\bm{z^-},\bm{z^+}\in [0,1]^n.
\end{aligned}    
\end{equation}
}

First, in Section \ref{subsect:relaxation}, we describe explicit forms and relaxations of $f_R(\bm{\beta};\bm{\ell^\pm},\bm{u^\pm})$ based on augmented Lagrangians.
Next, in Section \ref{subsect:overview}, we describe the overall branching scheme, pruning conditions and other details for the BnB algorithm. In Section~\ref{sec:dualbound}, we discuss how to solve node relaxations and efficiently compute dual bounds. Finally, in Section~\ref{subsect:implementation} we discuss additional details and computational improvements of the BnB algorithm.

\subsection{Closed form expressions and differentiable lower bounds for $f_R$}\label{subsect:relaxation}

We begin by showing that $f_R$ admits a separable closed-form representation in terms of one-dimensional functions, which forms the basis of our subsequent algorithmic developments.
\begin{proposition}\label{prop:fR_separable}
For any $\bm{\beta} \in \mathbb{R}^p$ and any bounds $\bm{\ell^\pm}, \bm{u^\pm} \in \{0,1\}^n$ with $\bm{\ell^\pm} \leq \bm{u^\pm}$, the relaxation $f_R$ defined in~\eqref{eq:LTS_perspHA_relax} can be written as
\begin{align}\label{eq:perspProjected}
f_R(\bm{\beta}; \bm{\ell^\pm}, \bm{u^\pm}) \;=\; \frac{\lambda}{2}\|\bm{\beta}\|_2^2 + \sum_{i=1}^n \phi\!\left(y_i - \bm{x_i^\top \beta};\, \ell_i^-, \ell_i^+, u_i^-, u_i^+,\, d_i\right),
\end{align}
where, for each $r \in \mathbb{R}$ and $d > 0$, the one-dimensional function $\phi$ is defined as
{\small\begin{subequations}\label{eq:phiGen}
\begin{align}
\phi(r; \ell^-, \ell^+, u^-, u^+, d) \;=\; \min_{\substack{w^-, w^+ \in \mathbb{R}_+ \\ z^-, z^+ \in \mathbb{R}_+}}\;& \tfrac{1}{2}(r - w^+ + w^-)^2 + \mu(z^- + z^+) + \tfrac{d(w^+)^2}{z^+} + \tfrac{d(w^-)^2}{z^-} - d(w^+ - w^-)^2 \\
\text{s.t.}\;& w^- \geq \sqrt{2\mu}\, z^-, \quad w^+ \geq \sqrt{2\mu}\, z^+, \\
& |r - w^+ + w^-| \leq \sqrt{2\mu}\,(1 - z^- - z^+), \label{eq:phiGen_opt} \\
& \ell^- \leq z^- \leq u^-, \quad \ell^+ \leq z^+ \leq u^+, \quad z^- + z^+ \leq 1.
\end{align}
\end{subequations}}
Moreover, provided that $\bm{d}$ satisfies the criteria of Section~\ref{sec:persp}, the function $\bm{\beta} \mapsto f_R(\bm{\beta}; \bm{\ell^\pm}, \bm{u^\pm})$ is convex.
\end{proposition}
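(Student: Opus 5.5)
The proof has two parts: first expand the quadratic form so that the objective separates across observations, then obtain convexity from a joint-convexity argument.

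\textbf{Separability.} Write $\bm{r}=\bm{y}-\bm{X\beta}$. Using the decomposition \eqref{eq:quad_decomp}, expand the $\widetilde{\bm{\Sigma}}_{\bm{d}}$ term:
\[
\tfrac{1}{2}\|\bm{y}\|_2^2-\bm{y}^\top(\bm{X\beta}-\bm{w})+\tfrac12(\bm{\beta}^\top,\bm{w}^\top)\widetilde{\bm{\Sigma}}_{\bm{d}}(\bm{\beta};\bm{w})=\tfrac12\|\bm{y}-\bm{X\beta}-\bm{w}\|_2^2\cdots
\]
The intended identity is the following. The original $\bm{\Sigma}$-expression equals $\frac12\|\bm{y}-\bm{X\beta}+\bm{w}\|^2+\frac{\lambda}{2}\|\bm{\beta}\|^2$; the paper's sign convention for $\bm{w}$ is such that $\bm{w}$ enters the residual as $r_i-w_i$. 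Subtracting $\sum_i d_iw_i^2$ gives $\frac{\lambda}{2}\|\bm{\beta}\|^2+\sum_i\big[\frac12(r_i-w_i)^2-d_iw_i^2\big]$.

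Next, substitute $w_i=w_i^+-w_i^-$ and $z_i=z_i^++z_i^-$ from \eqref{eq:LTS_perspHA_equalities}. The objective becomes $\frac{\lambda}{2}\|\bm{\beta}\|^2$ plus a sum of terms, each depending only on $(r_i,w_i^\pm,z_i^\pm)$. The constraints \eqref{eq:LTS_perspHA_linear}, \eqref{eq:LTS_perspHA_abs} and the bounds $\ell_i^\pm\le z_i^\pm\le u_i^\pm$ likewise involve only index $i$. The remaining condition $z_i\in[0,1]$ becomes $z_i^-+z_i^+\le 1$, with $z_i^\pm\ge 0$. The minimization therefore decouples over $i$, and the $i$-th subproblem is exactly \eqref{eq:phiGen} evaluated at $r=y_i-\bm{x_i^\top\beta}$. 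This gives \eqref{eq:perspProjected}. The only care needed is the sign bookkeeping for $\bm{w}$ and the convention $0/0=0$ for the perspective terms.

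\textbf{Convexity.} I would not work with $\phi$ directly, since it is not convex in $r$ by itself: its objective contains the concave term $-d(w^+-w^-)^2$. Instead I would go back to the joint problem \eqref{eq:LTS_perspHA_relax} as a function of $(\bm{\beta},\bm{w},\bm{w^\pm},\bm{z},\bm{z^\pm})$. By the choice of $\bm{d}$ in Section~\ref{sec:persp}, $\widetilde{\bm{\Sigma}}_{\bm{d}}\succeq\bm{0}$, so the quadratic part is jointly convex in $(\bm{\beta},\bm{w})$.

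The perspective terms $d_i(w_i^\pm)^2/z_i^\pm$ are jointly convex, as perspectives of convex functions, and the remaining objective terms are linear. Every constraint is convex and jointly in all variables: \eqref{eq:LTS_perspHA_abs} is a pair of linear inequalities in $(\bm{\beta},\bm{w},\bm{z^\pm})$, and the rest are linear. So $f_R$ is the partial minimization of a jointly convex function over a convex set whose fibres depend on $\bm{\beta}$. Partial minimization preserves convexity, provided we use the convention that $f_R=+\infty$ where the fibre is infeasible; this makes $f_R$ an extended-real-valued convex function.

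The main subtlety is exactly this point. The concavity from $-d(w^+-w^-)^2$ must be absorbed before the separable decomposition is applied, and it is absorbed only by the $\widetilde{\bm{\Sigma}}_{\bm{d}}\succeq\bm{0}$ condition on the coupled quadratic. For this reason convexity has to be proved from \eqref{eq:LTS_perspHA_relax} rather than from \eqref{eq:phiGen}. One also checks that the closure conventions at $z_i^\pm=0$ do not break lower semicontinuity.
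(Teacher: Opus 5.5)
Your proposal is correct and follows the paper's own route. You rewrite the $\widetilde{\bm{\Sigma}}_{\bm{d}}$ objective as $\frac{\lambda}{2}\|\bm{\beta}\|_2^2+\sum_i\bigl[\frac12(r_i-w_i)^2-d_iw_i^2\bigr]$ plus the $\mu z_i$ and perspective terms, and eliminate $w_i,z_i$ through \eqref{eq:LTS_perspHA_equalities} so the problem decouples into copies of \eqref{eq:phiGen}. You then get convexity from $\widetilde{\bm{\Sigma}}_{\bm{d}}\succeq\bm{0}$, joint convexity of the perspective terms, linear constraints, and preservation of convexity under partial minimization.

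Your sign observation is accurate: the printed linear term and off-diagonal blocks actually give $\frac12\|\bm{y}-\bm{X\beta}+\bm{w}\|_2^2$. So one must adopt the intended convention $r_i-w_i$, which only flips the off-diagonal sign of $\widetilde{\bm{\Sigma}}_{\bm{d}}$ and preserves its semidefiniteness. Likewise, $\bm{w}\in\mathbb{R}^n$ rather than $\mathbb{R}^n_+$ must be read in \eqref{eq:LTS_perspHA_relax}. Your write-up of the sign step is self-contradictory, though, and should be restated cleanly.
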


The representation~\eqref{eq:perspProjected} follows from the observation that, for fixed $\bm{\beta}$, the inner minimization in~\eqref{eq:LTS_perspHA_relax} is separable across the per-datapoint variable groups $(w_i, z_i, w_i^-, w_i^+, z_i^-, z_i^+)$: constraints~\eqref{eq:LTS_perspHA_linear}–\eqref{eq:LTS_perspHA_equalities} and the bounds on $\bm{z^\pm}$ all decouple across $i$; adding and subtracting $\bm{w}^\top \mathrm{Diag}(\bm{d})\bm{w}$ to the objective and minimizing out $w_i$ in closed form yields the per-datapoint subproblems~\eqref{eq:phiGen} after the substitution $r = y_i - \bm{x_i^\top \beta}$. Convexity of $f_R$ is a direct consequence of the discussion in Section~\ref{sec:persp}.

Proposition~\ref{prop:perspectiveLoss} provides a closed form expression of function $\phi$ when $\bm{\ell^\pm}=\bm{0}$ and $\bm{u^\pm}=\bm{1}$. 
\begin{proposition}\label{prop:perspectiveLoss}If $0<d<1/2$, $\bm{\ell^\pm}=\bm{0}$ and $\bm{u^\pm}=\bm{1}$, then $\phi$ defined in \eqref{eq:phiGen} is given by:
\begin{equation}\label{eq:phi_closed_form0}
\phi(r;0,0,1,1,d) = \begin{cases}
\dfrac{1}{2} r^2, & |r| \leq 2\sqrt{\mu d}\qquad\quad\;\;\; (z^-+z^+=0)\\[8pt]
\dfrac{-d r^2 + 2\sqrt{\mu d} |r| - 2\mu d}{1 - 2d}, & 2\sqrt{\mu d} < |r| < \sqrt{\dfrac{\mu}{d}}, \;\; (0<z^-+z^+<1)\\[8pt]
\mu, & |r| \ge \sqrt{\dfrac{\mu}{d}},\qquad\qquad\; (z^-+z^+=1)
\end{cases}
\end{equation}
where we indicate in parenthesis the optimal values of variables $(z^-,z^+)$. Moreover, the result holds if constraints \eqref{eq:phiGen_opt} are removed from \eqref{eq:phiGen}.
\end{proposition}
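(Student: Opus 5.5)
By symmetry in $r$ (swap the roles of $(w^-,z^-)$ and $(w^+,z^+)$) I will assume $r\ge 0$. I will first solve the problem with constraints \eqref{eq:phiGen_opt} removed, and then check that the minimizer I find also satisfies \eqref{eq:phiGen_opt} and the linear constraints $w^\pm\ge\sqrt{2\mu}z^\pm$. Since removing constraints can only lower the minimum, this establishes both claims at once.

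\textbf{Step 1: reduce to one sign.} With only the constraints $z^-+z^+\le 1$ and $z^\pm\in[0,1]$, I will argue that an optimal solution can take $w^-=z^-=0$ when $r\ge0$. Write $w=w^+-w^-$ and $z=z^++z^-$. By convexity of the perspective function, $\frac{(w^+)^2}{z^+}+\frac{(w^-)^2}{z^-}\ge\frac{(w^++w^-)^2}{z}\ge\frac{w^2}{z}$. Hence replacing the pair by a single positive (or negative) part with the total $z$ does not increase the objective. If the resulting $w$ is negative while $r\ge0$, flipping its sign does not increase $\frac12(r-w)^2$. Thus
\[
\phi=\min_{w\ge0,\;0\le z\le1}\ \tfrac12(r-w)^2+\mu z+d\,\frac{w^2}{z}-d w^2 .
\]

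\textbf{Step 2: minimize over $z$, then over $w$.} For fixed $w\ge0$, the function $\mu z+dw^2/z$ on $[0,1]$ is minimized at $z=\min\{1,w\sqrt{d/\mu}\}$. Its value is $2\sqrt{\mu d}\,w$ if $w\le\sqrt{\mu/d}$, and $\mu+dw^2$ otherwise. This gives a one-dimensional problem in $w$:
\[
g(w)=\begin{cases}\tfrac12(r-w)^2-dw^2+2\sqrt{\mu d}\,w, & w\le\sqrt{\mu/d},\\ \tfrac12(r-w)^2+\mu, & w\ge\sqrt{\mu/d}.\end{cases}
\]
Since $d<1/2$, the first piece is strictly convex. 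Its stationary point is $w=(r-2\sqrt{\mu d})/(1-2d)$. This point is positive iff $r>2\sqrt{\mu d}$, and it is at most $\sqrt{\mu/d}$ iff $r\le\sqrt{\mu/d}$; for the latter, a short computation uses $\sqrt{\mu/d}(1-2d)+2\sqrt{\mu d}=\sqrt{\mu/d}$. The second piece is minimized at $w=r$ when $r\ge\sqrt{\mu/d}$. The function $g$ is continuous and has matching derivatives at the breakpoint $w=\sqrt{\mu/d}$, so it is convex overall. The three regimes therefore follow:
\begin{itemize}
\item $w=0$, $z=0$ giving $\frac12r^2$;
\item the interior stationary point, where substituting gives $\frac{-dr^2+2\sqrt{\mu d}r-2\mu d}{1-2d}$ after routine algebra, with $0<z<1$;
\item $w=r$, $z=1$ giving $\mu$.
\end{itemize}
Restoring $|r|$ yields \eqref{eq:phi_closed_form0}.

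\textbf{Step 3: feasibility of the dropped constraints.} Constraint \eqref{eq:phiGen_opt} reads $r-w\le\sqrt{2\mu}(1-z)$. The linear constraint reads $w\ge\sqrt{2\mu}z$.
\begin{itemize}
\item \emph{Regime 1:} this needs $r\le\sqrt{2\mu}$. It holds because $2\sqrt{\mu d}<\sqrt{2\mu}$ for $d<1/2$.
\item \emph{Regime 3:} $w=r$ and $z=1$, so $r\ge\sqrt{\mu/d}>\sqrt{2\mu}$ gives $w\ge\sqrt{2\mu}$, and \eqref{eq:phiGen_opt} holds with both sides zero.
\item \emph{Middle regime:} here $z=w\sqrt{d/\mu}$. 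The linear constraint $w\ge\sqrt{2\mu}\,w\sqrt{d/\mu}=\sqrt{2d}\,w$ holds since $2d<1$. For \eqref{eq:phiGen_opt}, I will use the first-order condition $r-w=2dw-2\sqrt{\mu d}\le 0$ when $w\le\sqrt{\mu/d}$, which certainly is at most $\sqrt{2\mu}(1-z)\ge0$.
\end{itemize}

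The main obstacle I expect is Step 1: making rigorous that mixing positive and negative parts is never beneficial. The key points are handling the $0/0$ convention and justifying the sign flip. The rest is calculus on a convex piecewise function.
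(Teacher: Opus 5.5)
Your Steps 1 and 2 are correct, and they take a somewhat different route from the paper. The merging inequality $\frac{(w^+)^2}{z^+}+\frac{(w^-)^2}{z^-}\ge\frac{(w^++w^-)^2}{z^++z^-}$ needs no side constraints, whereas the paper's complementarity lemma relies on $w^\pm\ge\sqrt{2\mu}z^\pm$. Also, minimizing over $z$ first gives you a $C^1$ convex function of $w$, while the paper minimizes over $w$ for fixed $z$ and then studies $g(z)$.

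The genuine problem is in Step 3, middle regime, which is the one nontrivial feasibility check. Differentiating $\tfrac12(r-w)^2-dw^2+2\sqrt{\mu d}\,w$ gives $w-r-2dw+2\sqrt{\mu d}=0$, that is,
\[
r-w=2\sqrt{\mu d}-2dw=2\sqrt{d}\,\bigl(\sqrt{\mu}-\sqrt{d}\,w\bigr)\ \ge 0,
\]
which is strictly positive for $w<\sqrt{\mu/d}$. This is the opposite of the sign you wrote. You can also see it directly: $r-w^*=\frac{2(\sqrt{\mu d}-dr)}{1-2d}>0$ for $r<\sqrt{\mu/d}$, because the relaxation shrinks $w$ below $r$. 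So $r-w\le\sqrt{2\mu}(1-z)$ is the binding side of the constraint, not a trivial one, and your justification rests on a false inequality. Your strategy is to show that the minimizer of the fully relaxed problem is feasible. This check is therefore exactly what establishes the formula for $\phi$ as defined in \eqref{eq:phiGen}, i.e., with \eqref{eq:phiGen_opt} imposed.

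The repair is short, but it genuinely uses both $d<1/2$ and $w\le\sqrt{\mu/d}$. With $z=w\sqrt{d/\mu}$ you have $\sqrt{2\mu}(1-z)=\sqrt{2\mu}-\sqrt{2d}\,w$, hence
\[
\sqrt{2\mu}(1-z)-(r-w)=\bigl(1-\sqrt{2d}\bigr)\bigl(\sqrt{2\mu}-\sqrt{2d}\,w\bigr)\ \ge 0.
\]
The other side, $w-r\le 0\le\sqrt{2\mu}(1-z)$, is the trivial one. The slack is strictly positive in the open middle regime, so you should not expect the constraint to be tight there. With this replacement your argument is complete.
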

Interestingly, the closed form solution reveals new interpretations of the convex relaxations of the perspective formulation \eqref{eq:LTS_persp} and proposed formulation \eqref{eq:LTS_perspHA}: they are equivalent to using a special family of nonconvex loss functions that underestimate the capped quadratic loss $\phi_{\text{cap}}$. 

To simplify the notation, we will use the shorthand:
$$\bar\phi(r;d)\defeq \phi(r;0,0,1,1,d).$$
Observe that the second statement of the proposition, stating that constraints \eqref{eq:phiGen_opt} are redundant, imply Proposition~\ref{prop:relaxationHA}: at the root node, when all lower bounds are zero and upper bounds are one, the hyperplane arrangement constraints do not improve the strength of the perspective relaxation. Figure~\ref{fig:relaxation} illustrates function $r\mapsto \bar\phi(r;d)$ for different values of $d$. We observe that function $r\mapsto \bar\phi(r;d)$ underestimates the capped loss $\phi_{\text{cap}}$ and becomes a better approximation of $\phi_{\text{cap}}$ as $d\to 1/2$, and the two functions coincide for $|r|<2\sqrt{\mu d}$ and $|r|\geq \sqrt{\mu/d}$. The function $r\mapsto \bar\phi(r;d)$ is differentiable: we provide explicit forms of its derivative in Appendix~\ref{app:h_closed_form}. While function $r\mapsto \bar\phi(r;d)$ is nonconvex, we point out that $f_R$ is convex as stated in Proposition \ref{prop:fR_separable} because the presence of the strongly convex term $\frac{\lambda}{2}\|\bm{\beta}\|_2^2$ offsets the nonconvexities introduced by $\bar\phi$.

\begin{figure}[!h]
    \centering
    \includegraphics[width=\linewidth]{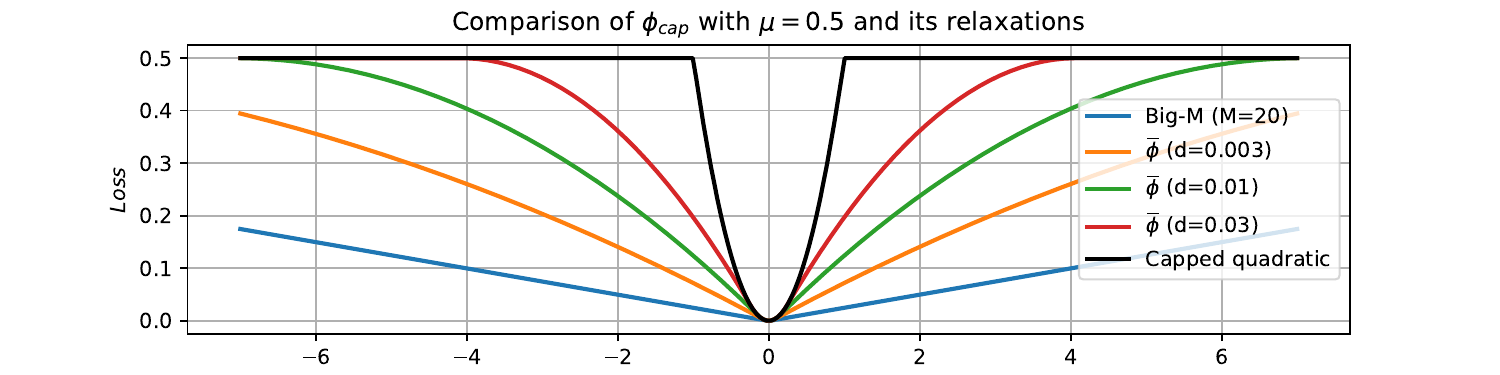}
    \caption{Comparison of loss functions for a single residual $r$: the original least trimmed loss $\phi_{\text{cap}}$ (before relaxation), the perspective relaxation $\bar \phi(r;d)$, and the analogous one-dimensional function obtained by applying the same projection argument to the standard Big-M relaxation of~\eqref{eq:LTS_perspHA} in place of the perspective relaxation. The perspective relaxation provides a tighter convex approximation than the Big-$M$ relaxation.}
    \label{fig:relaxation}
\end{figure}


We conclude this section by deriving explicit closed-form expressions for $\phi$ at the remaining bound configurations.
We also construct differentiable lower bounds for $\phi$:
as the closed-form expressions involve hard indicator constraints on the residual $r$, which effectively turn the minimization of relaxation \eqref{eq:perspProjected} into a constrained optimization problem, preventing direct application of gradient descent. To avoid this, we employ the augmented Lagrangian construction \citep{bertsekas1982constrained}. Concretely, for a constrained problem of the form $\min_{r \in \mathbb{R}} g(r)$ subject to $c_j(r) \leq 0$ for $j = 1, \dots, m$, the corresponding augmented Lagrangian with multipliers $\bm{\nu} \in \mathbb{R}_+^m$ and penalty parameter $\rho \geq 0$ is $\mathcal{L}_\rho(r; \bm{\nu}) = g(r) + \sum_{j=1}^m \nu_j c_j(r) + \frac{\rho}{2}\sum_{j=1}^m [c_j(r)]_+^2$, where $[\,\cdot\,]_+ \defeq \max\{\cdot, 0\}$. Whenever $g$ and the $c_j$'s are differentiable, so is $\mathcal{L}_\rho(\cdot; \bm{\nu})$; moreover, by weak duality, $\min_{r} \mathcal{L}_\rho(r; \bm{\nu}) \leq \min_{r}\{g(r) : c_j(r) \leq 0,\, \forall j\}$ for any $\bm{\nu} \geq \bm{0}$ and $\rho \geq 0$, so that $\mathcal{L}_\rho(\cdot; \bm{\nu})$ serves as a differentiable lower-bounding function for the constrained problem. Applying this construction to each of the constrained forms of $\phi$ that arise during branching yields the following proposition.

\begin{proposition}\label{prop:perspectiveLossSimple} Let $\delta_S(\cdot)$ denote the indicator function of a set $S$, taking value $0$ on $S$ and $+\infty$ elsewhere. For any $d > 0$ and any $\nu, \nu_1, \nu_2, \rho \geq 0$, the function $\phi$ defined in~\eqref{eq:phiGen} admits the following closed-form expressions, each accompanied by a differentiable lower bound (in $r$) obtained from the augmented Lagrangian construction described above:

\noindent $\bullet$ $\phi(r;0,0,0,0,d)=\frac{1}{2}r^2+\delta_{[-\sqrt{2\mu},\sqrt{2\mu}]}(r)$ and a differentiable lower bound is given by 
$$\bar\phi_{0}(r;\nu_1,\nu_2,\rho)=\frac{1}{2}r^2+\nu_1\left(-r-\sqrt{2\mu}\right)+\nu_2\left(r-\sqrt{2\mu}\right)+\frac{\rho}{2}\left(\left[-r-\sqrt{2\mu}\right]_+^2+\left[r-\sqrt{2\mu}\right]_+^2\right).$$

\noindent $\bullet$ $\phi(r;1,0,1,0,d)=\mu+\delta_{[-\infty,-\sqrt{2\mu}]}(r)$ and a differentiable lower bound is given by 
$$\bar\phi_{-}(r;\nu,\rho)=\mu+\nu\left(r+\sqrt{2\mu}\right)+\frac{\rho}{2}\left[r+\sqrt{2\mu}\right]_+^2.$$

\noindent $\bullet$ $\phi(r;0,1,0,1,d)=\mu+\delta_{[\sqrt{2\mu},\infty]}(r)$ and a differentiable lower bound is given by 
$$\bar\phi_{+}(r;\nu,\rho)=\mu+\nu\left(-r+\sqrt{2\mu}\right)+\frac{\rho}{2}\left[-r+\sqrt{2\mu}\right]_+^2.$$
\end{proposition}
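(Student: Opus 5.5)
The plan is to evaluate \eqref{eq:phiGen} directly in each of the three bound configurations. When the bounds fix $(z^-,z^+)$ at integer values, the perspective terms collapse, so that case reduces to a one-variable problem in $w^\pm$. The lower-bounding claims then follow from the weak duality property of the augmented Lagrangian recalled just before the statement.

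\emph{Case $\ell=u=0$.} Here $z^-=z^+=0$. Under the convention $x/0=\infty$ for $x>0$, finiteness forces $w^-=w^+=0$. The objective becomes $\frac12 r^2$ (the term $-d(w^+-w^-)^2$ vanishes). The only remaining constraint is \eqref{eq:phiGen_opt}, which reads $|r|\le\sqrt{2\mu}$ and therefore contributes the indicator $\delta_{[-\sqrt{2\mu},\sqrt{2\mu}]}(r)$. Writing this as the two constraints $c_1(r)=-r-\sqrt{2\mu}\le 0$ and $c_2(r)=r-\sqrt{2\mu}\le0$ with $g(r)=\frac12r^2$, the augmented Lagrangian is exactly $\bar\phi_0$.

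\emph{Case $(\ell^-,\ell^+,u^-,u^+)=(1,0,1,0)$.} Here $z^-=1$ and $z^+=0$, so $w^+=0$. The constraint $z^-+z^+\le1$ holds with equality, so \eqref{eq:phiGen_opt} gives $|r+w^-|\le 0$, i.e.\ $w^-=-r$. The constraint $w^-\ge\sqrt{2\mu}$ then becomes $r\le-\sqrt{2\mu}$, and $w^-\ge0$ is automatically satisfied. The objective is $0+\mu+d(w^-)^2-d(w^-)^2=\mu$, where the perspective term cancels exactly against $-d(w^+-w^-)^2$. This yields $\mu+\delta_{(-\infty,-\sqrt{2\mu}]}(r)$. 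With $c(r)=r+\sqrt{2\mu}$ the augmented Lagrangian is $\bar\phi_-$. The case $(0,1,0,1)$ is symmetric: $w^+=r$, the constraint becomes $r\ge\sqrt{2\mu}$, and $c(r)=-r+\sqrt{2\mu}$, giving $\bar\phi_+$.

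\emph{Lower bounds.} For each case I will verify the lower bound pointwise rather than through minima, since that is what is used in \eqref{eq:perspProjected}. If $r$ is feasible, then $c_j(r)\le0$ gives $\nu_jc_j(r)\le0$ and $[c_j(r)]_+=0$, so $\mathcal L_\rho(r;\nu)\le g(r)=\phi(r)$. If $r$ is infeasible, then $\phi(r)=+\infty$ and the inequality is trivial. Differentiability follows because $[\cdot]_+^2$ is $C^1$ and every other term is smooth.

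The only delicate step is the bookkeeping in the fixed-outlier cases. I need to check that constraint \eqref{eq:phiGen_opt} with right-hand side $0$ pins $w^\mp$ to $\mp r$, and that the $-d(w^+-w^-)^2$ correction cancels the perspective term exactly. I do not expect any genuine obstacle.
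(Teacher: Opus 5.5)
Your proposal is correct and follows the paper's argument (Cases 1--3 of the appendix proof). In each case you fix $(z^-,z^+)$ from the bounds and let the perspective terms pin $w^\pm$. In the outlier cases you then use \eqref{eq:phiGen_opt} with zero right-hand side to force $w=r$, read off the indicator, and identify each $\bar\phi$ as the augmented Lagrangian of the resulting constrained form.

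The differences are minor. You work directly in $(w^\pm,z^\pm)$ rather than first passing through the complementarity lemma and the reduced form \eqref{eq:phiGen2}. You also verify the lower bound pointwise in $r$, instead of citing min-level weak duality; the pointwise form is what is actually needed when these terms are summed in \eqref{eq:Lagnode}.
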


The functions $\bar\phi_0, \bar\phi_-, \bar\phi_+$ play a key role in the remainder of the paper: in Section~\ref{subsect:overview}, they are used to construct the augmented Lagrangian of the full node relaxation~\eqref{eq:perspProjected}, and in Section~\ref{sec:dualbound} this Lagrangian is in turn the objective minimized by our augmented Lagrangian method (ALM) for computing dual bounds at each BnB node.

\subsection{Description of the BnB method}\label{subsect:overview}

We now describe the main components of our custom branch-and-bound algorithm.

\noindent \textbf{Root node} The BnB process begins by solving the interval relaxation of problem \eqref{eq:LTS_perspHA} at the root node. From \eqref{eq:perspProjected} we find that the relaxation is given by 
\begin{equation}\label{eq:root}
\min_{\bm{\beta}\in \R^p} \frac{\lambda}{2}\|\bm{\beta}\|_2^2+\sum_{i=1}^n \bar\phi(y_i-\bm{x_i^\top\beta};d_i),\end{equation}
where function $\bar\phi$ is explictly described in Proposition~\ref{prop:perspectiveLoss}. All functions involved are differentiable, thus \eqref{eq:root} can be solved via standard first-order methods. In our approach, we solve it approximately via gradient descent with line search. We describe our specific implementation and the computation of valid dual bounds from approximate solutions in Section~\ref{sec:dualbound}.

\noindent \textbf{Branching variable selection} Once a root (or node) relaxation has been solved, typical branch-and-bound algorithms for mixed-integer optimization would then select a fractional integer variable to branch on. In our case, the binary variables are not defined explicitly, but appear implicitly in the definition of function $\phi$ in \eqref{eq:phiGen}. In particular, function $\bar\phi(\cdot; d)$ underestimates the capped loss $\phi_{\text{cap}}(\cdot)$, and is a strict underestimator if and only if the binary variables appearing in \eqref{eq:phiGen} are fractional. Thus, given an optimal solution $\bm{\bar \beta}\in \R^p$ to Problem~\eqref{eq:root}, the algorithm selects an index $i\in [n]$ such that $\bar\phi(y_i-\bm{x_i^\top\bar \beta};d_i)<\phi_{\text{cap}}(y_i-\bm{x_i^\top\bar \beta})$.

\noindent \textbf{Branching scheme} The definition of $\phi$ in Problem~\eqref{eq:phiGen} involves two binary variables ($z^-$ and $z^+$) and three feasible configurations ($z^-=z^+=0$; $z^-=1$, $z^+=0$; and $z^-=0$, $z^+=1$; note that $z^-=z^+=1$ is infeasible). Thus, instead of using the standard variable dichotomy for branching, we use three-way branching for our custom branch-and-bound. \begin{wraptable}{r}{0.5\textwidth}
    \centering
    \vspace{-0.7cm}
    \begin{tabular}{|c|c|}
        \hline
        Disjunction & Replace $\bar\phi(y_i-\bm{x_i^\top\beta};d_i)$ with... \\ \hline
        $z_i^-=0,z_i^+=0$ & $\bar\phi_{0}(\cdot;\nu_1,\nu_2,\rho)$ \\ 
         $z_i^-=1,z_i^+=0$ & $\bar\phi_{-}(\cdot;\nu,\rho)$ \\ 
          $z_i^-=0,z_i^+=1$ & $\bar\phi_{+}(\cdot;\nu,\rho)$ \\ \hline
    \end{tabular}
    \caption{Subproblem construction}
    \vspace{-5mm}
    \label{tabl:subproblem}
\end{wraptable} After fixing the branching variables, the child node's relaxation objective is obtained from the parent's by replacing $\bar\phi(y_i-\bm{x_i^\top\bar \beta};d_i)$ in the parent node's objective with the appropriate differentiable lower bounds described in Proposition~\ref{prop:perspectiveLossSimple}, as summarized in Table~\ref{tabl:subproblem}.  

\noindent \textbf{Recursion}  The algorithm proceeds recursively, applying the same branching scheme to each newly created node. To describe the relaxation solved at a generic node, let $\mathcal{F}_0, \mathcal{F}_-, \mathcal{F}_+ \subseteq [n]$ denote the disjoint sets of indices recording the branching decisions made along the path from the root to the current node: $i \in \mathcal{F}_0$ if $z_i^- = z_i^+ = 0$ has been fixed, $i \in \mathcal{F}_-$ if $z_i^- = 1$ has been fixed, and $i \in \mathcal{F}_+$ if $z_i^+ = 1$ has been fixed. Following the construction summarized in Table~\ref{tabl:subproblem}, the term $\bar\phi(\cdot; d_i)$ associated with each branched index $i$ is replaced by the corresponding differentiable lower bound from Proposition~\ref{prop:perspectiveLossSimple}, parameterized by dual multipliers $\bm{\nu}$ and a penalty parameter $\rho \geq 0$. This yields the node objective
\begin{align}\label{eq:Lagnode}
L_\rho(\bm{\beta}, \bm{\nu})\defeq &\frac{\lambda}{2}\|\bm{\beta}\|_2^2+\sum_{i\in \mathcal{F}_0} \bar\phi_{0}(y_i-\bm{x_i^\top\beta};(\nu_i)_1,(\nu_i)_2,\rho)+\sum_{i\in \mathcal{F}_-} \bar\phi_{-}(y_i-\bm{x_i^\top\beta};\nu_i,\rho)\notag\\
&+\sum_{i\in \mathcal{F}_+} \bar\phi_{+}(y_i-\bm{x_i^\top\beta};\nu_i,\rho)
+\sum_{i\not\in (\mathcal{F}_0\cup \mathcal{F}_-\cup \mathcal{F}_+)} \bar\phi(y_i-\bm{x_i^\top\beta};d_i).
\end{align}
where $\bm{\nu} \in \mathbb{R}_+^{2|\mathcal{F}_0| + |\mathcal{F}_-| + |\mathcal{F}_+|}$ collects all dual multipliers associated with the branched indices. By Proposition~\ref{prop:perspectiveLossSimple}, each function $\bar\phi_{0}, \bar\phi_{-}, \bar\phi_{+}$ is precisely the augmented Lagrangian of the corresponding constrained function $\phi$. Consequently, $L_\rho(\bm{\beta}, \bm{\nu})$ is the augmented Lagrangian of~\eqref{eq:perspProjected} in which the bounds $(\bm{\ell^\pm}, \bm{u^\pm})$ encode the branching decisions in $\mathcal{F}_0, \mathcal{F}_-, \mathcal{F}_+$. Since the objective in~\eqref{eq:perspProjected} is convex (in $\bm{\beta}$) and all constraints are linear, standard augmented Lagrangian duality \citep{bertsekas1982constrained, rockafellar1976augmented} guarantees that
\begin{equation}\label{eq:node}
    \bar\zeta =  \max_{\bm{\nu} \geq \bm{0}} \min_{\bm{\beta}} L_{\rho}(\bm{\beta}, \bm{\nu}),
\end{equation}
where $\bar\zeta$ denotes the optimal value of the node relaxation \eqref{eq:perspProjected}. Solving the node thus reduces to (approximately) solving this max-min problem; the procedure is described in detail in the next subsection, and produces a primal iterate $\bm{\bar\beta}$ together with a valid lower bound $\bar\zeta'$ on $\bar\zeta$. The node is then processed according to the standard BnB rules: if $\bar\zeta'$ exceeds the current incumbent, the node is pruned by bounding; if $\bar\phi(y_i - \bm{x_i^\top \bar\beta}; d_i) = \phi_{\text{cap}}(y_i - \bm{x_i^\top \bar\beta})$ for all $i \in [n] \setminus (\mathcal{F}_0 \cup \mathcal{F}_- \cup \mathcal{F}_+)$, the node is pruned by integrality and the incumbent is updated (in our implementation, we adopt a best-bound strategy, so this case arises only at the final node explored, if at all); otherwise, an index $i \in [n] \setminus (\mathcal{F}_0 \cup \mathcal{F}_- \cup \mathcal{F}_+)$ with $\bar\phi(y_i - \bm{x_i^\top \bar\beta}; d_i) < \phi_{\text{cap}}(y_i - \bm{x_i^\top \bar\beta})$ is selected for branching, and the recursion continues.


\begin{remark}
Since our BnB implementation does not solve the node relaxation problem to optimality (due to the nature of the first order methods which can be slow to obtain high accuracy solutions), the first condition of Proposition~\ref{prop:complexity} is no longer satisfied. As a consequence, the proposed algorithm is no longer guaranteed to explore at most $\mathcal{O}(n^{p+1})$ nodes. In practice however, when optimally solving the convex interval relaxation at a node becomes expensive, our approach is typically still able to prune the node by bounding, so the total number of nodes explored remains largely unaffected. This is confirmed empirically in Tables~\ref{tab:benchmarks_nodes1}--\ref{tab:benchmarks_nodes2} of Section~\ref{sec:experiments}, which compare the node counts of our solver against those of Gurobi applied directly to Problem~\eqref{eq:LTS_perspHA}.
\end{remark}

\subsection{Node relaxations and dual bounds}\label{sec:dualbound}
At each node of the branch-and-bound algorithm, we compute dual bounds for Problem~\eqref{eq:node}. Observe that for any fixed $\bm{\bar \nu}$
\begin{align}\label{eq:fixedMultiplier}\bar\zeta(\bm{\bar\nu})=\min_{\bm{\beta}}L_\rho(\bm{\beta;\bm{\bar\nu}})
\end{align}
is a lower bound on $\bar\zeta$. Function
$L_\rho(\bm{\beta};\bm{\nu})$ is differentiable as a function of $\bm{\beta}$, and the gradient is
\begin{align}
\nabla_{\bm{\beta}}L_\rho(\bm{\beta}, \bm{\nu})&= \lambda\bm{\beta}-\sum_{i\in \mathcal{F}_0}\bm{x_i} \bar\phi_{0}'(y_i-\bm{x_i^\top\beta};(\nu_i)_1,(\nu_i)_2,\rho)
-\sum_{i\in \mathcal{F}_-}\bm{x_i} \bar\phi_{-}'(y_i-\bm{x_i^\top\beta};\nu_i,\rho)\notag\\
&-\sum_{i\in \mathcal{F}_+}\bm{x_i} \bar\phi_{+}'(y_i-\bm{x_i^\top\beta};\nu_i,\rho)
-\sum_{i\not\in (\mathcal{F}_0\cup \mathcal{F}_-\cup \mathcal{F}_+)}\bm{x_i} \bar\phi'(y_i-\bm{x_i^\top\beta};d_i) \label{eq:grad_fExplicit}
\end{align}
where functions $\bar\phi'$ $\bar\phi_0'$, $\bar\phi_-'$, $\bar\phi_+'$ denote the derivatives, which admit a closed-form expression and are continuous with respect to $\bm\beta$ (see Appendix \ref{app:h_closed_form}). In the proposed approach, we obtain a solution to \eqref{eq:fixedMultiplier} via gradient descent with line search. Note that $\bar\zeta(\bm{\bar\nu})$ is a lower bound only if the associated problem is solved to optimality. However, first order methods can be slow to converge to high-precision solutions. Therefore, we need to consider methods to get reliable lower bounds from an approximate solution, as we discuss next.

We recall that for any $\tilde{\lambda}$-strongly convex function $\bm\beta\mapsto \Phi(\bm\beta)$ where $\Phi(\bm\beta)$ is differentiable, the optimal value satisfies
\begin{equation}\label{eq:grad_gap}
\min_{\bm{\beta}} \Phi(\bm{\beta}) \geq \Phi(\bm{\bar\beta}) - \frac{1}{2\tilde{\lambda}} \|\nabla \Phi(\bm{\bar\beta})\|_2^2
\end{equation}
for any point $\bm{\bar\beta}$. As mentioned in Section~\ref{sec:persp}, the perspective reformulation and relaxations can be ensured to be $\tilde{\lambda}$-strongly convex for any $\tilde\lambda\in (0,\lambda)$, see \eqref{eq:minEigenvalue} (note that larger values of the strong convexity parameter $\tilde\lambda$ come at the expense of weaker convex relaxations and thus less effective pruning of the BnB algorithm). Using this observation, we obtain the following proposition.

\begin{proposition}\label{thm:dual_bound}
Let $({\bm{\bar \beta}},\bm{\bar\nu})$ be any primal-dual pair of Problem \eqref{eq:node}, then the inequality 
\begin{equation}\label{eq:dual_bound}
\bar\zeta(\bm{\bar\nu}) \geq L_\rho({\bm{\bar\beta}}, \bm{\bar\nu}) - \frac{\|\nabla_{\bm{\beta}} L_\rho({\bm{\bar\beta}}, \bm{\bar\nu})\|_2^2}{2\tilde{\lambda}}
\end{equation}
holds true.
\end{proposition}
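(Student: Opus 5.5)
The plan is to apply the generic strong-convexity bound \eqref{eq:grad_gap} with $\Phi(\bm\beta)=L_\rho(\bm\beta,\bm{\bar\nu})$. Since $\bar\zeta(\bm{\bar\nu})=\min_{\bm\beta}L_\rho(\bm\beta,\bm{\bar\nu})$ by \eqref{eq:fixedMultiplier}, inequality \eqref{eq:dual_bound} follows once two facts are in place: $\bm\beta\mapsto L_\rho(\bm\beta,\bm{\bar\nu})$ is differentiable, and it is $\tilde\lambda$-strongly convex.

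\textbf{Differentiability.} This follows from Proposition~\ref{prop:perspectiveLossSimple} and the closed form of $\bar\phi$ in Proposition~\ref{prop:perspectiveLoss}.
\begin{itemize}
\item The terms $\bar\phi_0,\bar\phi_-,\bar\phi_+$ are sums of quadratics, affine terms and squared positive parts $[\cdot]_+^2$, which are $C^1$.
\item $\bar\phi(\cdot;d)$ is $C^1$ because the pieces of \eqref{eq:phi_closed_form0} match in value and slope at $|r|=2\sqrt{\mu d}$ and $|r|=\sqrt{\mu/d}$.
\end{itemize}
Composing with the affine maps $\bm\beta\mapsto y_i-\bm{x_i^\top\beta}$ then gives the gradient formula \eqref{eq:grad_fExplicit}.

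\textbf{Strong convexity.} This is the main step. The approach is to show that the $\tilde\lambda$-strong convexity of the node relaxation $f_R$ is inherited by its augmented Lagrangian, in three stages.

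\emph{Stage 1: strong convexity of $f_R$.} With $\bm d$ chosen as in \eqref{eq:minEigenvalue}, the matrix $\widetilde{\bm\Sigma}_{\bm d}-\mathrm{Diag}(\tilde\lambda\bm I_p,\bm 0)$ is positive semidefinite. Hence the objective of \eqref{eq:LTS_perspHA_relax} minus $\frac{\tilde\lambda}{2}\|\bm\beta\|_2^2$ is jointly convex in $(\bm\beta,\bm w,\bm z,\bm w^\pm,\bm z^\pm)$. The perspective terms are convex, and the constraints \eqref{eq:LTS_perspHA_linear}--\eqref{eq:LTS_perspHA_equalities} together with the bounds define a convex set. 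Partially minimizing a jointly convex function over a convex set preserves convexity, so $f_R(\cdot)-\frac{\tilde\lambda}{2}\|\cdot\|_2^2$ is convex.

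\emph{Stage 2: replacing constraints by Lagrangian terms.} The node objective $L_\rho(\cdot,\bm{\bar\nu})$ is obtained from \eqref{eq:perspProjected} by replacing each constrained branched term $\phi(r;\ell,u,d)$ by its augmented-Lagrangian counterpart. Each such replacement adds only terms that are convex in $r$:
\begin{itemize}
\item $\nu c_j(r)$ with $\nu\ge0$ and $c_j$ affine;
\item $\frac{\rho}{2}[c_j(r)]_+^2$ with $\rho\ge0$.
\end{itemize}
The indicator functions are simply dropped, and removing a convex constraint from a convex function keeps it convex.

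\emph{Stage 3: the delicate part.} Stage 2 assumes the unconstrained part remains convex after the indicators are removed. I would argue this directly. Write
\[
L_\rho(\bm\beta,\bm{\bar\nu})=\frac{\tilde\lambda}{2}\|\bm\beta\|_2^2+\Big[\frac{\lambda-\tilde\lambda}{2}\|\bm\beta\|_2^2+\sum_{i\ \text{unbranched}}\bar\phi(y_i-\bm{x_i^\top\beta};d_i)\Big]+(\text{convex branched terms}),
\]
using that the branched quadratic parts $\frac12 r^2$ and the constants $\mu$ are convex.

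The bracketed term is convex. This follows by applying the Stage 1 argument to the relaxation in which the unbranched indices keep $\ell=0,u=1$ and the branched indices are given the trivial choice $w_i=0,z_i=0$ with no constraint, or more simply by noting that $\bar\phi(r;d)+\frac{d}{1-2d}\cdot$ curvature is controlled. Concretely:
\begin{itemize}
\item By Proposition~\ref{prop:perspectiveLoss}, $\bar\phi''(r)\ge -\frac{2d}{1-2d}$ on the middle piece.
\item The Schur-complement condition \eqref{eq:minEigenvalue} gives $\sum_i \frac{2d_i}{1-2d_i}\bm{x_i x_i^\top}\preceq(\lambda-\tilde\lambda)\bm I$ over any subset of indices.
\end{itemize}
I expect this curvature bookkeeping, i.e.\ verifying that the Schur-complement condition for the full index set implies it for every subset, to be the main obstacle. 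It holds because dropping rows only decreases $\bm X^\top\bm X$-type terms in the Loewner order.

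With $\tilde\lambda$-strong convexity and differentiability established, \eqref{eq:grad_gap} applied at $\bm{\bar\beta}$ yields \eqref{eq:dual_bound}.
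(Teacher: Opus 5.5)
Your proof is correct and has the same skeleton as the paper's. Once $\bm\beta\mapsto L_\rho(\bm\beta,\bm{\bar\nu})$ is known to be differentiable and $\tilde\lambda$-strongly convex, \eqref{eq:dual_bound} is the gradient--suboptimality inequality at $\bm{\bar\beta}$.

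Where you differ is in how you justify strong convexity. The paper settles it in one sentence: $f_R$ is $\tilde\lambda$-strongly convex by the choice of $\bm d$, and $L_\rho$ arises from it by dropping constraints and adding convex piecewise quadratics. As you note at the start of Stage~3, that inheritance is not automatic. Convexity of $g+\delta_C$ says nothing about $g$ off $C$, and here the surviving part $\frac{\lambda}{2}\|\bm\beta\|_2^2+\sum_{i\notin\mathcal F}\bar\phi(y_i-\bm{x_i^\top\beta};d_i)$ contains nonconvex losses.

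Your concrete argument closes this gap. First, $\bar\phi(\cdot;d)+\frac{d}{1-2d}(\cdot)^2$ is convex, because $\bar\phi$ is $C^1$ with piecewise-constant second derivative in $\{1,-\frac{2d}{1-2d},0\}$. Second, the bound $\sum_i\frac{2d_i}{1-2d_i}\bm{x_i}\bm{x_i}^\top\preceq(\lambda-\tilde\lambda)\bm I$ trivially survives deleting indices. The paper's route is shorter and avoids the closed form of $\bar\phi$; yours actually verifies the step the paper asserts.

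Do write out the Loewner bound. Condition \eqref{eq:minEigenvalue} is the Schur complement of $\begin{pmatrix}\bm X^\top\bm X+(\lambda-\tilde\lambda)\bm I & -\bm X^\top\\ -\bm X & \bm I-2\mathrm{Diag}(\bm d)\end{pmatrix}$ with respect to its $\bm\beta$-block. Complementing out the $\bm w$-block instead is legitimate because $0<d_i<1/2$. It gives $(\lambda-\tilde\lambda)\bm I+\bm X^\top\bigl(\bm I-(\bm I-2\mathrm{Diag}(\bm d))^{-1}\bigr)\bm X=(\lambda-\tilde\lambda)\bm I-\sum_i\frac{2d_i}{1-2d_i}\bm{x_i}\bm{x_i}^\top\succeq\bm 0$.

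Two parts of your write-up should be removed or fixed, though neither carries weight.

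\emph{Stage~2.} The claim that removing a convex constraint from a convex function keeps it convex is false in general; take $-x^2+\delta_{\{0\}}(x)$. Stage~3 supersedes it.

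\emph{First alternative in Stage~3.} Giving the branched indices $w_i=0$, $z_i=0$ only proves convexity of your bracket plus $\sum_{i\in\mathcal F}\frac12 r_i^2$. That is fine for $i\in\mathcal F_0$, since $\bar\phi_0$ contains $\frac12 r_i^2$, but $\bar\phi_\pm$ have no such term, so it fails for $\mathcal F_\pm$. A structural version that works for every branched index is this: fix $z_i=1$ and set $w_i$ to the affine function of $\bm\beta$ that makes the $i$th squared residual vanish. This affine restriction of the jointly convex perspective objective contributes only the constant $\mu$. Partial minimization over the unbranched $(w_i,z_i)$ then leaves exactly your bracket plus a constant.
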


The proof of Proposition~\ref{thm:dual_bound} is provided in Appendix \ref{app:dual_bound_proof}. Using Proposition~\ref{thm:dual_bound}, we can compute dual bounds for the convex relaxations in the BnB algorithm efficiently from any iterate of a first-order method and its gradient, even when the relaxation is solved approximately.

Building on this result, we solve Problem~\eqref{eq:node} via the augmented Lagrangian method, which alternates between minimizing $L_\rho(\bm{\beta}, \bm{\nu})$ over the primal variables $\bm{\beta}$ with multipliers $\bm{\nu}$ fixed, and updating $\bm{\nu}$. In the primal step, we minimize $L_\rho(\bm{\beta}, \bm{\nu})$ over $\bm{\beta}$ using gradient descent with an Armijo backtracking line search, as detailed in Algorithm~\ref{alg:gd}. The complete augmented Lagrangian procedure for solving~\eqref{eq:node} is summarized in Algorithm~\ref{alg:alm}. Here, Proposition~\ref{thm:dual_bound} is applied at each iteration of both the inner and outer loops to obtain a valid dual bound from the current iterate. As the multipliers $\bm{\nu}$ are progressively updated through the outer loop, the quality of the resulting dual bounds improves.


\begin{algorithm}[!h]
\begin{algorithmic}[1]
\REQUIRE Initial point $\bm{\beta}$, dual values $\bm{\nu}$, initial step size $t_0$, Armijo parameter $\alpha \in (0, 1)$, backtracking factor $\gamma \in (0, 1)$, tolerance $\epsilon$
\STATE $t \leftarrow t_0$
\REPEAT
    \STATE Compute $ \bm{g}=\nabla_{\bm{\beta}} L_\rho(\bm{\beta}, \bm{\nu})$
    \WHILE{$L_\rho(\bm{\beta} - t \bm{g}, \bm{\nu}) > L_\rho(\bm{\beta}, \bm{\nu}) - \alpha t \|\bm{g}\|_2^2$}
        \STATE $t \leftarrow \gamma t$
    \ENDWHILE
    \STATE $\bm{\beta} \leftarrow \bm{\beta} - t \bm{g}$
    \STATE $t \leftarrow t / \gamma$ \hfill $\triangleright$ warm start for next iteration
\STATE Compute dual bound $D$ using Proposition \ref{thm:dual_bound}
\UNTIL{$(L_\rho(\bm{\beta}, \bm{\nu}) - D) / L_\rho(\bm{\beta}, \bm{\nu}) \leq \epsilon$}
\RETURN $\bm{\beta}$
\end{algorithmic}
\caption{Gradient descent with Armijo backtracking for minimizing $L_\rho(\cdot, \bm{\nu})$}
\label{alg:gd}
\end{algorithm}

\begin{algorithm}[!h]
\begin{algorithmic}[1]
\REQUIRE Initial solution $\bm{\beta}^{(0)}$, multipliers $\bm{\nu}^{(0)} \geq \bm{0}$, penalty $\rho > 0$, tolerances $\epsilon_{\text{in}}, \epsilon_{\text{out}}$
\FOR{$k = 0, 1, 2, \ldots$}
    \STATE $\bm{\beta}^{(k+1)} \leftarrow$ minimize $L_\rho(\bm{\beta}, \bm{\nu}^{(k)})$ using Algorithm \ref{alg:gd} with tolerance $\epsilon_{\text{in}}$, starting from $\bm{\beta}^{(k)}$
    \STATE Update dual multipliers: $\nu_j^{(k+1)} = \max\{0, \nu_j^{(k)} + \rho \, c_j(\bm{\beta}^{(k+1)})\}$ for all $j \in \mathcal{C}$
    \STATE \hspace{\algorithmicindent} $\nu_i^{(k+1)} = \max\left\{0, \nu_i^{(k)} + \rho \left(y_i-\bm{x_i^\top}\bm{\beta}^{(k+1)}+\sqrt{2\mu}\right)\right\}$ for all $i \in \mathcal{F}_-$
    \STATE \hspace{\algorithmicindent} $\nu_i^{(k+1)} = \max\left\{0, \nu_i^{(k)} + \rho \left(-y_i+\bm{x_i^\top}\bm{\beta}^{(k+1)}+\sqrt{2\mu}\right)\right\}$ for all $i \in \mathcal{F}_+$
    \STATE \hspace{\algorithmicindent} $(\nu_i)_1^{(k+1)} = \max\left\{0, (\nu_i)_1^{(k)} + \rho \left(-y_i+\bm{x_i^\top}\bm{\beta}^{(k+1)}-\sqrt{2\mu}\right)\right\}$ for all $i \in \mathcal{F}_0$
    \STATE \hspace{\algorithmicindent} $(\nu_i)_2^{(k+1)} = \max\left\{0, (\nu_i)_2^{(k)} + \rho \left(y_i-\bm{x_i^\top}\bm{\beta}^{(k+1)}-\sqrt{2\mu}\right)\right\}$ for all $i \in \mathcal{F}_0$
    \STATE Compute dual bound $D^{(k+1)}$ using Proposition~\ref{thm:dual_bound}
    \IF{$(D^{(k+1)} - D^{(k)}) / D^{(k)} \leq \epsilon_{\text{out}}$} 
        \STATE \textbf{break}
    \ENDIF
\ENDFOR
\RETURN $\bm{\beta}^{(k+1)}$, dual bound $D^{(k+1)}$
\end{algorithmic}
\caption{Augmented Lagrangian Method for solving the relaxation problem}
\label{alg:alm}
\end{algorithm}

\subsection{Implementation Details}\label{subsect:implementation}

We discuss several practical techniques to make our proposed BnB approach computationally efficient. Additionally, we discuss how our work differs from L0BnB \citep{hazimeh2021sparse}, who also use a custom BnB framework with fist order methods to solve the node relaxations for the sparse linear regression which is different from the robust statistics problem we consider here.  

\noindent \textbf{BnB configuration} 
For node selection, we employ a best-bound search strategy \citep{linderoth1999computational}, which selects the open node with the smallest lower bound as the next node to explore. For selecting the branching variable, we branch on the most fractional variable. Note that the integer variables $\bm{z^-},\bm{z^+}$ are not explicitly defined, but can be computed implicitly:
choosing the index $i$ whose $z_i^-+z_i^+$ is closest to $1/2$ is equivalent to choosing
\begin{equation*}
i^\star \in \arg\min_i \left|\,|r_i| - r_0\,\right|,
\quad \text{where}\,\,\,
r_0 := \Bigl(\tfrac12 + d\Bigr)\sqrt{\frac{\mu}{d}},\quad r_i=y_i-\bm{x_i^\top\beta}.
\end{equation*}
Equivalently, since $\phi(r)$ is strictly increasing in $|r|$ when $|r|\in[0, \sqrt{\mu/d}]$,
we find
\begin{equation*}
i^\star \in \arg\min_i \left|\,\phi(r_i) - \phi_0\,\right|,
\qquad
\phi_0 := \phi(r_0)=\frac{\mu\left(\tfrac34-d-d^2\right)}{1-2d}.
\end{equation*}
The BnB procedure terminates when the gap between bounds falls below a predetermined tolerance (1\% in our experiments), or when all nodes have been explored or pruned.

\noindent \textbf{Hyperparameter selection} In our implementation, we set the quadratic penalty term $\rho$ to 100 if $n\ge 300$ and equal to $5$ otherwise. The strong-convexity parameter is set to $\tilde\lambda=0.1\lambda$. We use Armijo type line search with initial step size $t_0=1$, sufficient-decrease parameter $\alpha=10^{-4}$, and backtracking factor $\gamma=0.5$. We set tolerances $\epsilon_{\text{in}}=10^{-4}$ and $\epsilon_{\text{out}}=10^{-4}$ in Algorithm~\ref{alg:alm}. We found the method to be fairly robust to the choice of hyperparameters: $\rho$ was selected from $\{1, 5, 10, 50, 100, 500\}$ on 10 synthetic datasets of various sizes; the remaining hyperparameters were not tuned.

\noindent \textbf{Warm starting} Rather than solving the node relaxation~\eqref{eq:node} from scratch, we warm start Algorithm~\ref{alg:alm} using the parent node's solution. Specifically, $\bm{\beta}^{(0)}$ is initialized to the final primal iterate of the parent node. For each dual multiplier ${\nu}^{(0)}_i$, if the corresponding constraint is also present in the parent node, we inherit its final dual value; otherwise, we set ${\nu}^{(0)}_i = 0$. Since a child node differs from its parent only by fixing a single binary variable $z_i$, the parent's solution typically provides a high-quality starting point, substantially reducing the number of iterations required for convergence in practice.

\noindent \textbf{Selective upper bound computation}
At any node of the BnB tree, we can use heuristics similar to those discussed in Section~\ref{sec:heuristics} based on alternating minimization to compute upper bounds for Problem \eqref{eq:LTS_perspHA}. We provide a precise description of the heuristics in Appendix~\ref{subsect:incumbent}.
As computing incumbent solutions at every node is expensive, we compute these upper bounds only at the root node and nodes whose depth is a multiple of a constant (10 in our experiments). This reduces computational overhead and cost while maintaining effective pruning.

\noindent \textbf{Early pruning.} Since dual bounds via Proposition~\ref{thm:dual_bound} are already computed at every iteration of Algorithms~\ref{alg:gd} and~\ref{alg:alm}, we can monitor them on the fly: as soon as the dual bound exceeds the current incumbent, we terminate the inner solver and prune the node immediately. This step is critical for the efficiency of our BnB procedure, as the original relaxation~\eqref{eq:perspProjected} at a given node may be infeasible, in which case Algorithm~\ref{alg:alm} produces a sequence of dual bounds $D^{(k)}$ diverging to infinity. Without early pruning, the algorithm would expend substantial effort driving $D^{(k)}$ toward infinity on a node that can already be safely discarded. We note that the use of heuristics at the root node guarantees that an incumbent solution is always available for this comparison.


\noindent \textbf{Accelerated gradient computation} We use Numba \citep{lam2015numba}, a just-in-time compiler for Python, to accelerate the gradient evaluation in \eqref{eq:grad_fExplicit}.

\noindent \textbf{Relation to L0BnB}
At a high level, our proposed BnB algorithm extends L0BnB
\citep{hazimeh2021sparse}---a BnB framework originally designed for
$\ell_0$-penalized least squares regression---to the outlier detection problem \eqref{eq:LTS}. Several key modifications are needed to address the distinct challenges posed by \eqref{eq:LTS}. First, we build our BnB on an enhanced MIO formulation \eqref{eq:LTS_perspHA} and derive a new form of the continuous relaxation (Section~\ref{subsect:relaxation}). Second, we develop a three-way branching scheme that efficiently encodes the status of each observation---inlier or outlier, with positive or negative residual (Section~\ref{subsect:overview}). Third, the node relaxations appearing in L0BnB are in the composite form (without constraints) and coordinate descent procedures are used to compute solutions to these problems.
In this work, the relaxation subproblems are more complicated as they have constraints, and we use 
augmented Lagrangian approaches and also introduce a new procedure for extracting valid dual bounds
(Section~\ref{sec:dualbound}).

%% file: Sections/Experiments.tex
\section{Experiments}
\label{sec:experiments}

We evaluate our proposed BnB solver against commercial MIO solvers on both synthetic and real datasets. Our experiments focus on three aspects: computational efficiency compared to existing methods, sensitivity to problem parameters, and the value of exact optimization over heuristic approaches.

\subsection{Experimental Setup}\label{sect:exp_setup}

\noindent \textbf{Solver settings:} We implement our BnB solver in Python with gradient computations accelerated using Numba \citep{lam2015numba}. We compare against Gurobi \citep{gurobi} and Mosek \citep{mosek} applied to the Big-M formulation \eqref{eq:LTS_BigM} (M), the perspective formulation \eqref{eq:LTS_persp} (P), and the strengthened perspective formulation with hyperplane-arrangement \eqref{eq:LTS_perspHA} (H). Accordingly, BnB (P) and BnB (H) denotes our solver on perspective formulation and hyperplane-arrangement formulation, respectively. And Gurobi/Mosek (M, P, H) denotes the commercial solvers on the same formulations. For all solvers, we define the relative optimality gap as $(UB - LB)/UB$, where $UB$ is the incumbent objective value and $LB$ is the best dual bound. We terminate when the gap falls below $1\%$ or when the runtime exceeds 1 hour. All experiments are conducted on a computing cluster, utilizing an AMD EPYC 9474F machine with 10 CPU cores and 40GB RAM.

\noindent \textbf{Synthetic data generation:} Following \cite{hazimeh2020fast}, we generate the data matrix $\bm{X} \in \mathbb{R}^{n \times p}$ by drawing each row from a multivariate Gaussian $\mathcal{N}(\bm{0},\bm{I}_p)$. The true coefficient vector $\bm{\beta}^{\dagger} \in \mathbb{R}^p$ is generated with entries drawn uniformly from $[0, 1]$. We generate clean responses as $\bm{y}_{\text{clean}} = \bm{X} \bm{\beta}^{\dagger} + \bm{\varepsilon}$, where $\bm{\varepsilon} \sim \mathcal{N}(\bm{0}, \sigma^2 \bm{I}_n)$ and $\sigma$ is chosen to achieve a target signal-to-noise ratio $\mathrm{SNR} = \mathrm{Var}(\bm{X} \bm{\beta}^{\dagger}) / \sigma^2$.

To introduce outliers, we contaminate a subset of observations by adding heavy-tailed noise to their responses. Specifically, for each contaminated observation $i$, we set $y_i = y_{\text{clean},i} + \delta \cdot \sigma_y \cdot Z_i$, where $\sigma_y = \mathrm{std}(\bm{y}_{\text{clean}})$, $\delta$ controls the outlier magnitude, and $Z_i$ is drawn from a $t$-distribution with 3 degrees of freedom. Unless otherwise specified, we set  $\mathrm{SNR} = 50$, $\delta = 10$, and introduce 10 outliers in each dataset.

\noindent \textbf{Real datasets.} We evaluate solver performance on 13 benchmark regression datasets: 7 smaller datasets that have been used in prior work on the LTS problem \citep{gomez2025outlier} and 6 larger datasets obtained from the OpenML repository \citep{vanschoren2014openml}. The dimensions of each dataset are summarized in the headers of the corresponding results tables.

\noindent \textbf{Parameter selection:} We set the ridge regularization parameter as $\lambda = \lambda_0 \cdot \mathrm{Mean}(\mathrm{Diag}(\bm{X}^\top \bm{X}))$, where $\lambda_0 = 0.01$ for synthetic datasets and $\lambda_0 = 0.2$ for real datasets, unless otherwise specified. The outlier penalty $\mu$ is chosen so that the model identifies exactly the number of outliers present in the synthetic dataset (or the specified number for real datasets).

\subsection{Comparison on Synthetic Datasets}

\subsubsection{Varying dataset size}
Table \ref{tab:solve_time_avg} compares runtime performance across different numbers of samples $n \in \{1000, 2000, 5000\}$ and features $p \in \{10, 20, 50\}$. The results reveal several key findings. 

\begin{table}[!h]
\centering
\resizebox{\textwidth}{!}{
\setlength{\tabcolsep}{1.8pt}
\small
\begin{tabular}{l |c c c| c c c| c c c}
\toprule
\multirow{2}{*}{Method} & \multicolumn{3}{c|}{\(n=1000\)} & \multicolumn{3}{c|}{\(n=2000\)} & \multicolumn{3}{c}{\(n=5000\)} \\
\cmidrule(lr){2-4}\cmidrule(lr){5-7}\cmidrule(lr){8-10}
 & $p\!=\!10$ & $p\!=\!20$ & $p\!=\!50$ & $p\!=\!10$ & $p\!=\!20$ & $p\!=\!50$ & $p\!=\!10$ & $p\!=\!20$ & $p\!=\!50$ \\
\midrule
*BnB (P) & (10.0\%, 0) & (8.8\%, 1) & (7.0\%, 2) & (14.6\%, 0) & (14.3\%, 0) & (13.8\%, 0) & (17.5\%, 0) & (17.7\%, 0) & (18.4\%, 0) \\
*BnB (H) & 2.5 & 3.6 & 12.9 & 6.3 & 10.0 & 53.4 & 35.7 & 49.2 & 750.6 \\ \midrule
 Gurobi (M) & (33.3\%, 0) & (31.5\%, 0) & (25.6\%, 0) & (53.8\%, 0) & (49.9\%, 0) & (51.0\%, 0) & (65.0\%, 0) & (64.6\%, 0) & (64.2\%, 0) \\
 Gurobi (P) & (5.3\%, 2) & (4.9\%, 3) & (4.5\%, 6) & (13.7\%, 0) & (14.1\%, 0) & (15.3\%, 0) & (19.2\%, 0) & (18.8\%, 0) & (19.4\%, 0) \\
*Gurobi (H) & 358.5 & 414.0 & 766.4 & (1.5\%, 8) & 2301.2 & (6.6\%, 5) & (14.0\%, 0) & (16.1\%, 0) & (19.6\%, 0) \\ \midrule
 Mosek (M) & (32.3\%, 0) & (30.1\%, 0) & (16.9\%, 0) & (47.4\%, 0) & (47.0\%, 0) & (45.4\%, 0) & (58.9\%, 0) & (58.6\%, 0) & (58.8\%, 0) \\
 Mosek (P) & (7.5\%, 0) & (7.5\%, 0) & (4.7\%, 1) & (16.1\%, 0) & (15.4\%, 0) & (15.0\%, 0) & (19.4\%, 0) & (20.0\%, 0) & (20.9\%, 0) \\
*Mosek (H) & (8.6\%, 0) & (1.0\%, 2) & (1.8\%, 0) & (12.5\%, 0) & (11.0\%, 0) & (18.9\%, 0) & (20.1\%, 0) & (20.1\%, 0) & (21.3\%, 0) \\
\bottomrule
\end{tabular}
}
\caption{Runtime performance on synthetic datasets, averaged over 10 random seeds. Methods marked with * are proposed in this work. Methods: (M) Big-M, (P) perspective, (H) hyperplane arrangement formulation. Numeric values indicate solve time in seconds when all 10 instances are solved within 1 hour. Otherwise, we report (gap, count) where gap is the average optimality gap and count is the number of instances solved. }
\label{tab:solve_time_avg}
\end{table}

\paragraph{\textbf{Benchmarking methods}} the Big-M formulation (M) yields the weakest relaxation bounds, with optimality gaps exceeding 30\% for smaller instances and over 60\% for larger ones. The perspective formulation (P) significantly improves upon Big-M, reducing gaps to 5--20\% depending on problem size -- these results are consisting with those reported by \citet{gomez2025outlier}. 

\paragraph{\textbf{Hyperplane arrangement with off-the-shelf solvers}}

The hyperplane arrangement formulation (H) allows for more aggresive pruning of the branch-and-bound tree. When used with Gurobi, problems with $n = 1000$ can now be solved to optimality within minutes, while just using the perspective reformulation results in 5\% gaps. Gurobi is also able to solve more instances and reduce optimality gaps in problems with $n=2,000$, but the benefits seem to disappear in problems with $n=5,000$. The improvements are far less notable with Mosek, and in many cases the use of hyperplane arrangement actually degrades performance. These findings suggest that Gurobi is better equipped to handling problems with heavy constraints. 

\paragraph{\textbf{Hyperplane arrangement with the proposed BnB solver}}
The proposed BnB (H) consistently outperforms all commercial solvers, achieving up to 100$\times$ speedup over Gurobi (H) on comparable instances. This efficiency stems from our reformulation of the relaxation problem in the $\bm{\beta}$-space, which involves only $p$ variables and has low complexity dependence on $n$. For problems with $n = 1000$, the method solves instances in seconds while Gurobi requires several minutes. For the largest instances ($n = 5000$, $p = 50$), BnB (H) completes in approximately 12 minutes, whereas competing methods fail to achieve the 1\% optimality gap within the time limit. Notably, even without the additional hyperplane arrangement constraints, BnB (P) achieves comparable performance to Gurobi (P) for large $n$, confirming the method's scalability with sample size.

\paragraph{\textbf{Additional results with smaller datasets}} We also examine solver performance on smaller datasets. To maintain problem difficulty and clearly distinguish solver performance, we set $\mathrm{SNR} = n/25$ and $\delta \in \{5, 6, 9\}$ for $n \in \{50, 100, 200\}$, respectively. Table \ref{tab:solve_time_small} presents the results.

\begin{table}[ht]
\centering
\small
\begin{tabular}{l c c c c c c c c c}
\toprule
\multirow{2}{*}{Method} & \multicolumn{3}{c}{\(n=50\)} & \multicolumn{3}{c}{\(n=100\)} & \multicolumn{3}{c}{\(n=200\)} \\
\cmidrule(lr){2-4}\cmidrule(lr){5-7}\cmidrule(lr){8-10}
 & $p\!=\!5$ & $p\!=\!10$ & $p\!=\!20$ & $p\!=\!5$ & $p\!=\!10$ & $p\!=\!20$ & $p\!=\!5$ & $p\!=\!10$ & $p\!=\!20$ \\
\midrule
 Gurobi (P) & 2.5 & 2.2 & 4.2 & 21.9 & 26.3 & 19.2 & 447.0 & 87.5 & 97.0 \\
 *Gurobi (H) & 2.0 & 2.8 & 7.1 & 7.5 & 23.3 & 33.3 & 27.2 & 39.8 & 77.9 \\
 *BnB (H) & 2.0 & 3.9 & 34.5 & 10.5 & 12.5 & 11.9 & 9.5 & 9.0 & 50.0 \\
\bottomrule
\end{tabular}
\caption{Runtime performance on small datasets, averaged over 10 random seeds. Methods marked with * are proposed in this work. Numeric values indicate solve time in seconds.}
\label{tab:solve_time_small}
\end{table}

From Table \ref{tab:solve_time_small}, we observe that Gurobi (H) is highly efficient when the dataset scale is very small ($n = 50$), solving instances in 2--7 seconds and outperforming BnB (H). However, Gurobi's runtime increases as $n$ grows. For $n \geq 100$, BnB (H) achieves better performance in most of the settings considered.

\subsubsection{Ablation studies} We investigate how various problem parameters affect solver performance. We fix $n = 2000$ and $p = 20$, adopt the parameter settings described in Section~\ref{sect:exp_setup}, and vary one parameter at a time to conduct the ablation. Table \ref{tab:lambda_outlier} examines the impact of regularization strength $\lambda_0$ (the ridge is defined as $\lambda = \lambda_0 \cdot \mathrm{Mean}(\mathrm{Diag}(\bm{X}^\top \bm{X}))$) and the number of outliers, while Table \ref{tab:snr_delta} analyzes the effect of signal-to-noise ratio and outlier magnitude $\delta$.

\begin{table}[ht]
\centering
\small
\begin{minipage}{0.48\textwidth}
\centering
\begin{tabular}{l c c c}
\toprule
$\lambda_0$ & Gurobi (P) & *Gurobi (H) & *BnB (H) \\
\midrule
 0.0005 & (66.3\%, 0) & (61.7\%, 0) & (32.4\%, 0) \\
 0.001 & (57.9\%, 0) & (53.6\%, 0) & (19.1\%, 0) \\
 0.002 & (46.4\%, 0) & (43.4\%, 0) & (5.2\%, 4) \\
 0.005 & (27.8\%, 0) & (18.2\%, 1) & 34.7 \\
 0.01 & (14.3\%, 0) & (1.2\%, 8) & 10.3 \\
 0.02 & (4.9\%, 0) & 1627.2 & 4.3 \\
 0.05 & 453.3 & 731.6 & 0.9 \\
\bottomrule
\end{tabular}
\end{minipage}
\hfill
\begin{minipage}{0.48\textwidth}
\centering
\begin{tabular}{l c c c}
\toprule
\# outliers & Gurobi (P) & *Gurobi (H) & *BnB (H) \\
\midrule
 5 & (9.9\%, 3) & (1.1\%, 9) & 7.3 \\
 10 & (14.2\%, 0) & (1.0\%, 9) & 9.2 \\
 15 & (15.5\%, 0) & (5.7\%, 5) & (2.8\%, 9) \\
 20 & (18.5\%, 0) & (10.3\%, 4) & (4.7\%, 8) \\
 25 & (16.4\%, 0) & (8.8\%, 5) & (4.2\%, 8) \\
 30 & (17.5\%, 0) & (12.3\%, 4) & (4.4\%, 6) \\
\bottomrule
\end{tabular}
\end{minipage}
\caption{Impact of regularization strength (left) and number of outliers (right). Numeric values indicate solve time in seconds when all 10 instances are solved within 1 hour. Otherwise, we report (gap, count) where gap is the average optimality gap and count is the number of instances solved.}
\label{tab:lambda_outlier}
\end{table}

From Table \ref{tab:lambda_outlier}, we observe that stronger regularization ($\lambda_0 \geq 0.01$) significantly improves tractability for all methods, while weak regularization ($\lambda_0 \leq 0.002$) makes the problem harder due to weaker relaxations. The table also shows that problem difficulty increases with the number of outliers. The BnB solver handles up to 10 outliers efficiently, solving instances in under 10 seconds. As the outlier count increases beyond 15, performance degrades for all methods, though BnB (H) consistently achieves smaller optimality gaps.

\begin{table}[ht]
\centering
\small
\begin{minipage}{0.48\textwidth}
\centering
\begin{tabular}{l c c c}
\toprule
SNR & Gurobi (P) & *Gurobi (H) & *BnB (H) \\
\midrule
 5.0 & (52.7\%, 0) & (52.2\%, 0) & (45.0\%, 0) \\
 10.0 & (45.7\%, 0) & (44.8\%, 0) & (34.8\%, 0) \\
 20.0 & (32.7\%, 0) & (33.6\%, 0) & (13.1\%, 1) \\
 30.0 & (24.1\%, 0) & (18.5\%, 1) & (1.3\%, 8) \\
 40.0 & (18.4\%, 0) & (1.0\%, 9) & 18.9 \\
 50.0 & (14.3\%, 0) & (1.2\%, 8) & 9.4 \\
 100.0 & (2.3\%, 4) & 1268.1 & 3.6 \\
\bottomrule
\end{tabular}
\end{minipage}
\hfill
\begin{minipage}{0.48\textwidth}
\centering
\begin{tabular}{l c c c}
\toprule
$\delta$ & Gurobi (P) & *Gurobi (H) & *BnB (H) \\
\midrule
2.0 & (40.8\%, 0) & (40.7\%, 0) & (36.9\%, 0) \\
 4.0 & (36.9\%, 0) & (36.7\%, 0) & (31.0\%, 0) \\
 6.0 & (29.2\%, 0) & (29.4\%, 0) & (15.5\%, 0) \\
 8.0 & (22.0\%, 0) & (11.3\%, 2) & 474.4 \\
 10.0 & (14.3\%, 0) & (1.7\%, 8) & 9.3 \\
 15.0 & (2.4\%, 5) & 951.2 & 3.3 \\
\bottomrule
\end{tabular}
\end{minipage}
\caption{Impact of signal-to-noise ratio (left) and outlier magnitude (right). Numeric values indicate solve time in seconds when all 10 instances are solved within 1 hour. Otherwise, we report (gap, count) where gap is the average optimality gap and count is the number of instances solved.}
\label{tab:snr_delta}
\end{table}

Table \ref{tab:snr_delta} demonstrates that both higher SNR and larger outlier magnitudes make outliers easier to identify, thereby improving solver performance. When SNR $\leq 20$ or $\delta \leq 6$, distinguishing outliers from inliers becomes difficult, resulting in large optimality gaps for all methods. The proposed solver achieves the best performance across all parameter settings, solving most instances to optimality when SNR $\geq 30$ or $\delta \geq 8$.

\subsection{Evaluation on Real Datasets}

We report solver performance on 13 benchmark regression datasets. For each dataset, we set the outlier penalty $\mu$ so that the model identifies exactly 10 outliers. Table \ref{tab:benchmarks_real1}-\ref{tab:benchmarks_real2} presents the runtime and optimality gap results, and Table \ref{tab:benchmarks_nodes1}-\ref{tab:benchmarks_nodes2} reports the number of BnB nodes explored by each solver. The results are summarized in Figure~\ref{fig:perf_profile_real}.

\begin{table}[ht]
\centering
\small
\resizebox{0.8\textwidth}{!}{
\begin{tabular}{l c c c c c c c}
\toprule
\multirow{2}{*}{Method} & alcohol & education & food & milk & pulpfiber & radar & wagner \\
 & $(44,6)$ & $(50,4)$ & $(150,3)$ & $(86,7)$ & $(62,7)$ & $(1573,4)$ & $(63,6)$ \\
\midrule
 *BnB (P) & (28.6\%) & (14.6\%) & (7.6\%) & (10.9\%) & (29.0\%) & (6.7\%) & (14.7\%) \\
 *BnB (H) & \textbf{3.4} & 118.3 & \textbf{1.0} & 128.9 & \textbf{2.0} & \textbf{23.7} & 56.2 \\
 Gurobi (M) & 11.9 & 754.6 & (18.5\%) & 1564.5 & 16.6 & (23.5\%) & 80.0 \\
 Gurobi (P) & 25.4 & 38.5 & 39.2 & 51.8 & 7.6 & (4.4\%) & 19.3 \\
*Gurobi (H) & 5.4 & \textbf{13.6} & 3.0 & \textbf{28.8} & 10.6 & 660.2 & \textbf{12.9} \\
 Mosek (M) & 127.7 & (8.8\%) & (26.0\%) & (11.2\%) & 129.8 & (23.6\%) & 1234.2 \\
 Mosek (P) & 21.4 & 235.4 & 746.2 & 563.2 & 8.4 & (7.7\%) & 101.8 \\
 *Mosek (H) & (24.8\%) & (--\%) & (--\%) & (12.9\%) & (26.2\%) & (--\%) & (12.9\%) \\
\bottomrule
\end{tabular}
}
\caption{Solver performance on smaller real benchmark datasets with 10 outliers. Methods marked with * are proposed in this work. Dataset dimensions $(n, p)$ are shown below each name. Values in parentheses indicate the optimality gap when the solver fails to reach the 1\% tolerance within the time limit; numeric values indicate solve time in seconds. Bold entries highlight the best result for each dataset. (--\%) indicates solver failure.}
\label{tab:benchmarks_real1}
\end{table}

\begin{table}[ht]
\centering
\small
\resizebox{0.8\textwidth}{!}{
\begin{tabular}{l c c c c c c}
\toprule
\multirow{2}{*}{Method} & pm10 & pollen & no & fri & rmftsa & galaxy  \\
 & $(500,7)$ & $(3848,4)$ & $(500,7)$ & $(1000,50)$ & $(508,10)$ & $(323,4)$ \\
\midrule
 *BnB (P) & (23.8\%) & (19.2\%) & (15.0\%) & (11.4\%) & (26.7\%) & (24.3\%) \\
 *BnB (H) & (15.6\%) & \textbf{(16.7\%)} & (4.6\%) & \textbf{(8.8\%)} & (9.4\%) & (14.7\%) \\
 Gurobi (M) & (31.7\%) & (50.2\%) & (30.2\%) & (37.4\%) & (44.7\%) & (51.3\%) \\
 Gurobi (P) & (20.2\%) & (19.6\%) & (10.4\%) & (10.6\%) & (18.7\%) & (21.7\%) \\
 *Gurobi (H) & \textbf{(14.8\%)} & (19.6\%) & \textbf{985.7} & (10.0\%) & \textbf{1987.8} & \textbf{(4.0\%)} \\
 Mosek (M) & (33.2\%) & (49.1\%) & (32.3\%) & (36.2\%) & (46.6\%) & (52.1\%) \\
 Mosek (P) & (27.4\%) & (19.9\%) & (13.5\%) & (11.3\%) & (21.9\%) & (22.6\%) \\
*Mosek (H) & (--\%) & (19.9\%) & (15.0\%) & (10.9\%) & (--\%) & (23.2\%) \\
\bottomrule
\end{tabular}
}
\caption{Solver performance on larger real benchmark datasets with 10 outliers. Methods marked with * are proposed in this work. Dataset dimensions $(n, p)$ are shown below each name. Values in parentheses indicate the optimality gap when the solver fails to reach the 1\% tolerance within the time limit; numeric values indicate solve time in seconds. Bold entries highlight the best result for each dataset. (--\%) indicates solver failure.}
\label{tab:benchmarks_real2}
\end{table}

\begin{table}[ht]
\centering
\small
\resizebox{0.8\textwidth}{!}{
\begin{tabular}{l c c c c c c c}
\toprule
\multirow{2}{*}{Method} & alcohol & education & food & milk & pulpfiber & radar & wagner \\
 & $(44,6)$ & $(50,4)$ & $(150,3)$ & $(86,7)$ & $(62,7)$ & $(1573,4)$ & $(63,6)$ \\
\midrule
 *BnB (P) & 111960 & 106436 & 88882 & 97331 & 99744 & 105795 & 100338 \\
 *BnB (H) & 1161 & 16069 & 616 & 17530 & 919 & 6253 & 12136 \\
 Gurobi (M) & 80743 & 6666649 & 14265644 & 7271224 & 80587 & 1043403 & 481076 \\
 Gurobi (P) & 84197 & 124485 & 63882 & 109642 & 5560 & 684574 & 40401 \\
 *Gurobi (H) & 4665 & 14080 & 578 & 15205 & 7103 & 9715 & 13031 \\
 Mosek (M) & 131507 & 4816500 & 854332 & 2019163 & 105449 & 80660 & 757731 \\
 Mosek (P) & 12767 & 105370 & 59726 & 90755 & 3022 & 1191 & 31376 \\
 *Mosek (H) & 1129695 & -- & -- & 398930 & 582820 & -- & 695613 \\
 Prop. \ref{prop:complexity} & 2471988 & 41752 & 22652 & $\sim 10^{9}$& $\sim 10^{8}$& $\sim 10^{9}$& $\sim 10^{7}$ \\
\bottomrule
\end{tabular}
}
\caption{Number of BnB nodes explored by each solver on smaller real benchmark datasets. ``--'' indicates solver failure. The last row shows the theoretical upper bound from Proposition \ref{prop:complexity}.}
\label{tab:benchmarks_nodes1}
\end{table}

\begin{table}[ht]
\centering
\small
\resizebox{0.8\textwidth}{!}{
\begin{tabular}{l c c c c c c}
\toprule
\multirow{2}{*}{Method} & pm10 & pollen & no & fri & rmftsa & galaxy \\
 & $(500,7)$ & $(3848,4)$ & $(500,7)$ & $(1000,50)$ & $(508,10)$ & $(323,4)$ \\
\midrule
 *BnB (P) & 92461 & 88564 & 96312 & 91594 & 83874 & 108458 \\
 *BnB (H) & 76871 & 70127 & 81740 & 75705 & 75569 & 76574 \\
 Gurobi (M) & 4748094 & 22293 & 5370154 & 365364 & 4143545 & 4857546 \\
 Gurobi (P) & 1525648 & 75180 & 1764024 & 255223 & 1289427 & 2548249 \\
 *Gurobi (H) & 522409 & 19384 & 170739 & 135830 & 276590 & 1674007 \\
 Mosek (M) & 224224 & 25079 & 260771 & 94819 & 254029 & 372766 \\
 Mosek (P) & 9475 & 47 & 20571 & 4377 & 21400 & 51221 \\
 *Mosek (H) & -- & 47 & 21270 & 3913 & -- & 44744 \\
 Prop. \ref{prop:complexity} & $\sim 10^{14}$ & $\sim 10^{10}$ & $\sim 10^{14}$ & $\sim 10^{104}$ & $\sim 10^{19}$ & $\sim 10^{7}$ \\
\bottomrule
\end{tabular}
}
\caption{Number of BnB nodes explored by each solver on larger real benchmark datasets. ``--'' indicates solver failure. The last row shows the theoretical upper bound from Proposition \ref{prop:complexity}.}
\label{tab:benchmarks_nodes2}
\end{table}

We observe that the hyperplane arrangement formulations (H) with Gurobi or the proposed BnB algorithm consistently outperform the Big-M and perspective formulations across all datasets, demonstrating the value of incorporating threshold constraints (similarly to results with synthetic data, hyperplane arrangement formulations with Mosek perform poorly). Neither BnB (H) or Gurobi (H) consistently outperforms the other: from the performance profile in Figure~\ref{fig:perf_profile_real}, BnB (H) seems to be able to prove optimality faster in easier instances (less than 10 seconds), and Gurobi (H) seems to be more effective in instances requiring more than 1,000 seconds.

Tables \ref{tab:benchmarks_nodes1}--\ref{tab:benchmarks_nodes2} reveal that the hyperplane arrangement formulation (H) requires fewer nodes to solve to optimality compared to the perspective formulation (P), but each node takes longer to solve. This is evident from instances where all solvers reach the time limit: the perspective formulation explores more nodes than the strengthened formulation. Comparing Gurobi (H) to Gurobi (P), the hyperplane arrangement formulation explores only 30--60\% as many nodes. In contrast, BnB (H) explores approximately 80\% of the nodes that BnB (P) does. In other words, while the time required to solve the continuous relaxations increases with the hyperplane arrangement considerations, the proposed BnB method is comparatively less impacted, showcasing the benefit of using first order methods to solve the continuous relaxations.

We observe that, in general, Gurobi (H) requires fewer branch-and-bound nodes than BnB (H) to prove optimality, although: \textit{(i)} there are notable exceptions, see for example results with the ``pulpfiber" dataset;  \textit{(ii)} the differences in number of nodes are typically small. In other words, we observe that the proposed BnB method does not seem to be substantially hampered from not solving the constrained problems to optimality.
The last row of each table shows the theoretical upper bound on the number of nodes required for the strengthened formulation from Proposition \ref{prop:complexity}. In practice, both BnB (H) and Gurobi (H) require only a small fraction of these nodes, showing that the methods perform much better in practice than what the worst-case theoretical bound would suggest. 

\begin{figure}[t]
\centering
\includegraphics[width=0.88\textwidth]{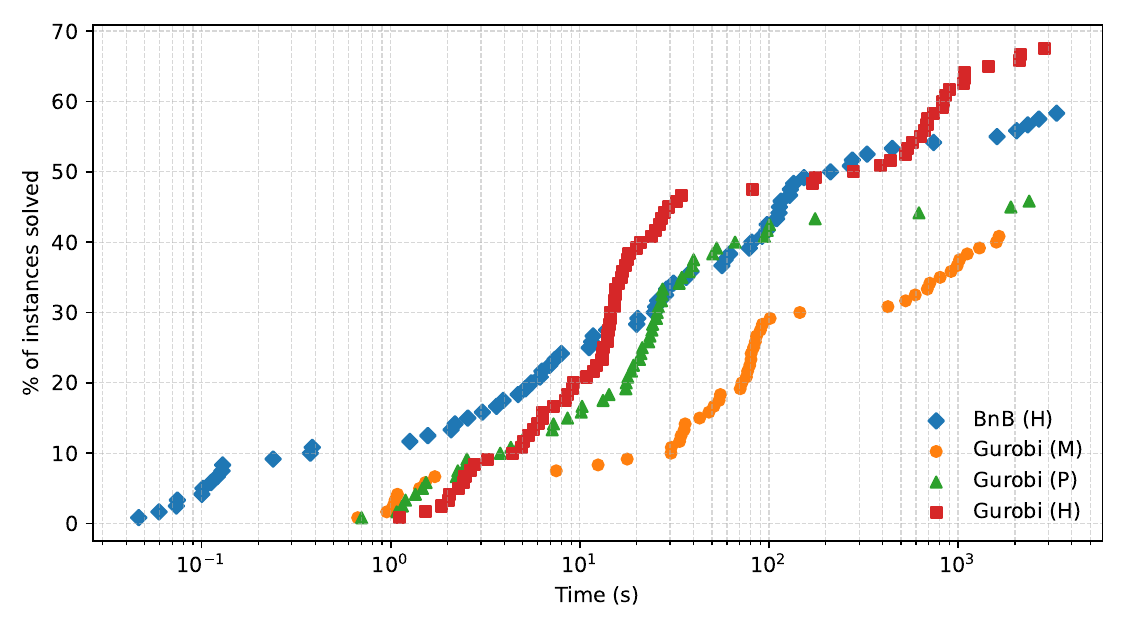}
\caption{Performance profile on the real datasets. For each time $t$, the curve reports the fraction of instances solved within time $t$ (to the 1\% optimality-gap tolerance).}
\label{fig:perf_profile_real}
\end{figure}

Overall, as summarized in Figure~\ref{fig:perf_profile_real}, BnB (H) attains the highest solve rate at short-to-moderate runtimes, while Gurobi (H) remains competitive and becomes advantageous on a subset of harder instances. Both substantially outperform the remaining formulations, confirming that using hyperplane arrangement (H) is the dominant driver of performance on these benchmarks.

\subsection{Value of Optimal Solutions}
We assess the practical benefit of computing optimal solutions compared to heuristic approaches. For each real dataset, we vary the number of outliers from 5 to 20 and compare the objective values obtained by the popularly used alternating minimization heuristic (Algorithm \ref{alg:incumbent}) against the solutions computed by the BnB solver. Figure \ref{fig:quality} presents the relative suboptimality gap $(f_{\text{heuristic}} - f_{\text{optimal}}) / f_{\text{optimal}}$ as a box plot.

\begin{figure}[!h]
    \centering
    \includegraphics[width=\linewidth]{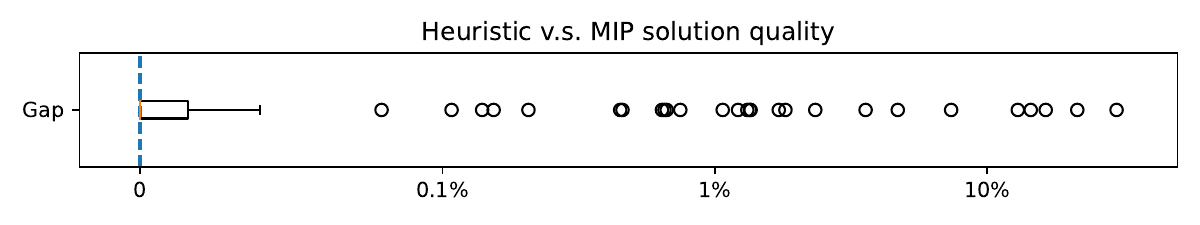}
    \caption{Box plot of relative suboptimality of heuristic solutions (Algorithm \ref{alg:incumbent}) compared to optimal solutions from BnB (H) on 13 real datasets, with outlier counts from 5 to 20.}
    \label{fig:quality}
\end{figure}

The figure demonstrates the value of exact optimization. By construction, the optimal method always achieves an objective value at least as good as the heuristic (since the heuristic is called at the root node). In practice, the optimal method achieves a strictly better objective in more than 50\% of cases. Furthermore, the gap exceeds 1\% in more than 10\% of cases, with the largest gap reaching approximately 30\%. These findings indicate that while heuristics provide reasonable solutions in many cases, exact methods can yield substantially better solutions for a significant fraction of problem instances.

%% file: Sections/appendix.tex
\newpage

\section{Additional Technical Details}\label{sect:app-technical}

\subsection{Proof of Proposition~\ref{prop:hull}}\label{sec:proofPropHull}

In this section we derive the convex hull of set $Z_{\text{HA}}(b)$, which we repeat for convenience. 
$$Z_{\text{HA}}=\left\{(z,w,r,t)\in \{0,1\}\times \R^3: t\geq w^2,\; w(1-z)=0,\; |r|(1-z)\leq b(1-z),\; |r|z\geq bz,\; w=rz \right\};$$
we omit the explicit dependence on parameter $b\in \R_+$ for convenience, as it is fixed for the purposes of this section.

To prove the result, we use disjunctive programming \citep{ceria1999convex}. Indeed, set $Z_{\text{HA}}=Z_-\cup Z_+\cup Z_=$ where
\begin{align*}
Z_-&\defeq\left\{(z,w,r,t)\in \R^4: t\geq w^2, z=1,r\leq -b, w=r \right\}\\
Z_+&\defeq\left\{(z,w,r,t)\in \R^4: t\geq w^2, z=1,r\geq b, w=r \right\}\\
Z_=&\defeq\left\{(z,w,r,t)\in \R^4: t\geq w^2, z=0,|r|\leq b, w=0 \right\}.
\end{align*}
Using standard disjunctive programming reformulations, we find that $(\bar z,\bar w,\bar r,\bar t)\in \text{cl conv}(Z_{\text{HA}})$ if and only if there exists additional variables $(z_-,w_-,r_-,t_-,\alpha_-)$, $(z_+,w_+,r_+,t_+,\alpha_+)$ and $(z_=,w_=,r_=,t_=,\alpha_=)$ such that the system 
\begin{align*}
&\alpha_-+\alpha_++\alpha_==1,\; \alpha_-\geq 0,\; \alpha_+\geq 0,\; \alpha_=\geq 0\\
&\bar z=z_-+z_++z_=,\; \bar w=w_-+w_++w_=,\; \bar r=r_-+r_++r_=,\; \bar t=t_-+t_++t_=\\
&t_-\alpha_-\geq w_-^2,\; z_-=\alpha_-,\; r_-\leq -b\alpha_-,\; w_-=r_-\\
&t_+\alpha_+\geq w_+^2,\; z_+=\alpha_+,\; r_+\geq b\alpha_+,\; w_+=r_+\\
&t_=\alpha_=\geq w_=^2,\; z_==0,\; |r_=|\leq b\alpha_=,\; w_==0.
\end{align*}
To obtain the result, we project out the additional variables. First, we remove auxiliary variables $(\bm{z},\bm{w})$ by using the equality constraints, resulting in the simplified system
\begin{align*}
&\alpha_-+\alpha_++\alpha_==1,\; \alpha_-\geq 0,\; \alpha_+\geq 0,\; \alpha_=\geq 0\\
&\bar z=\alpha_-+\alpha_+,\; \bar w=r_-+r_+,\; \bar r=r_-+r_++r_=,\; \bar t=t_-+t_++t_=\\
&t_-\alpha_-\geq r_-^2,\; r_-\leq -b\alpha_-\\
&t_+\alpha_+\geq r_+^2,\; r_+\geq b\alpha_+\\
&t_=\geq 0,\; |r_=|\leq b\alpha_=.
\end{align*}
Next we can project out auxiliary variables $\bm{t}$, leading inequality $\bar t\geq r_-^2/\alpha_-+r_+^2/\alpha_+$, and auxiliary variable $\alpha_=$ using the first equality constraint. The simplified system reads
\begin{align*}
&\alpha_-\geq 0,\; \alpha_+\geq 0,\; \alpha_-+\alpha_+\leq 1\\
&\bar z=\alpha_-+\alpha_+,\; \bar w=r_-+r_+,\; \bar r=r_-+r_++r_=\\
&\bar t\geq r_-^2/\alpha_-+r_+^2/\alpha_+\\
&r_-\leq -b\alpha_-,\; r_+\geq b\alpha_+,\; |r_=|\leq b(1-\alpha_--\alpha_+).
\end{align*}
Finally, we project out $r_=$ since $\bar r=r_-+r_++r_=\Leftrightarrow \bar r=\bar w+r_=$, and thus $r_=$ satisfying constraints exists if and only $|\bar r-\bar w|\leq b(1-\alpha_--\alpha_+)$. The result of Proposition~\ref{prop:hull} then follows by renaming variables as $\alpha_-\leftrightarrow z_-$, $\alpha_+\leftrightarrow z_+$, $r_-\leftrightarrow -w^-$ and $r_+\leftrightarrow w^+$.

\subsection{Proof of Proposition \ref{prop:perspectiveLoss} and Proposition \ref{prop:perspectiveLossSimple} }\label{app:h_closed_form}

We first establish a key lemma about the structure of optimal solutions of the relaxation problem.
\begin{lemma}\label{lem:complementarity}
Any optimal solution of \eqref{eq:LTS_perspHA_relax} with any given $\bm{\ell^-},\, \bm{\ell^+},\, \bm{u^-}, \bm{u^+}$ satisfies $w_i^+ w_i^- = 0$ and $z_i^+ z_i^- = 0$ for all $i \in [n]$.
\end{lemma}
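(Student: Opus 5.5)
The plan is an exchange argument. I would exhibit, for any feasible point violating complementarity, another feasible point with strictly smaller objective. For fixed $\bm{\beta}$, the constraints \eqref{eq:LTS_perspHA_linear}--\eqref{eq:LTS_perspHA_equalities} and the bounds $\bm{\ell^\pm}\leq \bm{z^\pm}\leq \bm{u^\pm}$ decouple across $i$. The objective couples different $i$ only through $\bm{w}=\bm{w^+}-\bm{w^-}$ (in the term $-\bm{y}^\top(\bm{X\beta}-\bm{w})$ and the $\widetilde{\bm{\Sigma}}_{\bm{d}}$ quadratic). So it suffices to modify a single index $i$ while keeping $w_i=w_i^+-w_i^-$ and $\bm{\beta}$ unchanged. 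Then only the separable terms $\mu(z_i^-+z_i^+)+d_i(w_i^+)^2/z_i^++d_i(w_i^-)^2/z_i^-$ change.

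First I reduce the two claims to one. By the convention $x/0=\infty$ for $x>0$, finiteness of the objective gives $w_i^\pm>0\Rightarrow z_i^\pm>0$. Conversely, \eqref{eq:LTS_perspHA_linear} gives $z_i^\pm>0\Rightarrow w_i^\pm\geq\sqrt{2\mu}z_i^\pm>0$ (taking $\mu>0$). Hence $w_i^+w_i^->0$ iff $z_i^+z_i^->0$, and it suffices to rule out $z_i^+,z_i^->0$ with $w_i^+,w_i^->0$ at an optimum. Suppose this happens and, by symmetry, $w_i\defeq w_i^+-w_i^-\geq 0$. I would replace the four variables by
$$w_i^{+\prime}=w_i,\qquad w_i^{-\prime}=0,\qquad z_i^{-\prime}=0,\qquad z_i^{+\prime}=z_i^+\,\frac{w_i}{w_i^+}.$$
This choice keeps the ratio $w_i^{+\prime}/z_i^{+\prime}=w_i^+/z_i^+$ (when $w_i>0$). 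If $w_i=0$, everything is set to zero.

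Next I verify feasibility. Linear constraint \eqref{eq:LTS_perspHA_linear} holds because the ratio is preserved and the minus side is zero. The total outlier mass drops: $z_i^{+\prime}\leq z_i^+$ and $z_i^{-\prime}=0$. Hence $1-z_i^{-\prime}-z_i^{+\prime}$ increases while $w_i$ is unchanged, so \eqref{eq:LTS_perspHA_abs} still holds, as does $z_i\le 1$.

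The step I expect to be the only delicate one is the bounds $\ell_i^\pm\leq z_i^{\pm\prime}\leq u_i^\pm$. Upper bounds are fine since both variables weakly decreased. For lower bounds, the bounds are binary. If $\ell_i^-=1$, then $z_i^-=1$ forces $z_i^+=0$, contradicting the assumption. If $\ell_i^+=1$, then $z_i^+=1$ forces $z_i^-=0$, again a contradiction. So in the case at hand $\ell_i^\pm=0$ and the new point respects the bounds.

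Finally I compare objective values. The perspective term becomes
$$d_iw_i^2/z_i^{+\prime}=d_iw_iw_i^+/z_i^+\leq d_i(w_i^+)^2/z_i^+,$$
and the dropped term $d_i(w_i^-)^2/z_i^-$ is positive. The penalty $\mu(z_i^-+z_i^+)$ decreases by at least $\mu z_i^->0$, and all coupled terms are unchanged. This gives a strict decrease in the objective, contradicting optimality. Therefore $z_i^+z_i^-=0$, and hence $w_i^+w_i^-=0$, for all $i\in[n]$.
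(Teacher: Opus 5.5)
Your proof is correct and uses essentially the same exchange argument as the paper: modify only the $i$-th block so that $w_i=w_i^+-w_i^-$ (and hence every coupled term) is unchanged, while $\mu z_i$ and the perspective terms strictly decrease. The paper instead perturbs to $(\alpha z_i^+,\beta z_i^-,\alpha w_i^+,\beta w_i^-)$ with $\alpha,\beta\in(0,1)$ and $\alpha w_i^+-\beta w_i^-=w_i^+-w_i^-$, and splits into three cases on the bounds; you move directly to a complementary point and observe that a violation already forces $\ell_i^\pm=0$, which covers all bound configurations in one pass.
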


\proof{Proof.} Given an index $i$, we discuss the following three cases.

\noindent\textbf{Case 1: $u^-_i=u^+_i=0$.} This implies $z_i^+ = z_i^- = 0$. The perspective terms $\frac{d_i(w_i^+)^2}{z_i^+}$ and $\frac{d_i(w_i^-)^2}{z_i^-}$ then force $w_i^+ = w_i^- = 0$. Thus $w_i^+ w_i^- = z_i^+ z_i^- = 0$.

\noindent\textbf{Case 2: $\ell^-_i=1$ or $\ell^+_i=1$.} This implies $z_i^+ + z_i^- = 1$. Constraint \eqref{eq:LTS_perspHA_abs} becomes $|r_i - w_i| \leq 0$, which forces $w_i = r_i$. For a fixed residual $r_i$, we determine the optimal values of the auxiliary variables.

If $\ell^+_i=1$ (i.e., $w_i \geq 0$), then $r_i = w_i \geq 0$. If the problem is feasible, the objective is minimized by setting $z_i^+ = 1$, $z_i^- = 0$, $w_i^+ = r_i$, and $w_i^- = 0$, which satisfies $w_i = w_i^+ - w_i^- = r_i$ and minimizes the perspective term.

If $\ell^-_i=1$ (i.e., $w_i \leq 0$), then $r_i = w_i \leq 0$. If the problem is feasible, the objective is minimized by setting $z_i^+ = 0$, $z_i^- = 1$, $w_i^+ = 0$, and $w_i^- = -r_i$, which satisfies $w_i = w_i^+ - w_i^- = r_i$ and minimizes the perspective term.

In both cases, $w_i^+ w_i^- = z_i^+ z_i^- = 0$.

\noindent\textbf{Case 3: $\ell^-_i=\ell^+_i=0$, $u^-_i=u^+_i=1$.} We prove by contradiction. Suppose at an optimal solution, both $z_i^+ > 0$ and $z_i^- > 0$ for some $i$. By constraint \eqref{eq:LTS_perspHA_linear}, we have $w_i^+ \geq \sqrt{2\mu} z_i^+ > 0$ and $w_i^- \geq \sqrt{2\mu} z_i^- > 0$.

Consider a perturbation: replace $(z_i^+, z_i^-, w_i^+, w_i^-)$ by $(\alpha z_i^+, \beta z_i^-, \alpha w_i^+, \beta w_i^-)$ for some $0 < \alpha, \beta < 1$ chosen such that
\begin{equation*}
\alpha w_i^+ - \beta w_i^- = w_i^+ - w_i^-.
\end{equation*}
This equation can be rewritten as $w_i^+(1 - \alpha) = w_i^-(1 - \beta)$. Since $w_i^+, w_i^- > 0$, we can choose $\alpha, \beta \in (0,1)$ satisfying this constraint.

We verify that all constraints remain satisfied:
\begin{itemize}
\item Constraint \eqref{eq:LTS_perspHA_linear}: $\alpha w_i^+ \geq \sqrt{2\mu} \alpha z_i^+$ and $\beta w_i^- \geq \sqrt{2\mu} \beta z_i^-$ hold since the original constraints hold.
\item Constraint \eqref{eq:LTS_perspHA_abs}: The left-hand side $|r_i - w_i| = |r_i - (w_i^+ - w_i^-)|$ is unchanged since $\alpha w_i^+ - \beta w_i^- = w_i^+ - w_i^-$. The right-hand side $\sqrt{2\mu}(1 - z_i)$ increases since the new $z_i = \alpha z_i^+ + \beta z_i^- < z_i^+ + z_i^-$. Thus the constraint remains satisfied.
\item The new $z_i = \alpha z_i^+ + \beta z_i^- < z_i^+ + z_i^-$, so the constraint $z_i \in [0,1]$ remains satisfied.
\end{itemize}

The objective changes only in the perspective terms. The new contribution is
\begin{equation*}
\frac{d_i (\alpha w_i^+)^2}{\alpha z_i^+} + \frac{d_i (\beta w_i^-)^2}{\beta z_i^-} = \alpha \frac{d_i (w_i^+)^2}{z_i^+} + \beta \frac{d_i (w_i^-)^2}{z_i^-} < \frac{d_i (w_i^+)^2}{z_i^+} + \frac{d_i (w_i^-)^2}{z_i^-},
\end{equation*}
since $\alpha, \beta < 1$. This contradicts the optimality of the original solution. Therefore, at any optimal solution, $z_i^+ z_i^- = 0$. By constraint \eqref{eq:LTS_perspHA_linear}, this implies $w_i^+ w_i^- = 0$.
\Halmos \endproof

Lemma \ref{lem:complementarity} allows us to project out the auxiliary variables $z_i^+, z_i^-, w_i^+, w_i^-$ by writing
\begin{equation*}
\frac{d_i (w_i^+)^2}{z_i^+} + \frac{d_i (w_i^-)^2}{z_i^-} = \frac{d_i w_i^2}{z_i},
\end{equation*}
where $z_i = z_i^+ + z_i^-$ and $w_i = w_i^+ - w_i^-$. Note that $\ell^-_i=1$ implies $z^-_i=1$, $z^+_i=0$ and therefore $w^+_i=0$, $w\le 0$. Similarly, $\ell^+_i=1$ implies $w\ge 0$.
We can rewrite  \eqref{eq:phiGen} as 
{\small\begin{subequations}\label{eq:phiGen2}
\begin{align}
\phi(r;\ell^-,\ell^+,u^-,u^+,d)=\min_{w \in \mathbb{R}, z \in [0,1]}& \; \frac{1}{2}(w - r)^2 + \mu z + d \left( \frac{w^2}{z} - w^2 \right)\\
\text{s.t.}\,\,\,\,\,&|w| \geq \sqrt{2\mu} z,\,\,\quad |r - w| \leq \sqrt{2\mu} (1 - z) \label{eq:phiGen2-cons}\\
& z\le u^++u^-,\,\,w\ge0 \text{ if }\ell^+=1,\,\,\,w\le0 \text{ if }\ell^-=1.
\end{align}
\end{subequations}}

We now discuss the following four Cases, which gives us the closed form expression of $\phi$ in Proposition \ref{prop:perspectiveLoss} and Proposition \ref{prop:perspectiveLossSimple}.

\paragraph{\textbf{Case 1: $u^-=u^+=0$.}} We have $z = 0$. The constraint $|r - w| \leq \sqrt{2\mu}$ combined with $w = 0$ (forced by the perspective term) yields $|r| \leq \sqrt{2\mu}$, which gives $$\phi(r;0,0,0,0,d)=\frac{1}{2}r^2+\delta_{[-\sqrt{2\mu},\sqrt{2\mu}]}(r).$$

\paragraph{\textbf{Case 2: $l^+=1$.}} we have $z = 1$ and $w \geq 0$. The constraint $|r - w| \leq 0$ forces $w = r$. Combined with $|w| \geq \sqrt{2\mu}$, we obtain $r \geq \sqrt{2\mu}$, , which gives $$\phi(r;0,1,0,1,d)=\mu+\delta_{[\sqrt{2\mu},\infty]}(r).$$

\paragraph{\textbf{Case 3: $l^-=1$.}} We have $z = 1$ and $w \leq 0$. Similarly, $w = r$ and $w \leq -\sqrt{2\mu}$ yield $r \leq -\sqrt{2\mu}$, which gives $$\phi(r;1,0,1,0,d)=\mu+\delta_{[-\infty,-\sqrt{2\mu}]}(r).$$

\paragraph{\textbf{Case 4: $\ell^-=\ell^+=0$, $u^-=u^+=1$.}}
We first ignore the constraints in \eqref{eq:phiGen2-cons} and derive the closed-form solution for the optimization problem
\begin{equation}\label{eq:phi_opt}
\phi(r) = \min_{w \in \mathbb{R}, z \in [0,1]} \; \frac{1}{2}(w - r)^2 + \mu z + d \left( \frac{w^2}{z} - w^2 \right),
\end{equation}
where $\mu > 0$ and $0 < d < 1/2$. 

\noindent\textbf{Step 1: Minimization over $w$ for fixed $z \in (0,1]$.} 
For any $z > 0$, the objective is strictly convex in $w$. The coefficient of $w^2$ is
\begin{equation*}
a(z) := \frac{1}{2} + d\left(\frac{1}{z} - 1\right) = \frac{1}{2} - d + \frac{d}{z} > 0.
\end{equation*}
The unique minimizer is
\begin{equation}\label{eq:wstar_z}
w^*(z) = \frac{r}{1 + 2d\left(\frac{1}{z} - 1\right)} = \frac{r}{1 - 2d + \frac{2d}{z}}.
\end{equation}
Substituting back and completing the square, the minimal value over $w$ is
\begin{equation}\label{eq:g_z}
g(z) := \min_w \phi(w; z) = \mu z + \frac{1}{2} r^2 - \frac{r^2}{2\left(1 - 2d + \frac{2d}{z}\right)}.
\end{equation}
When $z = 0$, feasibility forces $w = 0$, yielding $g(0) = \frac{1}{2} r^2$. At $z = 1$, we have $w^*(1) = r$ and $g(1) = \mu$. The original problem thus reduces to
\begin{equation*}
\phi(r) = \min_{z \in [0,1]} g(z).
\end{equation*}

\noindent\textbf{Step 2: Stationarity in $z$ and the interior solution.}
Define $D(z) := 1 - 2d + \frac{2d}{z} = \frac{(1-2d)z + 2d}{z}$ for $z \in (0,1]$. Differentiating \eqref{eq:g_z} gives
\begin{equation*}
g'(z) = \mu - \frac{r^2 d}{z^2 D(z)^2}.
\end{equation*}
Setting $g'(z) = 0$ and noting that $z^2 D(z)^2 = N(z)^2$ where $N(z) := (1-2d)z + 2d$, we obtain
\begin{equation*}
N(z) = |r| \sqrt{\frac{d}{\mu}}.
\end{equation*}
Solving for $z$ yields the candidate interior minimizer
\begin{equation}\label{eq:zstar_interior}
z^*(r) = \frac{|r| \sqrt{\frac{d}{\mu}} - 2d}{1 - 2d}.
\end{equation}
This is feasible (i.e., $0 \leq z^* \leq 1$) precisely when $2\sqrt{\mu d} \leq |r| \leq \sqrt{\mu/d}$.

\noindent\textbf{Step 3: Evaluation at the optimizer and boundary regimes.}
When $2\sqrt{\mu d} \leq |r| \leq \sqrt{\mu/d}$, we have $z^*(r) \in (0,1)$ and $g'(z^*) = 0$. Using $N(z^*) = |r| \sqrt{d/\mu}$ and $D(z^*) = N(z^*)/z^*$ in \eqref{eq:g_z}:
\begin{equation*}
\phi(r) = g(z^*) = \mu z^* + \frac{1}{2} r^2 - \frac{r^2 z^*}{2 N(z^*)}.
\end{equation*}
Substituting $z^*$ from \eqref{eq:zstar_interior} and simplifying yields
\begin{equation*}
\phi(r) = \frac{-d r^2 + 2\sqrt{\mu d} |r| - 2\mu d}{1 - 2d}.
\end{equation*}
The optimal $w^*$ in this regime is obtained from \eqref{eq:wstar_z}:
\begin{equation*}
w^*(r) = \frac{r}{D(z^*)} = \frac{r z^*}{N(z^*)} = \text{sign}(r) \sqrt{\frac{\mu}{d}} z^*(r).
\end{equation*}

When $|r| < 2\sqrt{\mu d}$, the stationary $z^*$ in \eqref{eq:zstar_interior} is negative and infeasible. Since $g$ is convex on $[0,1]$, the minimum occurs at $z = 0$, which forces $w = 0$. Thus $\phi(r) = \frac{1}{2} r^2$.

When $|r| > \sqrt{\mu/d}$, we have $z^* > 1$, which is infeasible. The minimum occurs at $z = 1$, where $w^*(1) = r$ and $\phi(r) = \mu$.

\noindent\textbf{Summary.}
Combining the three cases:
\begin{equation*}
\phi(r) = \begin{cases}
\dfrac{1}{2} r^2, & |r| \leq 2\sqrt{\mu d}, \\[8pt]
\dfrac{-d r^2 + 2\sqrt{\mu d} |r| - 2\mu d}{1 - 2d}, & 2\sqrt{\mu d} \leq |r| \leq \sqrt{\dfrac{\mu}{d}}, \\[8pt]
\mu, & |r| \geq \sqrt{\dfrac{\mu}{d}},
\end{cases}
\end{equation*}
with optimal $(w^*, z^*)$ given by
\begin{equation*}
z^*(r) = \begin{cases}
0, & |r| \leq 2\sqrt{\mu d}, \\[3pt]
\dfrac{|r| \sqrt{\frac{d}{\mu}} - 2d}{1 - 2d}, & 2\sqrt{\mu d} \leq |r| \leq \sqrt{\dfrac{\mu}{d}}, \\[10pt]
1, & |r| \geq \sqrt{\dfrac{\mu}{d}},
\end{cases}
\end{equation*}
\begin{equation*}
w^*(r) = \begin{cases}
0, & |r| \leq 2\sqrt{\mu d}, \\[3pt]
\text{sign}(r) \sqrt{\dfrac{\mu}{d}} z^*(r), & 2\sqrt{\mu d} \leq |r| \leq \sqrt{\dfrac{\mu}{d}}, \\[6pt]
r, & |r| \geq \sqrt{\dfrac{\mu}{d}}.
\end{cases}
\end{equation*}

\noindent\textbf{Derivative of $\phi(r)$.}
Differentiating each piece yields:
\begin{equation*}
\phi'(r) = \begin{cases}
r, & |r| < 2\sqrt{\mu d}, \\[6pt]
\dfrac{-2d r + 2\sqrt{\mu d} \, \text{sign}(r)}{1 - 2d}, & 2\sqrt{\mu d} < |r| < \sqrt{\dfrac{\mu}{d}}, \\[8pt]
0, & |r| > \sqrt{\dfrac{\mu}{d}}.
\end{cases}
\end{equation*}
At the transition points, continuity can be verified directly. At $|r| = 2\sqrt{\mu d}$: the left derivative is $\text{sign}(r) \cdot 2\sqrt{\mu d}$, and the right derivative is $\frac{-2d \cdot 2\sqrt{\mu d} \, \text{sign}(r) + 2\sqrt{\mu d} \, \text{sign}(r)}{1-2d} = \text{sign}(r) \cdot 2\sqrt{\mu d}$. At $|r| = \sqrt{\mu/d}$: the left derivative is $\frac{-2d \sqrt{\mu/d} \, \text{sign}(r) + 2\sqrt{\mu d} \, \text{sign}(r)}{1-2d} = 0$. Thus $\phi'(r)$ is continuous everywhere.

\noindent\textbf{Checking feasibility in \eqref{eq:phiGen2-cons}.}
\begin{itemize}
\item When $|r| \leq 2\sqrt{\mu d}$: We have $z^* = w^* = 0$. Both constraints reduce to $0 \geq 0$ and $|r| \leq \sqrt{2\mu}$. The latter holds since $|r| \leq 2\sqrt{\mu d} \leq \sqrt{2\mu}$ (using $d \leq 1/2$).
\item When $2\sqrt{\mu d} \leq |r| \leq \sqrt{\mu/d}$: We have $w^* = \text{sign}(r) \sqrt{\mu/d} z^*$. The first constraint becomes $\sqrt{\mu/d} z^* \geq \sqrt{2\mu} z^*$, which holds since $d \leq 1/2$. For the second constraint, we compute $|r - w^*| = |r| - |w^*| = |r| - \sqrt{\mu/d} z^*$. Substituting $z^*$ and simplifying shows that this equals $\sqrt{2\mu}(1 - z^*)$.
\item When $|r| \geq \sqrt{\mu/d}$: We have $z^* = 1$ and $w^* = r$. The constraints become $|r| \geq \sqrt{2\mu}$ and $|r - r| = 0 \leq 0$, both of which hold.
\end{itemize}

Since the optimal $(w^*, z^*)$ satisfies constraints \eqref{eq:phiGen2-cons}, they are also the optimal solutions to $\phi(r;0,0,1,1,d)$. Moreover, the constraints \eqref{eq:phiGen_opt} can be removed.

\subsection{Proof of Proposition \ref{thm:dual_bound}}\label{app:dual_bound_proof}

We first establish a preliminary lemma.
\begin{lemma}[Gradient-suboptimality inequality]\label{lem:grad_gap}
If $\Phi: \mathbb{R}^p \to \mathbb{R}$ is $\tilde{\lambda}$-strongly convex and differentiable, then for any $\bm{x} \in \mathbb{R}^p$,
\begin{equation*}
\min_{\bm{u} \in \mathbb{R}^p} \Phi(\bm{u}) \geq \Phi(\bm{x}) - \frac{1}{2\tilde{\lambda}} \|\nabla \Phi(\bm{x})\|_2^2.
\end{equation*}
\end{lemma}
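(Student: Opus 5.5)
The statement is the standard gradient bound for strongly convex functions, so I will derive it from the strong-convexity lower model. The argument minimizes a quadratic minorant of $\Phi$ in closed form.

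First, I will use the first-order characterization of $\tilde\lambda$-strong convexity for differentiable functions. For all $\bm{x},\bm{u}\in\R^p$,
\begin{equation*}
\Phi(\bm{u}) \geq \Phi(\bm{x}) + \nabla\Phi(\bm{x})^\top(\bm{u}-\bm{x}) + \frac{\tilde\lambda}{2}\|\bm{u}-\bm{x}\|_2^2 .
\end{equation*}
If one takes the definition of strong convexity to be that $\Phi(\cdot)-\frac{\tilde\lambda}{2}\|\cdot\|_2^2$ is convex, this inequality follows from the gradient inequality for convex differentiable functions applied to that difference. The step is to expand the squared norms and cancel terms.

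Second, I will fix $\bm{x}$ and minimize the right-hand side over $\bm{u}$. It is a strongly convex quadratic in $\bm{u}$, minimized at $\bm{u}=\bm{x}-\nabla\Phi(\bm{x})/\tilde\lambda$, where its value is $\Phi(\bm{x})-\frac{1}{2\tilde\lambda}\|\nabla\Phi(\bm{x})\|_2^2$. Equivalently, completing the square shows that the right-hand side is at least this quantity for every $\bm{u}$.

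Third, since $\Phi(\bm{u})$ is bounded below by this quantity for every $\bm{u}$, taking the infimum over $\bm{u}$ gives the claim. The minimum exists because a strongly convex continuous function is coercive, though the inequality holds for the infimum regardless.

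There is no real obstacle here. The only point needing care is stating which definition of strong convexity is in force and, if necessary, deriving the first-order inequality from it as above. This lemma then feeds Proposition~\ref{thm:dual_bound} once $L_\rho(\cdot,\bm{\bar\nu})$ is shown to be $\tilde\lambda$-strongly convex via the choice \eqref{eq:minEigenvalue}.
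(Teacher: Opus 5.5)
Your proposal is correct and follows essentially the same route as the paper: both apply the first-order strong-convexity lower bound and minimize the resulting quadratic minorant at $\bm{u}=\bm{x}-\tilde{\lambda}^{-1}\nabla\Phi(\bm{x})$. Your remarks on which definition of strong convexity is used and on the existence of the minimum are harmless additions to an otherwise identical argument.
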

\proof{Proof.}
Strong convexity yields for all $\bm{u}$:
$\Phi(\bm{u}) \geq \Phi(\bm{x}) + \langle \nabla \Phi(\bm{x}), \bm{u} - \bm{x} \rangle + \frac{\tilde{\lambda}}{2} \|\bm{u} - \bm{x}\|_2^2$.
The right side is minimized in $\bm{u}$ at $\bm{u} = \bm{x} - \tilde{\lambda}^{-1} \nabla \Phi(\bm{x})$, giving the result.
\Halmos \endproof
As mentioned in Section~\ref{sec:persp}, the perspective relaxation \eqref{eq:perspProjected} is designed to be $\tilde{\lambda}$-strongly convex. Since the augmented Lagrangian \eqref{eq:Lagnode} is obtained by removing some constraints in \eqref{eq:perspProjected} and adding some piecewise-linear/quadratic functions, it is also $\tilde{\lambda}$-strongly convex. Therefore,
\begin{equation*}
    \bar\zeta(\bm{\bar\nu})=\min_{\bm{\beta}}L_\rho(\bm{\beta;\bm{\bar\nu}}) \ge  L_\rho({\bm{\bar\beta}}, \bm{\bar\nu}) - \frac{\|\nabla_{\bm{\beta}} L_\rho({\bm{\bar\beta}}, \bm{\bar\nu})\|_2^2}{2\tilde{\lambda}}
\end{equation*}
where the right-hand side provides a lower bound of the relaxation problem.
\Halmos \endproof

\subsection{Computing Incumbent Solutions}\label{subsect:incumbent}

We obtain incumbent solutions in the BnB procedure by solving an $\ell_2$-regularized estimation problem with the outlier set $\mathcal{S} \subset [n]$ fixed. Specifically, for a given support $\mathcal{S}$, we solve the original problem \eqref{eq:LTS} with $z_i = 1$ for $i \in \mathcal{S}$ and $z_i = 0$ otherwise:
\begin{equation}\label{eq:upper}
F^*(\mathcal{S}) := \min_{\bm{\beta} \in \mathbb{R}^p} \; \frac{1}{2} \sum_{i \notin \mathcal{S}} (y_i - \bm{x}_i^\top \bm{\beta})^2 + \frac{\lambda}{2} \|\bm{\beta}\|_2^2 + \mu |\mathcal{S}|.
\end{equation}
This is a weighted ridge regression problem with a closed-form solution. Let $\bm{W} = \text{Diag}(1 - \bm{z})$ be the diagonal weight matrix. The optimal coefficient vector is
\begin{equation}\label{eq:upper_beta}
\bm{\beta}^*(\mathcal{S}) = (\bm{X}^\top \bm{W} \bm{X} + \lambda \bm{I})^{-1} \bm{X}^\top \bm{W} \bm{y}.
\end{equation}

Before constructing the BnB tree, following \citep{rousseeuw2006computing}, we compute an initial incumbent solution using a heuristic based on alternating minimization. The method alternates between optimizing over $\bm{\beta}$ for fixed $\bm{z}$ and optimizing over $\bm{z}$ for fixed $\bm{\beta}$. Starting with $\bm{z} = \bm{0}$ (i.e., no outliers), we iterate as follows:
\begin{enumerate}
\item \textbf{Solve for $\bm{\beta}$}: Given the current $\bm{z}$, compute $\bm{\beta}$ using \eqref{eq:upper_beta}.
\item \textbf{Update $\bm{z}$}: Given the current $\bm{\beta}$, compute residuals $r_i = y_i - \bm{x}_i^\top \bm{\beta}$ and update each $z_i$ by comparing the squared loss to the outlier penalty:
\begin{equation}\label{eq:z_update}
z_i = \begin{cases}
1, & \text{if } \frac{1}{2} r_i^2 > \mu, \\
0, & \text{otherwise}.
\end{cases}
\end{equation}
\end{enumerate}
The update rule \eqref{eq:z_update} follows from the optimality condition: for fixed $\bm{\beta}$, observation $i$ should be an outlier if and only if the penalty $\mu$ is less than the squared loss $\frac{1}{2} r_i^2$. This alternating procedure continues until the outlier set $\mathcal{S} = \{i : z_i = 1\}$ stabilizes, which is guaranteed since the objective decreases monotonically. Algorithm \ref{alg:incumbent} summarizes the method.

\begin{algorithm}[!h]
\begin{algorithmic}[1]
\REQUIRE Data $(\bm{X}, \bm{y})$, parameters $\lambda, \mu$
\STATE Initialize $\bm{z} = \bm{0}$
\REPEAT
    \STATE Compute $\bm{\beta} = (\bm{X}^\top \bm{W} \bm{X} + \lambda \bm{I})^{-1} \bm{X}^\top \bm{W} \bm{y}$ where $\bm{W} = \text{Diag}(1 - \bm{z})$
    \STATE Compute residuals $r_i = y_i - \bm{x}_i^\top \bm{\beta}$ for all $i \in [n]$
    \STATE Update $z_i = \mathbf{1}\{\frac{1}{2} r_i^2 > \mu\}$ for all $i \in [n]$
\UNTIL{the outlier set $\mathcal{S} = \{i : z_i = 1\}$ does not change}
\RETURN $\bm{\beta}$, outlier set $\mathcal{S}$
\end{algorithmic}
\caption{Computing initial incumbent via alternating minimization}
\label{alg:incumbent}
\end{algorithm}

At each node in the BnB tree, we further refine the upper bound by solving problem \eqref{eq:upper} with the support set determined by rounding the relaxed integer variables: $\mathcal{S} = \{i : z_i^-+z_i^+ \geq 0.5\}$.
Note that variables $z_i^-+z_i^+$ are only implicitly computed, and this is equivalent to choosing
\begin{equation*}
    \mathcal{S} = \left\{i : |y_i-\bm{x_i^\top\beta}| \geq  \Bigl(\tfrac12 + d\Bigr)\sqrt{\frac{\mu}{d}}\right\} \,\,\,\text{ or }\,\,\, \mathcal{S} = \left\{i : \phi(y_i-\bm{x_i^\top\beta}) \geq  \frac{\mu\left(\tfrac34-d-d^2\right)}{1-2d}\right\}.
\end{equation*}